\DeclareMathOperator{\Vol}{Vol}
\DeclareMathOperator{\dvol}{dvol}
\newcommand{\oX}{\overline{X}}
\newcommand{\of}{\overline{f}}
\newcommand{\og}{\overline{g}}
\newcommand{\orr}{\overline{r}}
\newcommand{\oDelta}{\overline{\Delta}}
\newcommand{\onabla}{\overline{\nabla}}
\newcommand{\cgamma}{\widetilde{\gamma}}
\newcommand{\hf}{\widehat{f}}
\newcommand{\hy}{\widehat{y}}
\newcommand{\hg}{\widehat{g}}
\newcommand{\hB}{\widehat{B}}
\newcommand{\hL}{\widehat{L}}
\newcommand{\hmB}{\widehat{\mathcal{B}}}
\newcommand{\hDelta}{\widehat{\Delta}}
\newcommand{\hphi}{\widehat{\phi}}
\newcommand{\lp}{\langle}
\newcommand{\rp}{\rangle}
\newcommand{\lv}{\lvert}
\newcommand{\rv}{\rvert}
\newcommand{\lV}{\lVert}
\newcommand{\rV}{\rVert}
\newcommand{\suchthat}{\mathrel{}\middle|\mathrel{}}
\newcommand{\mB}{\mathcal{B}}
\newcommand{\mC}{\mathcal{C}}
\newcommand{\mE}{\mathcal{E}}
\newcommand{\mI}{\mathcal{I}}
\newcommand{\mP}{\mathcal{P}}
\newcommand{\mQ}{\mathcal{Q}}
\newcommand{\bN}{\mathbb{N}}
\newcommand{\bR}{\mathbb{R}}
\def\sideremark#1{\ifvmode\leavevmode\fi\vadjust{\vbox to0pt{\vss
 \hbox to 0pt{\hskip\hsize\hskip1em
 \vbox{\hsize3cm\tiny\raggedright\pretolerance10000
 \noindent #1\hfill}\hss}\vbox to8pt{\vfil}\vss}}}
\newcommand{\comment}[1]{}
\newtheorem{thm}{Theorem}[section]
\newtheorem{prop}[thm]{Proposition}
\newtheorem{lem}[thm]{Lemma}
\newtheorem{cor}[thm]{Corollary}
\theoremstyle{definition}
\newtheorem{defn}[thm]{Definition}
\theoremstyle{remark}
\newtheorem{remark}[thm]{Remark}
\numberwithin{equation}{section}
\begin{document}

\title[Sharp weighted Sobolev trace inequalities]{Sharp weighted Sobolev trace inequalities and fractional powers of the Laplacian}
\author{Jeffrey S. Case}
\thanks{Partially supported by a grant from the Simons Foundation (Grant No.\ 524601)}
\address{109 McAllister Building, Department of Mathematics, Penn State University, University Park, PA 16802, USA}
\email{jscase@psu.edu}
\keywords{fractional Laplacian, sharp Sobolev trace inequality, extension}
\subjclass[2010]{Primary 35J70; Secondary 53A30}
\begin{abstract}
 We establish a family of sharp Sobolev trace inequalities involving the $W^{k,2}(\bR_+^{n+1},y^a)$-norm.  These inequalities are closely related to the realization of fractional powers of the Laplacian on $\bR^n=\partial\bR_+^{n+1}$ as generalized Dirichlet-to-Neumann operators associated to powers of the weighted Laplacian in upper half space, generalizing observations of Caffarelli--Silvestre and of Yang.
\end{abstract}
\maketitle

\section{Introduction}
\label{sec:intro}

In their seminal paper~\cite{CaffarelliSilvestre2007}, Caffarelli and Silvestre recovered the fractional Laplacian $(-\oDelta)^\gamma$, $\gamma\in(0,1)$, on $\bR^n$ as a Dirichlet-to-Neumann operator associated to the weighted Laplacian $\Delta_m:=\Delta+my^{-1}\partial_y$ in $\bR_+^{n+1}:=\bR^n\times(0,\infty)$ for $m=1-2\gamma$, where $y$ is the coordinate on $(0,\infty)$.  Specifically, if $U$ is a solution of
\begin{equation}
 \label{eqn:cs-extension}
 \begin{cases}
  \Delta_mU = 0, & \text{in $\bR_+^{n+1}$}, \\
  U = f, & \text{on $\bR^n$},
 \end{cases}
\end{equation}
then
\begin{equation}
 \label{eqn:cs-d2n}
 (-\oDelta)^\gamma f = -d_\gamma^{-1} \lim_{y\to0^+} y^m\partial_yU, \qquad d_\gamma=2^{1-2\gamma}\frac{\Gamma(1-\gamma)}{\Gamma(\gamma)} .
\end{equation}
Applying the Dirichlet principle, one deduces that
\begin{equation}
 \label{eqn:cs-energy-trace}
 \int_{\bR_+^{n+1}} \lv\nabla U\rv^2\,y^{1-2\gamma}\,dx\,dy \geq d_\gamma\oint_{\bR^n} f(-\oDelta)^\gamma f\,dx
\end{equation}
for any $U\in W^{1,2}(\bR_+^{n+1},y^{1-2\gamma})$, where $f:=U(\cdot,0)$.  Moreover, equality holds if and only if $U$ satisfies~\eqref{eqn:cs-extension}.  We may regard~\eqref{eqn:cs-energy-trace} as a functional inequality for the Sobolev trace embedding $W^{1,2}(\bR_+^{n+1},y^{1-2\gamma})\hookrightarrow H^\gamma(\bR^n)$, while the extension~\eqref{eqn:cs-extension} implies the existence of a bounded right inverse.  Combining~\eqref{eqn:cs-energy-trace} with Lieb's sharp fractional Sobolev inequality~\cite{Lieb1983} yields the sharp Sobolev trace inequality
\begin{equation}
 \label{eqn:cs-sobolev-trace}
 \int_{\bR_+^{n+1}} \lv\nabla U\rv^2\,y^{1-2\gamma}\,dx\,dy \geq \frac{\Gamma\bigl(\frac{n+2\gamma}{2}\bigr)}{\Gamma\bigl(\frac{n-2\gamma}{2}\bigr)}\Vol(S^n)^{\frac{2\gamma}{n}}d_\gamma\left(\oint_{\bR^n} \lv f\rv^{\frac{2n}{n-2\gamma}}\,dx\right)^{\frac{n-2\gamma}{n}}
\end{equation}
for any $U\in W^{1,2}(\bR_+^{n+1},y^{1+2\gamma})$, where $f:=U(\cdot,0)$.  Moreover, equality holds if and only if $U$ satisfies~\eqref{eqn:cs-extension} and there are constants $a\in\bR$ and $\varepsilon>0$ and a point $\xi\in\bR^n$ such that
\[ f(x) = a\left(\varepsilon + \lv x-\xi\rv^2\right)^{-\frac{n-2\gamma}{2}} . \]

One can also recover the fractional Laplacian $(-\oDelta)^\gamma$, $\gamma\in(0,\infty)\setminus\bN$, from the extension~\eqref{eqn:cs-extension}, though one must replace~\eqref{eqn:cs-d2n} by a limit involving additional derivatives in $y$; see~\cite{ChangGonzalez2011}.  However, this approach fails to recover a sharp Sobolev trace inequality.  Instead, R.\ Yang~\cite{ChangYang2017,Yang2013} showed that one should replace~\eqref{eqn:cs-extension} by a higher-order degenerate elliptic boundary value problem.  Roughly speaking, one can obtain $(-\oDelta)^\gamma$ through a formula similar to~\eqref{eqn:cs-d2n} by finding the solution of $\Delta_m^kU=0$ with Dirichlet boundary data, where $m:=1-2[\gamma]$ and $k:=\lfloor\gamma\rfloor+1$.  Here $\lfloor\gamma\rfloor$ is the unique integer satisfying $\lfloor\gamma\rfloor<\gamma<\lfloor\gamma\rfloor+1$ and $[\gamma]:=\gamma-\lfloor\gamma\rfloor$ is the fractional part of $\gamma$.  The Dirichlet principle for this higher-order problem gives rise to sharp Sobolev trace inequalities analogous to~\eqref{eqn:cs-energy-trace} and~\eqref{eqn:cs-sobolev-trace}, at the cost of imposing (unnecessary) boundary conditions.  We have avoided explicitly stating the sharp constants here because of a computational error in~\cite{ChangYang2017,Yang2013}; see~\eqref{eqn:intro-restricted-energy-trace} and~\eqref{eqn:intro-sobolev} below, or~\cite{Yang2019}, for the correct inequalities.

With some care, the above discussion extends to asymptotically hyperbolic manifolds.  Specifically, Graham and Zworski~\cite{GrahamZworski2003} constructed conformally covariant pseudodifferential operators on the boundary with principal symbol that of $(-\oDelta)^\gamma$ using scattering theory for the interior Laplacian.  Chang and Gonz\'alez~\cite{ChangGonzalez2011} observed (see also~\cite{CaseChang2013}) that, in the Poincar\'e upper half space model of hyperbolic space, this construction is equivalent to the Caffarelli--Silvestre extension~\cite{CaffarelliSilvestre2007}.  Chang and the author~\cite{CaseChang2013} showed that in the special case of asymptotically hyperbolic Einstein (AHE) manifolds, the Graham--Zworski operators are equivalent to particular generalized Dirichlet-to-Neumann operators associated to (weighted) GJMS operators~\cite{Case2011t,CaseChang2013,GJMS1992} in the compactification of the AHE manifold.  When restricted to the Poincar\'e upper half space, this recovers the Yang extension~\cite{ChangYang2017,Yang2013}.

Surprisingly, the higher-order fractional Laplacian $(-\oDelta)^\gamma$, $\gamma\in(1,2)\cup\{5/2\}$, can be recovered from an underdetermined (degenerate) elliptic boundary value problem~\cite{Case2015fi,Case2015b,CaseLuo2018}.  For example, if $\Delta^3U=0$ and $f=U(\cdot,0)$, then $\partial_y\Delta^2U(\cdot,0)$ is proportional to $(-\oDelta)^{5/2}f$; see~\cite{CaseLuo2018}.  A more refined version of this observation manifests as a sharp Sobolev trace inequality controlling the embedding $W^{3,2}(\bR_+^{n+1}) \hookrightarrow H^5(\bR^n) \oplus H^3(\bR^n) \oplus H^1(\bR^n)$ and an extension giving a continuous right inverse~\cite{CaseLuo2018}.  This improves the sharp Sobolev trace inequalities deduced by Yang~\cite{ChangYang2017,Yang2013} by removing the need to impose boundary conditions.  Similar results hold for sharp weighted Sobolev trace inequalities involving $W^{2,2}(\bR_+^{n+1},y^s)$, $s\in(-1,1)$; see~\cite{Case2015fi,Case2015b}.

The discussion of the previous paragraph extends to a large class of compact manifolds~\cite{Case2015fi,Case2015b,CaseLuo2018}, including all compactifications of conformally compact Einstein manifolds.  In this setting, one realizes the Graham--Zworski operators~\cite{GrahamZworski2003} as generalized Dirichlet-to-Neumann operators associated to a (weighted) GJMS operator.  The operators involved, and indeed the extension problem, are conformally covariant.  As a consequence, one readily deduces sharp Sobolev trace inequalities in the Euclidean disk from those in Euclidean upper half space.

The purpose of this article is to extend the observations described in the last two paragraphs to all $\gamma\in(0,\infty)\setminus\bN$ in the setting of Euclidean upper half space.  In the hopes of making our results more broadly accessible, we present our proofs with a minimal amount of geometric background.  We expect many of these results to extend to compactifications of Poincar\'e--Einstein manifolds.

The remainder of this introduction is devoted to explaining our main results in the special case of generalized Dirichlet-to-Neumann operators associated to the poly-Laplacian.  These recover half-integer fractional powers of the Laplacian; i.e. $(-\oDelta)^{k+\frac{1}{2}}$, $k\in\bN_0:=\{0,1,2,\dotsc\}$.  This is done both for clarity of the exposition and because we expect these cases to be of the most geometric interest.  Comments describing the general results will be given in the introduction, but only detailed in later sections.

Fix $k\in\bN_0$.  The boundary operators $B_s^{2k+1}\colon C^\infty(\overline{\bR_+^{n+1}})\to C^\infty(\bR^n)$, $0\leq s\leq 2k+1$, associated to the poly-Laplacian $(-\Delta)^{k+1}$ are defined recursively in terms of the Laplacian $\Delta$ and the derivative $\partial_y$ in $\bR_+^{n+1}$ and the induced Laplacian $\oDelta$ on $\bR^n$ as follows:
\begin{align*}
 \iota^\ast\circ\Delta^j & = \sum_{\ell=0}^j (-1)^\ell\binom{j}{\ell}\frac{(k-\ell)!}{(k-j)!}\frac{\Gamma\bigl(\frac{1}{2}+k-j-\ell\bigr)}{\Gamma\bigl(\frac{1}{2}+k-2\ell\bigr)}\oDelta^{j-\ell}B_{2\ell}^{2k+1}, \\
 \iota^\ast\circ\partial_y\Delta^j & = (-1)^{j+1}\sum_{\ell=0}^j\binom{j}{\ell}\frac{(k-\ell)!}{(k-j)!}\frac{\Gamma\bigl(\frac{3}{2}-k+2\ell\bigr)}{\Gamma\bigl(\frac{3}{2}-k+j+\ell\bigr)}\oDelta^{j-\ell}B_{2\ell+1}^{2k+1} ,
\end{align*}
where $\iota^\ast\colon C^\infty(\overline{\bR_+^{n+1}})\to C^\infty(\bR^n)$ is the restriction operator, $(\iota^\ast U)(x)=U(x,0)$.  See Definition~\ref{defn:operators} for the corresponding definitions for the boundary operators associated to the weighted GJMS operators.

These definitions are justified by three properties.  First, they are such that the associated Dirichlet form
\begin{multline}
 \label{eqn:quadratic-form}
 \mQ_{2k+1}(U,V) := \int_{\bR_+^{n+1}} U\,(-\Delta)^{k+1}V\,dx\,dy \\ + \sum_{j=0}^{\lfloor k/2\rfloor} \oint_{\bR^n} B_{2j}^{2k+1}(U)B_{2k+1-2j}^{2k+1}(V)\,dx - \sum_{j=0}^{k-\lfloor k/2\rfloor-1}\oint_{\bR^n} B_{2j+1}^{2k+1}(U)B_{2k-2j}^{2k+1}(V)\,dx
\end{multline}
is symmetric.  Denote
\[ \left\lp \Delta^{\frac{k+1}{2}}U, \Delta^{\frac{k+1}{2}}V\right\rp := \begin{cases}
  \bigl(\Delta^{(k+1)/2}U\bigr)\bigl(\Delta^{(k+1)/2}V\bigr), & \text{if $k$ is odd}, \\
  \lp \nabla\Delta^{k/2}U,\nabla\Delta^{k/2}V\rp, & \text{if $k$ is even} .
 \end{cases} \]
We in fact prove that
\[ \mQ_{2k+1}(U,V) - \int_{\bR_+^{n+1}} \left\lp \Delta^{\frac{k+1}{2}}U, \Delta^{\frac{k+1}{2}}V\right\rp\,dx\,dy \]
can be written as a symmetric boundary integral which depends only on the Dirichlet data $B_s^{2k+1}(U)$ and $B_s^{2k+1}(V)$, $0\leq s\leq k$.  See Theorem~\ref{thm:symmetry} for an explicit formula for this difference in the general case of the boundary operators associated to the weighted GJMS operators.

Second, the boundary operators associated to $(-\Delta)^{k+1}$ are covariant with respect to the group of conformal isometries of $(\bR_+^{n+1};\bR^n)$; i.e.\ the group, under composition, of maps generated by translations, rotations, and spherical inversions which fix the boundary $\bR^n=\partial\bR_+^{n+1}$.  Indeed, if $\Phi\colon\bR_+^{n+1}\to\bR_+^{n+1}$ is a conformal isometry of $(\bR_+^{n+1};\bR^n)$, then
\begin{equation}
 \label{eqn:intro-conformal-Bk}
 \Phi^\ast \left(B_{2j}^{2k+1}U\right) = (\iota^\ast J_\Phi)^{-\frac{n-1-2k+4j}{2(n+1)}} B_{2j}^{2k+1}\left( J_\Phi^{\frac{n-1-2k}{2(n+1)}}\Phi^\ast U\right)
\end{equation}
for all $0\leq j\leq 2k+1$, where $J_\Phi$ is the determinant of the Jacobian of $\Phi$.  Recall that $(-\Delta)^{k+1}$ is also conformally covariant,
\begin{equation}
 \label{eqn:intro-conformal-Lk}
 \Phi^\ast\left((-\Delta)^{k+1}U\right) = J_\Phi^{-\frac{n+3+2k}{2(n+1)}} (-\Delta)^{k+1}\left( J_\Phi^{\frac{n-1-2k}{2(n+1)}} \Phi^\ast U\right) .
\end{equation}
In particular, the right composition factors of~\eqref{eqn:intro-conformal-Bk} and~\eqref{eqn:intro-conformal-Lk} are the same and $\mQ_{2k+1}$ is conformally covariant.  See Theorem~\ref{thm:conformal_invariance} for the precise statement of conformal covariance for general $\gamma\in(0,\infty)\setminus\bN$.

Third, the boundary operators associated to $(-\Delta)^{k+1}$ are such that the generalized Dirichlet-to-Neumann operators recover the fractional Laplacians $(-\oDelta)^{\frac{1}{2}+j}$, $0\leq j\leq k$.  More precisely:

\begin{thm}
 \label{thm:intro-cs}
 Let $k\in\bN_0$ and suppose that $\Delta^{k+1}U=0$.  Then
 \begin{align}
  \label{eqn:intro-cs-even} B_{2k+1-2j}^{2k+1}(U) & = c_{j,k}(-\oDelta)^{k-2j+\frac{1}{2}}B_{2j}^{2k+1}(U), && 0\leq j\leq \lfloor k/2\rfloor , \\
  \label{eqn:intro-cs-odd} B_{2k-2j}^{2k+1}(U) & =-d_{j,k}(-\oDelta)^{k-2j-\frac{1}{2}}B_{2j+1}^{2k+1}(U), && 0\leq j\leq k-\lfloor k/2\rfloor-1 ,
 \end{align}
 where
 \begin{align}
  \label{eqn:cjk} c_{j,k} & = 2^{k-2j}\frac{(k-j)!(2k-2j+1)!!}{j!(2j-1)!!(2k-4j-1)!!(2k-4j+1)!!} , \\
  \label{eqn:djk} d_{j,k} & = 2^{k-2j-1}\frac{(k-j)!(2k-2j-1)!!}{j!(2j+1)!!(2k-4j-1)!!(2k-4j-3)!!} .
 \end{align}
\end{thm}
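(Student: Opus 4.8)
The plan is to take the partial Fourier transform of $U$ in the $\bR^n$-variables, which turns $\Delta^{k+1}U=0$ into an ordinary differential equation in $y$, and then to verify the two families of identities on an explicit basis of its decaying solutions. Write $\widehat U(\xi,y)$ for the Fourier transform in $x$, set $r:=\lv\xi\rv$, and recall that $(-\oDelta)^\gamma$ acts as multiplication by $r^{2\gamma}$. Then $\Delta^{k+1}U=0$ becomes $(\partial_y^2-r^2)^{k+1}\widehat U(\xi,\cdot)=0$; for each fixed $\xi\neq0$, the solutions compatible with $U$ lying in the relevant Sobolev class (equivalently, decaying as $y\to\infty$; one may reduce to smooth, rapidly decreasing $U$) span the $(k+1)$-dimensional space with basis $v_i(y):=y^ie^{-ry}$, $0\le i\le k$. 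Since $B_s^{2k+1}$ and $(-\oDelta)^\gamma$, and hence both sides of~\eqref{eqn:intro-cs-even} and~\eqref{eqn:intro-cs-odd}, are linear in $U$, it suffices after Fourier transform to check each identity when $\widehat U(\xi,\cdot)=v_i$ for every $i$, i.e.\ to verify finitely many scalar identities in $r$.

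Each $B_s^{2k+1}U$ is a fixed linear combination of the boundary jets $\iota^\ast\Delta^jU$ and $\iota^\ast\partial_y\Delta^jU$; on the Fourier side these equal $(\partial_y^2-r^2)^j\widehat U|_{y=0}$ and $\partial_y(\partial_y^2-r^2)^j\widehat U|_{y=0}$, which one computes from
\[ (\partial_y^2-r^2)^jv_i = (-1)^i\frac{\partial^i}{\partial s^i}\Bigl[(s^2-r^2)^je^{-sy}\Bigr]\Big|_{s=r} \]
(a consequence of $v_i=(-1)^i\partial_s^ie^{-sy}|_{s=r}$ on commuting $\partial_s$ past $\partial_y^2-r^2$) and its $\partial_y$-derivative, expanding $(s^2-r^2)^j=(s-r)^j(s+r)^j$ about $s=r$; each jet is then an explicit polynomial in $r$ involving only finitely many $v_i$. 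The recursion defining $B_{2\ell}^{2k+1}$ expresses $\iota^\ast\Delta^\ell U$ as a combination of $\oDelta^{\ell-m}B_{2m}^{2k+1}U$, $0\le m\le\ell$, with leading ($m=\ell$) coefficient $(-1)^\ell\neq0$, so one solves recursively for $\widehat{B_{2\ell}^{2k+1}U}$, and similarly for $\widehat{B_{2\ell+1}^{2k+1}U}$ using the $\iota^\ast\partial_y\Delta^\ell$ recursion. Substituting $\widehat U=v_i$ and simplifying, each of~\eqref{eqn:intro-cs-even} and~\eqref{eqn:intro-cs-odd} reduces to a single identity equating a ratio of Gamma functions at half-integer arguments with the constant $c_{j,k}$ or $d_{j,k}$ of~\eqref{eqn:cjk}--\eqref{eqn:djk}.

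I expect this last step to be the main obstacle. Inverting the recursions and collecting terms produces hypergeometric sums whose closed-form evaluation (a Chu--Vandermonde-type summation) requires care: the coefficients are signed double factorials with possibly negative arguments, and the parity bookkeeping governed by the $\lfloor k/2\rfloor$ cutoffs in~\eqref{eqn:quadratic-form} must be tracked through the indices — exactly the sort of computation where the introduction flags an earlier error. A useful partial check that fixes the structure is to evaluate on the Poisson (harmonic) extension $\widehat U=\widehat f\,e^{-ry}$, i.e.\ $\widehat U=v_0$: there $\iota^\ast\Delta^jU=0$ for $j\ge1$, so each $\widehat{B_s^{2k+1}U}$ is a pure power of $r$ times $\widehat f$, and~\eqref{eqn:intro-cs-even}--\eqref{eqn:intro-cs-odd} reduce to matching those powers (automatic) together with one scalar ratio each; the cases $\widehat U=v_i$, $i\ge1$, then pin down the remaining constants.

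Finally, this scheme extends to all $\gamma\in(0,\infty)\setminus\bN$: after Fourier transform the general degenerate extension equation $\Delta_m^{\lfloor\gamma\rfloor+1}U=0$ with $m=1-2[\gamma]$ becomes an iterated modified Bessel equation whose decaying solutions replace the $v_i$, and the same recursion-inversion and summation argument applies — the half-integer case being elementary precisely because there $m=0$ and those solutions are simply $y^ie^{-ry}$. One may alternatively deduce the half-integer statements from the general ones by specialization.
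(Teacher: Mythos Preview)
Your approach is correct in outline but takes a genuinely different route from the paper. You work in the monomial basis $v_i(y)=y^ie^{-ry}$ of decaying solutions to $(\partial_y^2-r^2)^{k+1}\widehat U=0$ and must then invert the recursive definitions of the $B_s^{2k+1}$ on each $v_i$, which is what produces the Chu--Vandermonde-type sums you rightly flag as the main obstacle. The paper avoids this by passing through scattering on hyperbolic space: by Lemma~\ref{lem:kernel} and Theorem~\ref{thm:bvp}, every decaying element of $\ker\Delta^{k+1}$ is a linear combination of rescaled Poisson extensions $y^{-\frac{n-2\gamma}{2}}\mP\bigl(\frac{n}{2}+\cgamma\bigr)f$ with $\cgamma\in\mI^{2\gamma}$, and Propositions~\ref{prop:reln-to-scattering} and~\ref{prop:reln-to-scattering2} show that on each such piece \emph{all but two} of the boundary operators in $\mB^{2\gamma}$ vanish, the surviving pair returning (up to explicit constants) $f$ and $S\bigl(\frac{n}{2}+\cgamma\bigr)f$. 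The scattering definition~\eqref{eqn:fractional_gjms_definition} then gives $(-\oDelta)^{\cgamma}$ directly, so $c_{j,k}$ and $d_{j,k}$ emerge as a single product of Gamma factors with no summation to evaluate. In effect the paper chooses the basis of $\ker\Delta^{k+1}$ that is already diagonal for the family $\mB^{2\gamma}$; yours is not, and the hypergeometric sums are exactly the change-of-basis cost. Your route is more elementary (no scattering input, no hyperbolic geometry) and would go through, but the paper's basis choice eliminates precisely the obstacle you identify and makes the general-$\gamma$ statement (Theorem~\ref{thm:cs}) no harder than the half-integer one.
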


Here $(2j+1)!!:=1\cdot 3\cdot 5\dotsm (2j+1)$ is defined for all integers $j\geq-1$, with the convention $(-1)!!=1$.

In particular, Theorem~\ref{thm:intro-cs} states that the fractional Laplacian $(-\oDelta)^{k+\frac{1}{2}}f$ of a function $f$ can be recovered by applying $B_{2k+1}^{2k+1}$ to \emph{any} solution of $\Delta^{k+1}U$ on $\bR_+^{n+1}$ with $U(\cdot,0)=f$.  When $k=0$, solutions are unique and this recovers the Caffarelli--Silvestre extension~\cite{CaffarelliSilvestre2007}.  When $k\geq 1$, there is freedom to specify higher-order boundary data, and hence Theorem~\ref{thm:intro-cs} is more general than the Yang extension~\cite{ChangYang2017,Yang2013}.  Indeed, we readily recover the Yang extension as follows (cf.\ \cite{Yang2019}):

Given a function $f$ on $\bR^n$, let $U$ be the unique solution of
\begin{equation}
 \label{eqn:intro-extension}
 \begin{cases}
  \Delta^{k+1}U = 0, & \text{in $\bR_+^{n+1}$}, \\ 
  U = f, & \text{on $\bR^n$}, \\
  \Delta^jU = \frac{k!(2k-2j-1)!!}{2^j(k-j)!(2k-1)!!}\oDelta^jf, & \text{on $\bR^n$ for $1\leq j\leq\lfloor k/2\rfloor$}, \\
  \partial_y\Delta^jU = 0, & \text{on $\bR^n$ for $0\leq j\leq k - \lfloor k/2\rfloor-1$} .
 \end{cases}
\end{equation}
These choices ensure that $B_{2j}^{2k+1}(U)=0$ for $1\leq j\leq\lfloor k/2\rfloor$ and that $B_{2j+1}^{2k+1}(U)=0$ for $0\leq j\leq k-\lfloor k/2\rfloor-1$.  Applying Theorem~\ref{thm:intro-cs}, we deduce that
\begin{equation}
 \label{eqn:intro-cs}
 (-\oDelta)^{k+\frac{1}{2}}f = (-1)^{k+1}\frac{(2k-1)!!}{2^kk!}\partial_y\Delta^kU(\cdot,0) .
\end{equation}
This is precisely Yang's result~\cite{ChangYang2017,Yang2013}, except that the constants have been corrected so that the solution to~\eqref{eqn:intro-extension} agrees with the solution of the Poisson equation used by Graham and Zworski~\cite{GrahamZworski2003} to define $(-\oDelta)^{k+\frac{1}{2}}$ via scattering theory.

Theorem~\ref{thm:cs} below gives a more general version of Theorem~\ref{thm:intro-cs} which in particular recovers the fractional Laplacian $(-\oDelta)^\gamma$ as a generalized Dirichlet-to-Neumann operator associated to the $(\lfloor\gamma\rfloor+1)$-th power of a suitable weighted Laplacian, also generalizing the result of Yang~\cite{ChangYang2017,Yang2013}.

One reason to desire the symmetry of the quadratic form~\eqref{eqn:quadratic-form} is that it implies that many boundary value problems involving $\Delta^{k+1}$ and the boundary operators $B_{j}^{2k+1}$ are variational.  For example, if $\mB\subset\left\{B_j^{2k+1}\suchthat 0\leq j\leq 2k+1\right\}$ contains exactly $k+1$ elements, then the boundary value problem $\bigl((-\Delta)^{k+1};\mB\bigr)$ is variational if and only if $\mB$ has the property that $B_j^{2k+1}\in\mB$ if and only if $B_{2k+1-j}^{2k+1}\not\in\mB$.  Such boundary value problems are well-posed if the compatibility condition of Agmon, Douglis and Nirenberg~\cite{AgmonDouglisNirenberg1959} is also satisfied.  For example, the Dirichlet problem
\begin{equation}
 \label{eqn:intro-dirichlet-problem}
 \begin{cases}
  \Delta^{k+1}U = 0, & \text{in $\bR_+^{n+1}$}, \\
  B_{2j}^{2k+1}U = f^{(2j)}, & \text{on $\bR^n$ for $0\leq j\leq\lfloor k/2\rfloor$}, \\
  B_{2j+1}^{2k+1}U = \phi^{(2j)}, & \text{on $\bR^n$ for $0\leq j\leq k-\lfloor k/2\rfloor-1$}
 \end{cases}
\end{equation}
is well-posed.  Here we are specifying $\mB=\left\{B_j^{2k+1}\suchthat 0\leq j\leq k\right\}$; we have written~\eqref{eqn:intro-dirichlet-problem} in this somewhat strange way to highlight a distinction between the ``even'' and ``odd'' boundary operators which is more pronounced for the Dirichlet problem associated to the fractional Laplacian $(-\oDelta)^\gamma$, $\gamma\in(0,\infty)\setminus(\frac{1}{2}\bN)$; cf.\ \eqref{eqn:bvp}.

The Dirichlet problem~\eqref{eqn:intro-dirichlet-problem} can be solved by minimizing the energy
\[ \mE_{2k+1}(U) := \mQ_{2k+1}(U,U) \]
among all functions $U$ with prescribed Dirichlet data.  Combining this with Theorem~\ref{thm:intro-cs} yields the following sharp Sobolev trace inequality:

\begin{thm}
 \label{thm:intro-energy-trace}
 Let $k\in\bN_0$.  Given any function $U$ on $\bR_+^{n+1}$, it holds that
 \begin{multline}
  \label{eqn:intro-energy-trace}
  \mE_{2k+1}(U) \geq \sum_{j=0}^{\lfloor k/2\rfloor} c_{j,k}\oint_{\bR^n} f^{(2j)}(-\oDelta)^{k-2j+\frac{1}{2}}f^{(2j)}\,dx \\ + \sum_{j=0}^{k-\lfloor k/2\rfloor-1} d_{j,k}\oint_{\bR^n} \phi^{(2j)}(-\oDelta)^{k-2j-\frac{1}{2}}\phi^{(2j)}\,dx ,
 \end{multline}
 where $f^{(2j)}:=B_{2j}^{2k+1}(U)$ and $\phi^{(2j)}:=B_{2j+1}^{2k+1}(U)$, and the constants $c_{j,k}$ and $d_{j,k}$ are given by~\eqref{eqn:cjk} and~\eqref{eqn:djk}, respectively.  Moreover, equality holds if and only if $U$ is the unique solution of~\eqref{eqn:intro-dirichlet-problem}.
\end{thm}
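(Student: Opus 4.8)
The plan is to run the Dirichlet principle for the boundary value problem~\eqref{eqn:intro-dirichlet-problem}. First I would dispose of trivial cases: if $\mE_{2k+1}(U)=\infty$, or if the Dirichlet data $f^{(2j)}:=B_{2j}^{2k+1}(U)$ and $\phi^{(2j)}:=B_{2j+1}^{2k+1}(U)$ fail to lie in the fractional Sobolev spaces in which the right-hand side of~\eqref{eqn:intro-energy-trace} is finite, then there is nothing to prove. Otherwise I would invoke the well-posedness of~\eqref{eqn:intro-dirichlet-problem}, which holds by the Agmon--Douglis--Nirenberg theory~\cite{AgmonDouglisNirenberg1959} since the $k+1$ prescribed boundary operators satisfy the complementing condition, to obtain the unique finite-energy solution $\overline{U}$ of~\eqref{eqn:intro-dirichlet-problem} with this Dirichlet data, and I would set $W:=U-\overline{U}$. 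By construction $B_{2j}^{2k+1}(W)=0$ for $0\le j\le\lfloor k/2\rfloor$ and $B_{2j+1}^{2k+1}(W)=0$ for $0\le j\le k-\lfloor k/2\rfloor-1$, and a short index count (splitting on the parity of $k$) shows these are precisely the boundary operators $B_s^{2k+1}$ with $0\le s\le k$.

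Next I would expand, using the symmetry of $\mQ_{2k+1}$ furnished by Theorem~\ref{thm:symmetry},
\[
\mE_{2k+1}(U)=\mE_{2k+1}(\overline{U})+2\,\mQ_{2k+1}(\overline{U},W)+\mE_{2k+1}(W),
\]
and show the cross term vanishes. By symmetry $\mQ_{2k+1}(\overline{U},W)=\mQ_{2k+1}(W,\overline{U})$; in the latter the interior integral $\int_{\bR_+^{n+1}}W\,(-\Delta)^{k+1}\overline{U}\,dx\,dy$ vanishes because $\Delta^{k+1}\overline{U}=0$, while every boundary integrand in~\eqref{eqn:quadratic-form} for $\mQ_{2k+1}(W,\overline{U})$ carries a factor $B_{2j}^{2k+1}(W)$ with $0\le j\le\lfloor k/2\rfloor$ or $B_{2j+1}^{2k+1}(W)$ with $0\le j\le k-\lfloor k/2\rfloor-1$, all of which are zero. (It is essential to flip to $\mQ_{2k+1}(W,\overline U)$ first: in $\mQ_{2k+1}(\overline U,W)$ the factors falling on $W$ are the high-order operators $B_s^{2k+1}(W)$ with $s>k$, which need not vanish.) Applying Theorem~\ref{thm:symmetry} once more to $W$, and using $B_s^{2k+1}(W)=0$ for $0\le s\le k$, shows that the difference $\mE_{2k+1}(W)-\int_{\bR_+^{n+1}}\bigl(\Delta^{(k+1)/2}W\bigr)^2\,dx\,dy$, being a boundary expression in exactly those data, drops out; hence $\mE_{2k+1}(W)=\int_{\bR_+^{n+1}}\bigl(\Delta^{(k+1)/2}W\bigr)^2\,dx\,dy\ge0$, and therefore $\mE_{2k+1}(U)\ge\mE_{2k+1}(\overline{U})$.

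Finally I would evaluate $\mE_{2k+1}(\overline{U})$ directly from~\eqref{eqn:quadratic-form}: the interior term drops out, and substituting the Dirichlet-to-Neumann identities~\eqref{eqn:intro-cs-even}--\eqref{eqn:intro-cs-odd} of Theorem~\ref{thm:intro-cs} (applicable since $\Delta^{k+1}\overline{U}=0$), together with $B_{2j}^{2k+1}(\overline{U})=f^{(2j)}$ and $B_{2j+1}^{2k+1}(\overline{U})=\phi^{(2j)}$, converts the boundary sums in~\eqref{eqn:quadratic-form} into exactly the right-hand side of~\eqref{eqn:intro-energy-trace}; here the minus sign in~\eqref{eqn:quadratic-form} cancels the minus sign in~\eqref{eqn:intro-cs-odd}, so the $\phi^{(2j)}$-terms appear with coefficient $+d_{j,k}$. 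This gives the inequality, with equality when $U=\overline{U}$. For the converse, equality forces $\mE_{2k+1}(W)=0$, i.e.\ $\Delta^{(k+1)/2}W=0$; since the quadratic form $W\mapsto\int_{\bR_+^{n+1}}\bigl(\Delta^{(k+1)/2}W\bigr)^2\,dx\,dy$ is coercive on the space of functions with vanishing Dirichlet data $B_s^{2k+1}(\cdot)=0$, $0\le s\le k$---this is the estimate underpinning the well-posedness of~\eqref{eqn:intro-dirichlet-problem}---its null space there is trivial, so $W=0$ and $U=\overline{U}$. I expect the only genuine work to be bookkeeping: confirming that the boundary operators entering the cross term are precisely those of order $\le k$, and that the signs and constants in the last step assemble correctly into~\eqref{eqn:cjk}--\eqref{eqn:djk}; the analytic ingredients---well-posedness and coercivity---are standard once Theorems~\ref{thm:symmetry} and~\ref{thm:intro-cs} are in hand.
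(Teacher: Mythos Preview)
Your proposal is correct and follows essentially the same route as the paper: the inequality is a Dirichlet principle (Theorem~\ref{thm:dirichlet-principle}) combined with the Dirichlet-to-Neumann identities (Theorem~\ref{thm:intro-cs}/\ref{thm:cs}) to evaluate the energy of the minimizer, exactly as in Corollary~\ref{cor:energy-trace}. The only notable difference is in how you handle existence and uniqueness: you invoke Agmon--Douglis--Nirenberg and an abstract coercivity statement, whereas the paper constructs the solution explicitly as a sum of rescaled Poisson extensions (Theorem~\ref{thm:bvp}) and deduces uniqueness by an iterative reduction on the order of the equation; the explicit construction has the advantage of bypassing any verification of the complementing condition and of working verbatim in the weighted case $\gamma\notin\tfrac12\bN$, but for the unweighted statement your appeal to standard elliptic theory is perfectly adequate.
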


We refer to~\eqref{eqn:intro-energy-trace} as a sharp Sobolev trace inequality because it easily establishes the (well-known) embedding
\[ W^{k+1,2}(\bR_+^{n+1}) \hookrightarrow \bigoplus_{j=0}^{k} H^{j+\frac{1}{2}}(\bR^n) \]
as well as the existence of a continuous right inverse.  The analogue of Theorem~\ref{thm:intro-energy-trace} involving fractional Laplacians of general order is stated as Corollary~\ref{cor:energy-trace} below.

Consider for the moment the special class of functions
\[ \mC_+ := \left\{ U \in W^{k+1,2}(\bR_+^{n+1}) \suchthat B_{j}^{2k+1}(U)=0, 1\leq j\leq k \right\} ; \]
note that $U$ satisfies the boundary conditions of~\eqref{eqn:intro-extension} if and only if $U\in\mC_+$ and $U(\cdot,0)=f$.  Theorem~\ref{thm:intro-energy-trace} implies that
\begin{equation}
 \label{eqn:intro-restricted-energy-trace}
 \mE_{2k+1}(U) \geq \frac{2^kk!}{(2k-1)!!}\oint_{\bR^n} f(-\oDelta)^{k+\frac{1}{2}}f\,dx
\end{equation}
for any $U\in\mC_+$, where $f:=U(\cdot,0)$.  Moreover, equality holds if and only if $U$ solves~\eqref{eqn:intro-extension}.  This inequality provides a starting point for many of the sharp Sobolev trace inequalities on manifolds recently considered in the literature; e.g.\ \cite{AcheChang2015,Case2015b,CaseLuo2018,NgoNguyenPhan2018}.

Combining Theorem~\ref{thm:intro-energy-trace} with the sharp fractional Sobolev inequalities~\cite{Lieb1983} yields the following more typical formulation of a sharp Sobolev trace inequality.

\begin{thm}
 \label{thm:intro-trace}
 Let $k\in\bN_0$ and suppose that $n>2k+1$.  Given any function $U$ on $\bR_+^{n+1}$, it holds that
 \begin{multline}
  \label{eqn:intro-trace}
  \mE_{2k+1}(U) \geq \sum_{j=0}^{\lfloor k/2\rfloor} c_{j,k}\frac{\Gamma(\frac{n+2k-4j+1}{2})}{\Gamma(\frac{n-2k+4j-1}{2})}\Vol(S^n)^{\frac{2k-4j+1}{n}}\lV f^{(2j)}\rV_{\frac{2n}{n-2k+4j-1}}^2 \\ + \sum_{j=0}^{k-\lfloor k/2\rfloor-1} d_{j,k}\frac{\Gamma(\frac{n+2k-4j-1}{2})}{\Gamma(\frac{n-2k+4j+1}{2})}\Vol(S^n)^{\frac{2k-4j-1}{n}}\lV \phi^{(2j)}\rV_{\frac{2n}{n-2k+4j+1}}^2 ,
 \end{multline}
 where $f^{(2j)}:=B_{2j}^{2k+1}(U)$ and $\phi^{(2j)}:=B_{2j+1}^{2k+1}(U)$, and the constants $c_{j,k}$ and $d_{j,k}$ are given by~\eqref{eqn:cjk} and~\eqref{eqn:djk}, respectively.  Moreover, equality holds if and only if $U$ satisfies~\eqref{eqn:intro-dirichlet-problem} and there are constants $a_j,b_\ell\in\bR$ and $\varepsilon_j,\epsilon_\ell\in(0,\infty)$ and points $\xi_j,\zeta_\ell\in\bR^n$ such that
 \begin{align*}
  f^{(2j)}(x) & = a_j\left(\varepsilon_j + \lv x-\xi_j\rv^2\right)^{-\frac{n-2k+4j-1}{2}}, && 0\leq j\leq\lfloor k/2\rfloor, \\
  \phi^{(2\ell)}(x) & = b_\ell\left(\epsilon_\ell + \lv x-\zeta_\ell\rv^2\right)^{-\frac{n-2k+4j+1}{2}}, && 0\leq\ell\leq k-\lfloor k/2\rfloor - 1 .
 \end{align*}
\end{thm}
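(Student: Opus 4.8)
The plan is to combine the energy-form inequality of Theorem~\ref{thm:intro-energy-trace} with Lieb's sharp fractional Sobolev inequalities~\cite{Lieb1983}. First I would record that the hypothesis $n>2k+1$ is exactly what is needed for every fractional order appearing on the right-hand side of~\eqref{eqn:intro-energy-trace} to be admissible: the even summands involve $(-\oDelta)^{\sigma}$ with $\sigma=k-2j+\frac12$ for $0\le j\le\lfloor k/2\rfloor$, the odd summands involve $(-\oDelta)^{\sigma}$ with $\sigma=k-2j-\frac12$ for $0\le j\le k-\lfloor k/2\rfloor-1$, and in every case $0<\tfrac12\le\sigma\le k+\tfrac12<\tfrac n2$. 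For such $\sigma$, Lieb's inequality states that
\[
 \oint_{\bR^n} g\,(-\oDelta)^{\sigma}g\,dx \ge \frac{\Gamma\bigl(\frac{n+2\sigma}{2}\bigr)}{\Gamma\bigl(\frac{n-2\sigma}{2}\bigr)}\Vol(S^n)^{\frac{2\sigma}{n}}\lV g\rV_{\frac{2n}{n-2\sigma}}^2 ,
\]
with equality if and only if $g(x)=a\bigl(\varepsilon+\lv x-\xi\rv^2\bigr)^{-\frac{n-2\sigma}{2}}$ for some $a\in\bR$, $\varepsilon\in(0,\infty)$, and $\xi\in\bR^n$ (the degenerate case $a=0$ included).

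Next I would apply this estimate termwise. Taking $g=f^{(2j)}=B_{2j}^{2k+1}(U)$ and $\sigma=k-2j+\frac12$ gives $\frac{n+2\sigma}{2}=\frac{n+2k-4j+1}{2}$, $\frac{n-2\sigma}{2}=\frac{n-2k+4j-1}{2}$, and $\frac{2n}{n-2\sigma}=\frac{2n}{n-2k+4j-1}$, so each even summand of~\eqref{eqn:intro-energy-trace} is bounded below by the corresponding even summand of~\eqref{eqn:intro-trace}; taking $g=\phi^{(2j)}=B_{2j+1}^{2k+1}(U)$ and $\sigma=k-2j-\frac12$ does the same for the odd summands. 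Since $c_{j,k}$ and $d_{j,k}$ are positive, summing these estimates and inserting them into Theorem~\ref{thm:intro-energy-trace} yields~\eqref{eqn:intro-trace}.

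For the rigidity statement, equality in~\eqref{eqn:intro-trace} forces equality in~\eqref{eqn:intro-energy-trace}---hence, by Theorem~\ref{thm:intro-energy-trace}, $U$ is the unique solution of the Dirichlet problem~\eqref{eqn:intro-dirichlet-problem}---and simultaneously forces equality in every application of Lieb's inequality, hence forces each $f^{(2j)}$ and each $\phi^{(2\ell)}$ to be a bubble of the stated exponent. Conversely, if $U$ solves~\eqref{eqn:intro-dirichlet-problem} then equality holds in~\eqref{eqn:intro-energy-trace}, and if in addition the prescribed Dirichlet data are bubbles of those exponents then equality holds in each Lieb inequality, so equality holds in~\eqref{eqn:intro-trace}. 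Here one uses that the Dirichlet data of~\eqref{eqn:intro-dirichlet-problem} may be prescribed independently and that a bubble of exponent $\frac{n-2\sigma}{2}$ lies in the homogeneous Sobolev space $\dot H^{\sigma}(\bR^n)$, so that~\eqref{eqn:intro-dirichlet-problem} remains solvable with these data and the two equality conditions are compatible.

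I expect the only real work to be bookkeeping: matching the Gamma-function arguments and Lebesgue exponents produced by Lieb's inequality to those written in~\eqref{eqn:intro-trace}, and checking that the admissibility condition $\sigma<\frac n2$ for every term collapses precisely to $n>2k+1$. All of the analytic substance---both the inequality and the characterization of extremals---is already contained in Theorem~\ref{thm:intro-energy-trace} and in Lieb's theorem, so no new estimates are required.
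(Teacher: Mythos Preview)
Your proposal is correct and follows exactly the paper's approach: the paper likewise obtains Theorem~\ref{thm:intro-trace} (as the $\gamma=k+\tfrac12$ case of Theorem~\ref{thm:sobolev-trace}) by applying Lieb's sharp fractional Sobolev inequality termwise to the energy-trace inequality of Theorem~\ref{thm:intro-energy-trace}/Corollary~\ref{cor:energy-trace}. Your write-up is in fact slightly more detailed than the paper's, explicitly verifying the admissibility range $0<\sigma<n/2$, the positivity of $c_{j,k}$ and $d_{j,k}$, and the two directions of the equality characterization.
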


The special case $k=0$ was proven by Escobar~\cite{Escobar1988}; the special case $k=1$ was proven by the author~\cite{Case2015b}; and the special case $k=2$ by Luo and the author~\cite{CaseLuo2018}.  As a special case of Theorem~\ref{thm:intro-trace}, we deduce that
\begin{equation}
 \label{eqn:intro-sobolev}
 \mE_{2k+1}(U) \geq \frac{2^kk!}{(2k-1)!!}\frac{\Gamma(\frac{n+2k+1}{2})}{\Gamma(\frac{n-2k-1}{2})}\Vol(S^n)^{\frac{2k+1}{n}}\lV U(\cdot,0)\rV_{\frac{2n}{n-2k-1}}
\end{equation}
with equality if and only if there are constants $a\in\bR$ and $\varepsilon\in(0,\infty)$ and a point $\xi\in\bR^n$ such that $U$ is the extension by~\eqref{eqn:intro-extension} of
\[ f(x) = a\left(\varepsilon + \lv x-\xi\rv^2\right)^{-\frac{n-2k-1}{2}} . \]
Q.\ Yang~\cite{Yang2019} recently gave a similar proof of~\eqref{eqn:intro-sobolev} which also leads to the corresponding sharp inequality in the Euclidean ball; see also~\cite{AcheChang2015,NgoNguyenPhan2018} for the low-order cases.  We expect the aforementioned conformal covariance of the boundary operators $B_{2j}^{2k+1}$ to lead to the analogue of~\eqref{eqn:intro-trace} in the Euclidean ball.

Theorem~\ref{thm:sobolev-trace} below gives the analogue of Theorem~\ref{thm:intro-trace} which applies to functions in weighted Sobolev spaces.

The replacement of the sharp Sobolev inequality~\eqref{eqn:intro-sobolev} in the critical dimension $n=2k+1$ is the following sharp Lebedev--Milin-type inequality. 

\begin{thm}
 \label{thm:intro-lebedev-milin}
 Let $k\in\bN_0$ and set $n:=2k+1$.  Given any function $U\in\mC_+(\bR_+^{n+1})$, it holds that
 \[ \mE_{n}(U) \geq 2^{2k+1}(2k+1)(k!)^2\Vol(S^{n})\ln\oint_{\bR^{n}} e^{f-\of}\,d\mu , \]
 where $f:=U(\cdot,0)$ and $\of$ is the average of $f$ with respect to $d\mu:=\frac{1}{\Vol(S^n)}\bigl(\frac{2}{1+x^2}\bigr)^{n}dx$.  Moreover, equality holds if and only if $U$ is the solution of~\eqref{eqn:intro-extension} and there are constants $a\in\bR$ and $\varepsilon\in(0,\infty)$ and a point $\xi\in\bR^{n}$ such that
 \[ f(x) = a - \ln\frac{\varepsilon+\lv x-\xi\rv^2}{1+\lv x\rv^2} . \] 
\end{thm}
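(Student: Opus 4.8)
The plan is to obtain the sharp Lebedev--Milin inequality as the borderline $n=2k+1$ limiting case of the sharp Sobolev trace inequality~\eqref{eqn:intro-restricted-energy-trace} together with its conformally covariant reformulation on the sphere. First I would record the key algebraic input from Theorem~\ref{thm:intro-cs}: for $U\in\mC_+$ with $f:=U(\cdot,0)$, inequality~\eqref{eqn:intro-restricted-energy-trace} gives $\mE_{2k+1}(U)\geq \frac{2^kk!}{(2k-1)!!}\oint_{\bR^n} f(-\oDelta)^{k+\frac12}f\,dx$ with equality exactly when $U$ solves~\eqref{eqn:intro-extension}. Since $n=2k+1$, the operator $(-\oDelta)^{k+\frac12}=(-\oDelta)^{n/2}$ is the critical GJMS-type operator $P_n$ on $\bR^n$, whose conformal covariance under the group generated by translations, rotations, and inversions is exactly the scalar covariance encoded in~\eqref{eqn:intro-conformal-Bk}--\eqref{eqn:intro-conformal-Lk} specialized to $B_{2k+1}^{2k+1}$. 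Transferring to the round sphere $S^n$ via stereographic projection $\pi\colon\bR^n\to S^n$, the intertwining relation becomes $P_n^{\bR^n} = e^{-n\psi/2}\,\pi^\ast\!\circ P_n^{S^n}\!\circ\pi_\ast\!\circ e^{n\psi/2}$ where $e^{2\psi}=\bigl(\tfrac{2}{1+|x|^2}\bigr)^2$ is the pullback of the round metric, and $P_n^{S^n}$ is the critical GJMS operator on $S^n$. The conformal primitive of $P_n^{S^n}$ is Branson's $Q$-curvature functional, and the relevant sharp inequality on $S^n$ is the Beckner--Onofri inequality of order $n$: for $w\in C^\infty(S^n)$,
\[
 \frac{1}{2}\oint_{S^n} w\,P_n^{S^n}w\,d\mu_{S^n} + n!\,\Vol(S^n)\,\ow \geq n!\,\Vol(S^n)\,\ln\oint_{S^n} e^{w}\,d\mu_{S^n},
\]
with equality precisely on the conformal orbit of constants.

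Second I would carry out the change of variables carefully. Writing $w:=n!$-normalized boundary data pulled to the sphere, $f = w\circ\pi + \tfrac{n}{2}\psi + c$ for a suitable constant $c$, a direct computation using the intertwining relation shows
\[
 \oint_{\bR^n} f(-\oDelta)^{n/2}f\,dx = \oint_{S^n} w\,P_n^{S^n}w\,d\mu_{S^n} + (\text{linear term in }w) + (\text{constant}),
\]
where the linear term is $2n!\,\Vol(S^n)\,\ow$ up to the additive constant (this follows because $P_n^{S^n}$ annihilates constants and its conformal primitive pairs linearly with $\tfrac n2\psi$). Simultaneously $\oint_{\bR^n}e^{f-\of}\,d\mu$, with $d\mu=\tfrac{1}{\Vol(S^n)}e^{n\psi}\,dx$, becomes exactly $\oint_{S^n}e^{w-\ow}\,d\mu_{S^n}$ up to the same constant. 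Feeding the Beckner--Onofri inequality into~\eqref{eqn:intro-restricted-energy-trace} and tracking the scalar constant $\frac{2^kk!}{(2k-1)!!}$ against $n!$ — using $n!=(2k+1)!=(2k+1)!!\,(2k)!! = (2k+1)!!\,2^k k!$, so that $\frac{2^kk!}{(2k-1)!!}\cdot n! = \frac{2^kk!}{(2k-1)!!}\cdot(2k+1)!!\,2^kk! = 2^{2k}(k!)^2(2k+1)$, and then a factor $2$ from the $\tfrac12$ in Beckner--Onofri — yields the stated constant $2^{2k+1}(2k+1)(k!)^2\Vol(S^n)$. Third, the equality analysis: equality in~\eqref{eqn:intro-restricted-energy-trace} forces $U$ to solve~\eqref{eqn:intro-extension}, and equality in Beckner--Onofri forces $w$ to lie on the conformal orbit of constants on $S^n$, i.e. $w\circ\pi = -\tfrac n2\ln\bigl(\tfrac{\varepsilon+|x-\xi|^2}{1+|x|^2}\bigr)+\text{const}$ after translating back; unwinding $f = w\circ\pi + \tfrac n2\psi + c$ gives precisely $f(x)=a-\ln\tfrac{\varepsilon+|x-\xi|^2}{1+|x|^2}$.

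The main obstacle I anticipate is bookkeeping rather than conceptual: getting every multiplicative and additive constant right through the stereographic change of variables, and in particular verifying that the linear-in-$w$ term produced by $\oint f(-\oDelta)^{n/2}f$ is exactly the average term $\ow$ appearing in Beckner--Onofri (this is where the conformal primitive / cocycle identity for $P_n^{S^n}$ and the fact that $P_n^{S^n}\mathbf 1=0$ must be used with care, since $P_n$ on $S^n$ has a nontrivial zeroth-order term unlike $(-\oDelta)^{n/2}$ on $\bR^n$). A secondary technical point is the density/regularity statement: one must check that the class $\mC_+(\bR_+^{n+1})$ is rich enough that the infimum of $\mE_n$ over $U\in\mC_+$ with fixed trace $f$ equals the right-hand side of~\eqref{eqn:intro-restricted-energy-trace} for all $f$ in the natural Sobolev class $H^{n/2}(\bR^n)$ on which the Beckner--Onofri functional is finite, and that the minimizer—the solution of~\eqref{eqn:intro-extension}—indeed lies in $\mC_+$; this is handled by the same well-posedness and density arguments underlying Theorem~\ref{thm:intro-energy-trace}, so I would simply cite that machinery rather than redo it.
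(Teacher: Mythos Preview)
Your approach is correct and is essentially the same as the paper's: combine the restricted energy-trace inequality~\eqref{eqn:intro-restricted-energy-trace} (equivalently Corollary~\ref{cor:yang}) with the Beckner--Onofri inequality, then match constants via $n!=(2k+1)!!\,2^kk!$ exactly as you do. The only difference is that the paper quotes Beckner's inequality already in its Euclidean form,
\[
 \oint_{\bR^n} f(-\oDelta)^{n/2}f\,dx \;\geq\; 2(n!)\Vol(S^n)\ln\oint_{\bR^n} e^{f-\of}\,d\mu,
\]
with equality characterization, directly from~\cite{Beckner1993}; this renders your stereographic transfer, the intertwining computation, and the linear-in-$w$ bookkeeping entirely unnecessary, and the ``main obstacle'' you anticipate simply does not arise.
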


The special case $k=0$ was proven by Osgood, Phillips and Sarnak~\cite{OsgoodPhillipsSarnak1988}; the special case $k=1$ was proven by Ache--Chang~\cite{AcheChang2015} and the author~\cite{Case2015b}; the special case $k=2$ was proven by Luo and the author~\cite{CaseLuo2018}.  In order to avoid unnecessary redundancies, we have opted to state our sharp Lebedev--Milin inequality for $\mC_+$ only; for general functions, the sharp inequality will also include $L^p$-norms on the boundary data $f^{(2j)}$, $1\leq j\leq\lfloor k/2\rfloor$, and $\phi^{(2j)}$, $0\leq j\leq k-\lfloor k/2\rfloor-1$, as in Theorem~\ref{thm:intro-trace}.

As previously noted, we expect many of our results extend to the compactification $(\oX,\og)$ of a Poincar\'e--Einstein manifolds $(X^{n+1},g_+)$.  Specifically, we expect that there are boundary operators associated to the (weighted) GJMS operators on any Riemannian manifold with boundary, as is already known for the (weighted) conformal Laplacian~\cite{Case2015fi,Cherrier1984,Escobar1988}, the (weighted) Paneitz operator~\cite{Case2015fi,Case2015b}, and, under a mild assumption on the boundary, the sixth-order GJMS operator~\cite{CaseLuo2018}.  However, we only expect a relationship to the fractional GJMS operators as in Theorem~\ref{thm:intro-cs} when there is a Poincar\'e--Einstein metric $g_+$ in the interior, as our argument relies heavily on the factorization of the (weighted) GJMS operators at Einstein metrics~\cite{CaseChang2013,FeffermanGraham2012,Gover2006q}.  In the Poincar\'e--Einstein setting, one can use the recursive formula for the formal solution of the scattering equation~\cite{GrahamZworski2003} to find candidate boundary operators in terms of geodesic compactifications, in the sense that the appropriate generalization of Theorem~\ref{thm:intro-cs} holds.  However, these recursive formulae are not completely explicit (cf.\ \cite{Juhl2013}), making it difficult to prove that the induced Dirichlet form $\mQ$ is symmetric or better understand the geometric content of the boundary operators. 

This article is organized as follows:

In Section~\ref{sec:scattering} we recall the identification of fractional powers of the Laplacian on Euclidean space via scattering theory for the hyperbolic Laplacian in Poincar\'e upper half space~\cite{GrahamZworski2003} and give a direct relationship between powers of the weighted Laplacian in upper half space and weighted GJMS operators in hyperbolic space (cf.\ \cite{CaseChang2013}).

In Section~\ref{sec:boundary} we introduce the boundary operators associated to powers of the weighted Laplacian and study their role in recovering fractional powers of the Laplacian as generalized Dirichlet-to-Neumann operators.  The key results here link our boundary operators to the asymptotics of solutions to a Poisson equation relevant to scattering theory~\cite{GrahamZworski2003} and show that the Dirichlet form determined by our boundary operators is symmetric.

In Section~\ref{sec:conformal} we prove that the boundary operators associated to powers of the weighted Laplacian are conformally covariant with respect to the conformal group of $(\bR_+^{n+1};\bR^n)$.

In Section~\ref{sec:cs} we prove the existence and uniqueness of solutions to the weighted analogue of~\eqref{eqn:intro-dirichlet-problem}, which is a higher-order degenerate elliptic boundary value problem.  This enables us to prove the main result of this section, Theorem~\ref{thm:cs}, which asserts the analogue of Theorem~\ref{thm:intro-cs} for all fractional powers of the Laplacian.

In Section~\ref{sec:trace} we prove various sharp trace inequalities.  There are two main results in this section.  First, Theorem~\ref{thm:dirichlet-principle} asserts a Dirichlet principle for solutions of the Dirichlet problem considered in Theorem~\ref{thm:bvp}.  As a result, we obtain in Corollary~\ref{cor:energy-trace} the analogue of Theorem~\ref{thm:intro-energy-trace} for all fractional powers of the Laplacian.  Second, Theorem~\ref{thm:sobolev-trace} asserts a sharp Sobolev trace inequality which passes through all fractional powers of the Laplacian, generalizing Theorem~\ref{thm:intro-trace}.  The same argument in the critical dimension establishes Theorem~\ref{thm:intro-lebedev-milin}.

\section{Fractional powers of the Laplacian via scattering theory}
\label{sec:scattering}

Conformally covariant pseudodifferential operators with principal symbol that of a fractional Laplacian were defined by Graham and Zworski~\cite{GrahamZworski2003} using scattering theory for the Laplacian of an asymptotically hyperbolic manifold.  In the special case of Euclidean space, these operators are equivalent to the definition of the fractional Laplacian via Fourier transform; see, for example, \cite{ChangGonzalez2011}.  Given our expectation that many of our results generalize to boundaries of AHE manifolds (cf.\ \cite{Case2015fi,Case2015b,CaseLuo2018}), we study fractional powers of the Laplacian via scattering theory.  Here we summarize this construction in the special case of Euclidean space as the conformal boundary of the Poincar\'e upper half space model of hyperbolic space.

Let $(x,y)\in\bR^n\times(0,\infty)=:\bR_+^{n+1}$ denote coordinates in upper half space, regard $\bR^n=\bR^n\times\{0\}$ as the boundary of $\bR_+^{n+1}$, and let $g_+:=y^{-2}(dx^2+dy^2)$ denote the hyperbolic metric on $\bR_+^{n+1}$.  Given $\gamma\in(0,\infty)\setminus\bN$, for each $f\in C^\infty(\bR^n)\cap H^\gamma(\bR^n)$, there is a unique solution $\mP\bigl(\frac{n}{2}+\gamma\bigr)(f)$ of the Poisson equation
\begin{equation}
 \label{eqn:poisson}
 \Delta_{g_+}V + \left(\frac{n^2}{4}-\gamma^2\right)V = 0 
\end{equation}
with $y^{-\frac{n-2\gamma}{2}}\mP\bigl(\frac{n}{2}+\gamma\bigr)(f)(\cdot,y)\to f(\cdot)$ as $y\to0^+$.  See~\cite{DelaTorreGonzalezHyderMartinazzi2018,Yang2019} for a Poisson kernel for~\eqref{eqn:poisson}.  For our purposes, it suffices to know that there are functions $F,G\in C^\infty(\overline{\bR_+^{n+1}})$ such that
\begin{equation}
 \label{eqn:poisson_asymptotics}
 \mP\left(\frac{n}{2}+\gamma\right)(f) = y^{\frac{n-2\gamma}{2}}F + y^{\frac{n+2\gamma}{2}}G
\end{equation}
and $F(\cdot,0)=f$.  Set $S\bigl(\frac{n}{2}+\gamma\bigr)(f):=G(\cdot,0)$.  The \emph{fractional GJMS operator $P_{2\gamma}$ of order $2\gamma$} is
\begin{equation}
 \label{eqn:fractional_gjms_definition}
 P_{2\gamma} := 2^{2\gamma}\frac{\Gamma(\gamma)}{\Gamma(-\gamma)}S\left(\frac{n}{2}+\gamma\right) .
\end{equation}
Graham and Zworski~\cite{GrahamZworski2003} showed that $P_{2\gamma}=(-\Delta)^\gamma$.

The function $F$ in~\eqref{eqn:poisson_asymptotics} is determined modulo $O(y^\infty)$ by $f$ by finding the Taylor series solution to
\[ \left(\Delta_{g_+} + \frac{n^2}{4}-\gamma^2\right)\left(y^{\frac{n-2\gamma}{2}}F\right) = O(y^\infty) \]
with $F(\cdot,0)=f$.  Similarly, the function $G$ is determined modulo $O(y^\infty)$ by $P_{2\gamma}f$ by finding the Taylor series solution to
\[ \left(\Delta_{g_+} + \frac{n^2}{4}-\gamma^2\right)\left(y^{\frac{n+2\gamma}{2}}G\right) = O(y^\infty) \]
with $G(\cdot,0)=S\bigl(\frac{n}{2}+\gamma\bigr)(f)$.  For this reason, we want to understand the formal solutions of $\Delta_{g_+}V+s(n-s)V=0$ when $s=\frac{n}{2}\pm\gamma$, $\gamma\in(0,\infty)\setminus\bN$.

\begin{prop}
 \label{prop:formal_solutions}
 Let $(\bR_+^{n+1},g_+)$ denote $(n+1)$-dimensional hyperbolic space with boundary $(\bR^n,dx^2)$ and let $s\in\bR$.  Given $u\in C^\infty(\bR^n)$, set
 \begin{equation}
  \label{eqn:formal_F}
  U(x,y) := y^s\sum_{j=0}^\infty \frac{\Gamma\bigl(s-\frac{n}{2}+1\bigr)}{2^{2j}j!\Gamma\bigl(j+s-\frac{n}{2}+1\bigr)}y^{2j}(-\oDelta)^ju(x) ,
 \end{equation}
 where $\oDelta=\sum_{i=1}^n\partial_{x_i}^2$.  Then
 \begin{equation}
  \label{eqn:formal_poisson}
  \Delta_{g_+}U + s(n-s)U = O(y^\infty) .
 \end{equation}
\end{prop}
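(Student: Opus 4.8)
The plan is to treat $U$ as a formal power series in $y$ (with coefficients smooth functions of $x$) and to verify \eqref{eqn:formal_poisson} order by order. Write $U = y^s\Phi$ with $\Phi = \sum_{j\ge0} a_j\,y^{2j}(-\oDelta)^j u$ and $a_j := \Gamma(s-\tfrac n2+1)\big/\bigl(2^{2j}j!\,\Gamma(j+s-\tfrac n2+1)\bigr)$, so that $a_0 = 1$. The first step is to record the hyperbolic Laplacian in these coordinates: from $\Delta_{g_+}f = |g_+|^{-1/2}\partial_\mu\bigl(|g_+|^{1/2}g_+^{\mu\nu}\partial_\nu f\bigr)$ with $|g_+|^{1/2} = y^{-(n+1)}$ and $g_+^{\mu\nu} = y^2\delta^{\mu\nu}$ one gets $\Delta_{g_+} = y^2(\oDelta+\partial_y^2) - (n-1)y\,\partial_y$, where $\oDelta = \sum_{i=1}^n\partial_{x_i}^2$.

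Next I would apply $\Delta_{g_+}+s(n-s)$ to a single monomial $y^{s+2j}v(x)$. Since $\partial_y^2\bigl(y^{s+2j}\bigr) = (s+2j)(s+2j-1)y^{s+2j-2}$ and $\partial_y\bigl(y^{s+2j}\bigr) = (s+2j)y^{s+2j-1}$, the $y$-derivative terms together contribute $(s+2j)(s+2j-n)\,y^{s+2j}v$, while the $\oDelta$ term raises the power of $y$ by two; using the identity $(s+2j)(s+2j-n)+s(n-s) = 2j(2s-n+2j)$ this gives
\[ \bigl(\Delta_{g_+}+s(n-s)\bigr)\bigl(y^{s+2j}v\bigr) = y^{s+2j+2}\oDelta v + 2j(2s-n+2j)\,y^{s+2j}v. \]
Summing over $j$ and collecting the coefficient of $y^{s+2j}$ (note the $j=0$ term drops out), the series $U$ solves \eqref{eqn:formal_poisson} to all orders precisely when $2j(2s-n+2j)a_j = a_{j-1}$ for all $j\ge1$, i.e.\ $a_j = \prod_{i=1}^j\frac{1}{2i(2s-n+2i)} = \frac{1}{2^{2j}j!}\prod_{i=1}^j\frac{1}{s-\frac n2+i}$. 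The telescoping identity $\prod_{i=1}^j\bigl(s-\tfrac n2+i\bigr) = \Gamma\bigl(j+s-\tfrac n2+1\bigr)\big/\Gamma\bigl(s-\tfrac n2+1\bigr)$ then reproduces the stated $a_j$, so the recursion is satisfied.

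Finally I would make precise the sense of ``$=O(y^\infty)$''. For the values of $s$ relevant here --- in particular $s = \tfrac n2\pm\gamma$ with $\gamma\in(0,\infty)\setminus\bN$ --- none of the factors $2i(2s-n+2i) = 4i\bigl(i+s-\tfrac n2\bigr)$ vanish, so each $a_j$ is well-defined; but the series need not converge since $u$ is merely smooth. One reads $U$ either as this formal power series or, via Borel's lemma, as an honest function in $C^\infty(\overline{\bR_+^{n+1}})$ with the given asymptotic expansion along $\{y=0\}$; with $U_N$ the truncation at $j=N$, the computation above shows $\bigl(\Delta_{g_+}+s(n-s)\bigr)U_N = -a_N\,y^{s+2N+2}(-\oDelta)^{N+1}u$, which is $O\bigl(y^{s+2N+2}\bigr)$ for every $N$, hence $O(y^\infty)$. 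Every step is an elementary identity, so there is no genuine obstacle; the only points needing care are getting the indicial/recursion algebra exactly right, bookkeeping the Gamma functions, and stating precisely in what sense the (possibly divergent) series is a solution.
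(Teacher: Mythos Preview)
Your proof is correct and follows essentially the same approach as the paper: compute $\Delta_{g_+}$ in coordinates, apply it to the monomials $y^{s+2j}v(x)$, derive the recursion $a_j=\bigl(2j(2s-n+2j)\bigr)^{-1}a_{j-1}$, and solve it via Gamma functions. Your additional remarks on truncations and Borel's lemma make the meaning of $O(y^\infty)$ more explicit than the paper does, but the argument is otherwise the same.
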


\begin{proof}
 A straightforward computation verifies that
 \begin{equation}
  \label{eqn:hyperbolic-laplacian-expression}
  \Delta_{g_+} = y^2\partial_y^2 - (n-1)y\partial_y + y^2\oDelta .
 \end{equation}
 Let $W\in C^\infty(\bR_+^{n+1})$ be such that $\partial_yW=0$.  For any $p\in\bR$ it holds that
 \begin{equation}
  \label{eqn:induction}
  \Delta_{g_+}\left(y^pW\right) = y^p\left( p(p-n)W + y^2\oDelta W \right) .
 \end{equation}
 It follows immediately from~\eqref{eqn:induction} that if $\{c_j\}_{j=0}^\infty$ is the sequence such that
 \begin{equation}
  \label{eqn:recursion}
  c_0=1, \qquad c_j = \frac{1}{2j(2s+2j-n)}c_{j-1} \text{ for $j\geq1$},
 \end{equation}
 then
 \[ U(x,y) = y^s\sum_{j=0}^\infty c_jy^{2j}(-\oDelta)^ju(x) \]
 satisfies~\eqref{eqn:formal_poisson}.  We readily check that the solution to~\eqref{eqn:recursion} is
 \[ c_j = 2^{-2j}\frac{\Gamma\bigl(s-\frac{n}{2}+1\bigr)}{j!\Gamma\bigl(j+s-\frac{n}{2}+1\bigr)} . \qedhere \]
\end{proof}

There are two ways to study the fractional Laplacian via an extension.  The first approach is to identify solutions of~\eqref{eqn:poisson} as elements of the kernel of a second-order weighted Laplacian on Euclidean space (cf.\ \cite{ChangGonzalez2011}):

\begin{lem}
 \label{lem:chang_gonzalez}
 Let $\gamma\in(0,\infty)\setminus\bN$.  Set $m_0:=1-2\gamma$ and define
 \begin{equation}
  \label{eqn:weighted_laplacian}
  \Delta_{m_0} := \Delta + m_0y^{-1}\partial_y .
 \end{equation}
 Then
 \begin{equation}
  \label{eqn:chang_gonzalez}
  \Delta_{m_0} \circ y^{-\frac{n-2\gamma}{2}} \circ \mP\left(\frac{n}{2}+\gamma\right) = 0 .
 \end{equation}
\end{lem}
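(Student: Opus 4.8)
The plan is to reduce the lemma to a single pointwise conjugation identity, namely
\[ \Delta_{m_0}\bigl(y^{-\frac{n-2\gamma}{2}}\phi\bigr) = y^{-\frac{n-2\gamma}{2}-2}\left(\Delta_{g_+}\phi + \Bigl(\tfrac{n^2}{4}-\gamma^2\Bigr)\phi\right) \]
valid for every $\phi\in C^\infty(\bR_+^{n+1})$, and then to apply it with $\phi=\mP\bigl(\frac n2+\gamma\bigr)(f)$, which by definition solves the Poisson equation~\eqref{eqn:poisson}. This is precisely the Chang--Gonz\'alez-type observation that conjugating the Poisson operator equation by the boundary-defining weight converts it into the kernel equation for the Caffarelli--Silvestre operator $\Delta_{m_0}$, so the content of the lemma is exactly this identity.

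First I would write $\Delta = \oDelta + \partial_y^2$ for the flat Laplacian on $\bR_+^{n+1}$ and apply the Leibniz rule: for any $\alpha\in\bR$,
\[ \Delta_{m_0}(y^\alpha\phi) = y^\alpha\Delta\phi + (2\alpha+m_0)\,y^{\alpha-1}\partial_y\phi + \bigl(\alpha(\alpha-1)+m_0\alpha\bigr)\,y^{\alpha-2}\phi . \]
Then I would substitute $\alpha = -\tfrac{n-2\gamma}{2} = \gamma-\tfrac n2$ and $m_0 = 1-2\gamma$ and simplify the two relevant coefficients: $2\alpha+m_0 = 1-n$ and $\alpha(\alpha-1)+m_0\alpha = \alpha(\alpha-1+m_0) = \bigl(\gamma-\tfrac n2\bigr)\bigl(-\gamma-\tfrac n2\bigr) = \tfrac{n^2}{4}-\gamma^2$. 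Factoring $y^{\alpha-2}$ out of the resulting expression leaves $y^2\Delta\phi - (n-1)y\partial_y\phi + \bigl(\tfrac{n^2}{4}-\gamma^2\bigr)\phi$, and comparing with~\eqref{eqn:hyperbolic-laplacian-expression} one recognises $y^2\Delta\phi - (n-1)y\partial_y\phi = \Delta_{g_+}\phi$. This establishes the displayed identity.

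Finally, since $V := \mP\bigl(\frac n2+\gamma\bigr)(f)$ satisfies $\Delta_{g_+}V + \bigl(\tfrac{n^2}{4}-\gamma^2\bigr)V = 0$, the identity immediately gives $\Delta_{m_0}\bigl(y^{-\frac{n-2\gamma}{2}}V\bigr) = 0$, which is~\eqref{eqn:chang_gonzalez}. There is no substantial obstacle here: the entire argument is the coefficient bookkeeping above. The only point requiring a word of care is that the asymptotic decomposition~\eqref{eqn:poisson_asymptotics} guarantees $y^{-\frac{n-2\gamma}{2}}V = F + y^{2\gamma}G$ with $F,G\in C^\infty(\overline{\bR_+^{n+1}})$, so that $y^{-\frac{n-2\gamma}{2}}V$ is smooth on $\bR_+^{n+1}$ and the pointwise manipulations (in particular differentiating the non-integer power $y^{2\gamma}$ away from $y=0$) are legitimate.
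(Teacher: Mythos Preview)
Your proof is correct and follows essentially the same approach as the paper: establish the conjugation identity $\Delta_{m_0}\bigl(y^{-\frac{n-2\gamma}{2}}U\bigr) = y^{-\frac{n-2\gamma+4}{2}}\bigl(\Delta_{g_+}+\tfrac{n^2}{4}-\gamma^2\bigr)U$ by direct computation with~\eqref{eqn:hyperbolic-laplacian-expression}, then apply it to $U=\mP\bigl(\tfrac{n}{2}+\gamma\bigr)(f)$. You have simply made explicit the Leibniz-rule bookkeeping that the paper summarizes as ``a direct computation''.
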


\begin{proof}
 A direct computation using~\eqref{eqn:hyperbolic-laplacian-expression} shows that
 \begin{equation}
  \label{eqn:conf-lapl-transformation}
  \Delta_{m_0}\left(y^{-\frac{n-2\gamma}{2}}U\right) = y^{-\frac{n-2\gamma+4}{2}}\left(\Delta_{g_+}+\frac{n^2}{4}-\gamma^2\right)U
 \end{equation}
 for all $U\in C^\infty(\bR_+^{n+1})$.  The conclusion follows from the definition of the Poisson operator $\mP\bigl(\frac{n}{2}+\gamma\bigr)$.
\end{proof}

The operator~\eqref{eqn:weighted_laplacian} is formally self-adjoint with respect to the measure $y^{m_0}\,dx\,dy$ on $\bR_+^{n+1}$.  However, this measure is only locally finite in $\overline{\bR_+^{n+1}}$ when $\gamma\in(0,1)$, precluding us from using Lemma~\ref{lem:chang_gonzalez} to obtain energy estimates for the fractional Laplacian in terms of interior energy estimates in general.

The second approach to studying the fractional Laplacian via extensions is to identify solutions of~\eqref{eqn:poisson} as elements of the kernel of powers of a weighted Laplacian (cf.\ \cite{ChangYang2017,Yang2013}).  This can be done as follows:

First, let $\gamma\in(0,\infty)\setminus\bN$.  Set $k:=\lfloor\gamma\rfloor+1$ and $m:=1-2[\gamma]$.  The \emph{weighted poly-Laplacian determined by $\gamma$} is
\begin{equation}
 \label{eqn:weighted_polylaplacian}
 L_{2k} := \Delta_m^k,
\end{equation}
where $\Delta_m$ is given by~\eqref{eqn:weighted_laplacian}.

Second, let $\gamma\in(0,\infty)\setminus\bN$.  Set $s:=\frac{n}{2}+\gamma$ and
\[ D_s := \Delta_{g_+} + s(n-s) . \]
The \emph{hyperbolic poly-Poisson operator determined by $\gamma$} is
\begin{equation}
 \label{eqn:hyperbolic_polyharmonic_operator}
 L_{2k}^+ := \prod_{j=0}^{k-1}D_{s-2j} .
\end{equation}

The operators $L_{2k}$ and $L_{2k}^+$ are closely related.  Succinctly, they are the weighted GJMS operators of order $2k$ determined by the smooth metric measure spaces $(\bR_+^{n+1},y^2g_+,y^m\,dx\,dy,m-1)$ and $(\bR_+^{n+1},g_+,\dvol_{g_+},m-1)$, respectively, and hence, by conformal covariance, these operators are the same on densities (see~\cite{CaseChang2013}).  The following two lemmas capture the essential features of this relationship as needed to study sharp Sobolev trace inequalities in upper half space.

First, the factorization~\eqref{eqn:hyperbolic_polyharmonic_operator} is the factorization of weighted GJMS operators~\cite{CaseChang2013}.  However, to understand the boundary operators, it is more useful to write the factorization in a different form.

\begin{lem}
 \label{lem:factorization}
 Let $\gamma\in(0,\infty)\setminus\bN$ and consider the hyperbolic poly-Poisson operator~\eqref{eqn:hyperbolic_polyharmonic_operator}.  Then
 \begin{multline}
  \label{eqn:factorization}
  L_{2k}^+ = \left\{ \prod_{j=0}^{\lfloor\gamma/2\rfloor} \Delta_{g_+} + \left(\frac{n}{2}+\gamma-2j\right)\left(\frac{n}{2}-\gamma+2j\right) \right\} \\ \times \left\{ \prod_{j=0}^{\lfloor\gamma\rfloor-\lfloor\gamma/2\rfloor-1} \Delta_{g_+} + \left( \frac{n}{2}+\lfloor\gamma\rfloor - [\gamma] - 2j \right)\left( \frac{n}{2}-\lfloor\gamma\rfloor+[\gamma]+2j \right) \right\} ,
 \end{multline}
 with the convention that the empty product equals one.
\end{lem}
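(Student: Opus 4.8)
The plan is to show that the two factorizations of $L_{2k}^+$ — the given product $\prod_{j=0}^{k-1} D_{s-2j}$ with $s=\frac n2+\gamma$ and the claimed product on the right-hand side of~\eqref{eqn:factorization} — are equal by verifying that, as a polynomial identity in the single operator variable $\Delta_{g_+}$, the two products have the same roots (with the same multiplicities). Since $D_{s-2j}=\Delta_{g_+}+(s-2j)(n-s+2j)$, each factor in either product is of the form $\Delta_{g_+}+r(n-r)$ for a suitable real number $r$, and the map $r\mapsto r(n-r)$ is two-to-one with $r$ and $n-r$ giving the same value. So the whole argument reduces to a bookkeeping statement: the multiset $\{(s-2j)\bmod\text{(reflection }r\leftrightarrow n-r)\,:\,0\le j\le k-1\}$ coincides with the multiset built from the two bracketed products.

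First I would set $\ell:=\lfloor\gamma\rfloor$ and $t:=[\gamma]=\gamma-\ell$, so that $k=\ell+1$ and the exponents in the left-hand factorization are $s-2j=\frac n2+\gamma-2j$ for $j=0,\dots,\ell$. Splitting the range $0\le j\le\ell$ according to the parity of $j$ is the key move: for $j$ \emph{even}, write $j=2i$ with $0\le i\le\lfloor\ell/2\rfloor=\lfloor\gamma/2\rfloor$, giving factors $\Delta_{g_+}+\bigl(\frac n2+\gamma-4i\bigr)\bigl(\frac n2-\gamma+4i\bigr)$; comparing with the first bracket in~\eqref{eqn:factorization}, whose $j$-th factor is $\Delta_{g_+}+\bigl(\frac n2+\gamma-2j\bigr)\bigl(\frac n2-\gamma+2j\bigr)$, I see the index there runs $0\le j\le\lfloor\gamma/2\rfloor$ and the substitution $j\rightsquigarrow 2i$ is exactly what matches them (so the paper's "$2j$" in that bracket plays the role of my "$4i$"; one should double-check the upper limit $\lfloor\gamma/2\rfloor$ against $\lfloor\ell/2\rfloor$, which agree since $\gamma$ is not an integer). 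For $j$ \emph{odd}, write $j=2i+1$ with $0\le i\le \lceil\ell/2\rceil-1=\ell-\lfloor\ell/2\rfloor-1=\lfloor\gamma\rfloor-\lfloor\gamma/2\rfloor-1$; the corresponding factor is $\Delta_{g_+}+\bigl(\frac n2+\gamma-4i-2\bigr)\bigl(\frac n2-\gamma+4i+2\bigr)$, and using $\gamma-2=\ell+t-2$ together with the reflection $r\leftrightarrow n-r$ one rewrites $\frac n2+\gamma-4i-2$ as $n-\bigl(\frac n2+\ell-t-4i\bigr)$ when... — more honestly: $\frac n2+\gamma-4i-2=\frac n2+(\ell-2)+t-4i$, and I want this to match the second bracket's factor $\Delta_{g_+}+\bigl(\frac n2+\lfloor\gamma\rfloor-[\gamma]-2j\bigr)\bigl(\frac n2-\lfloor\gamma\rfloor+[\gamma]+2j\bigr)=\Delta_{g_+}+\bigl(\frac n2+\ell-t-2j\bigr)\bigl(\frac n2-\ell+t+2j\bigr)$. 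Applying $r\leftrightarrow n-r$ to the odd-$j$ factor turns $\bigl(\frac n2+\gamma-4i-2\bigr)$ into $\bigl(\frac n2-\gamma+4i+2\bigr)=\bigl(\frac n2-(\ell+t)+4i+2\bigr)$, which equals $\frac n2+\ell-t-2\cdot(\ell-2i-1)+\bigl(-2\ell+4i+2+2\ell-4i-2... \bigr)$; I will just carry out this short affine computation explicitly and check it produces $\frac n2+\ell-t-2j'$ with $j'=i$, confirming the match with the second bracket under $j'\rightsquigarrow i$, and that the index range $0\le i\le\lfloor\gamma\rfloor-\lfloor\gamma/2\rfloor-1$ is exactly the stated one.

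The one genuine subtlety — and the step I expect to be the main obstacle — is the reflection bookkeeping for the odd-$j$ factors: one must verify that applying $r\mapsto n-r$ to each odd-indexed exponent $\frac n2+\gamma-2j$ lands it precisely on the exponents appearing in the second bracket (with $\lfloor\gamma\rfloor-[\gamma]$ in place of $\gamma$), and that no exponent is "used twice" or omitted. This is purely a matter of checking an affine relabeling of indices and an affine identity among the exponents; since every factor is linear in $\Delta_{g_+}$ and the quadratic $r(n-r)$ is manifestly invariant under $r\leftrightarrow n-r$, once the index correspondence is pinned down the two products are equal coefficient-by-coefficient as polynomials in $\Delta_{g_+}$, which are then equal as operators. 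The convention that the empty product is $1$ handles the edge case $\lfloor\gamma\rfloor=\lfloor\gamma/2\rfloor$ (e.g. $\gamma\in(0,1)\cup(2,3)$ with no odd $j\ge1$ in range), and nothing else needs to be said.
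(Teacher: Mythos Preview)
Your parity split is the wrong decomposition. Look again at the first bracket in~\eqref{eqn:factorization}: its $j$-th term is $\Delta_{g_+}+\bigl(\tfrac n2+\gamma-2j\bigr)\bigl(\tfrac n2-\gamma+2j\bigr)$, which is \emph{literally} $D_{s-2j}$ with $s=\tfrac n2+\gamma$. So the first bracket is the contiguous initial segment $\prod_{j=0}^{\lfloor\gamma/2\rfloor}D_{s-2j}$ of the defining product, not the even-indexed factors $\{D_s,D_{s-4},\dots\}$. A concrete test: for $\gamma\in(2,3)$ one has $\lfloor\gamma\rfloor=2$, $\lfloor\gamma/2\rfloor=1$, and~\eqref{eqn:factorization} reads $L_{6}^+=(D_sD_{s-2})\cdot D_{s-4}$. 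Your even/odd split would instead group $\{D_s,D_{s-4}\}$ against $\{D_{s-2}\}$. But the odd factor $D_{s-2}$ carries the constant $(\tfrac n2+[\gamma])(\tfrac n2-[\gamma])$, while the sole factor in the second bracket has constant $(\tfrac n2+2-[\gamma])(\tfrac n2-2+[\gamma])$; these are unequal for every $[\gamma]\in(0,1)$. This is exactly why your affine matching for the odd-$j$ block stalls and never closes up. (The cardinalities of the two parity classes do coincide with those of the two brackets, which is likely what misled you.)

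The paper's argument is the one you want: split the index range $0\le j\le k-1$ into a contiguous head $0\le j\le\ell$ and tail $\ell+1\le j\le k-1$, then reverse the tail via $j\mapsto k-1-j$. The reindexed tail factor is $D_{s-2k+2+2j}$, and since
\[
 s-2k+2+2j=\tfrac n2+\gamma-2(\lfloor\gamma\rfloor+1)+2+2j=\tfrac n2-\lfloor\gamma\rfloor+[\gamma]+2j,
\]
the second bracket falls out immediately from the tautology $D_r=\Delta_{g_+}+r(n-r)=\Delta_{g_+}+(n-r)r$. No separate reflection bookkeeping is required.
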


\begin{proof}
 Let $\ell=\lfloor \gamma/2\rfloor$.  Separating~\eqref{eqn:hyperbolic_polyharmonic_operator} into terms with $s-2j>n$ and $s-2j<n$, we compute that
 \begin{align*}
  L_{2k}^+ & = \left\{ \prod_{j=0}^{\ell} D_{s-2j} \right\}\left\{ \prod_{j=\ell+1}^{k-1} D_{s-2j} \right\} \\
  & = \left\{ \prod_{j=0}^{\ell} D_{s-2j} \right\}\left\{ \prod_{j=0}^{k-\ell-2} D_{s-2k+2+2j}\right\} ,
 \end{align*}
 where the second equality follows by reindexing.  Rewriting the latter expression in terms of $\gamma$ yields the desired result.
\end{proof}

Second, elements of the kernel of $D_{s-2j}$ are also in the kernel of $L_{2k}$ when weighted against a suitable power of $y$; this power is precisely the one required by conformal covariance~\cite{CaseChang2013}.  To prove this without appealing to conformal covariance requires the following lemma.

\begin{lem}
 \label{lem:commutator}
 Fix $m\in\bR$ and denote $\Delta_m:=\Delta+my^{-1}\partial_y$.  Then for any $k\in\bN$ it holds that
 \[ \Delta_{m+2k}\Delta_m^{k-1}\Delta_{m-2k} = \Delta_m^{k+1} . \]
\end{lem}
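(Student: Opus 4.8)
The plan is to isolate one operator identity that does all the work. Write $\Theta := y^{-1}\partial_y$, so that $\Delta_m = \Delta + m\Theta$ and hence $\Delta_{m\pm 2k} = \Delta_m \pm 2k\,\Theta$. The first step is the commutator computation
\[ [\Theta, \Delta_m] = [\Theta, \partial_y^2] = 2\Theta^2 , \]
which is a short calculation from $[\partial_y, y^{-1}] = -y^{-2}$: the $x$-derivatives in $\Delta_m$ and the term $m\Theta$ both commute with $\Theta$, so only $\partial_y^2$ contributes. Equivalently, $\Theta\,\Delta_m = (\Delta_m + 2\Theta)\,\Theta = \Delta_{m+2}\,\Theta$, i.e.\ $\Theta$ intertwines $\Delta_m$ with $\Delta_{m+2}$. (This is transparent after the change of variable $t = y^2$, under which $\Theta = 2\partial_t$ becomes an ordinary derivative and $\Delta_m = \sum_i \partial_{x_i}^2 + 4t\partial_t^2 + (2m+2)\partial_t$.)

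Next I would expand the left-hand side. Put $X := \Delta_m$ and $a := 2k$. Then
\[ \Delta_{m+2k}\Delta_m^{k-1}\Delta_{m-2k} = (X+a\Theta)\bigl(X^k - aX^{k-1}\Theta\bigr) = X^{k+1} + a\,[\Theta,X^k] - a^2\,\Theta X^{k-1}\Theta , \]
so that the claimed identity $\Delta_{m+2k}\Delta_m^{k-1}\Delta_{m-2k} = \Delta_m^{k+1}$ is equivalent (using $a = 2k \neq 0$) to
\[ [\Theta, \Delta_m^k] = 2k\,\Theta\,\Delta_m^{k-1}\,\Theta . \]

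Finally I would prove this last identity by iterating the intertwining relation. From $\Theta\,\Delta_m = \Delta_{m+2}\,\Theta$ one gets $\Theta\,\Delta_m^{\,j} = \Delta_{m+2}^{\,j}\,\Theta$ for every $j\geq 0$, hence $[\Theta, \Delta_m^k] = \bigl(\Delta_{m+2}^k - \Delta_m^k\bigr)\Theta$. Writing the difference as a telescoping sum and using $\Delta_{m+2} - \Delta_m = 2\Theta$ together with the intertwining relation once more,
\[ \Delta_{m+2}^k - \Delta_m^k = \sum_{j=0}^{k-1} \Delta_{m+2}^{\,j}\,(2\Theta)\,\Delta_m^{\,k-1-j} = 2\sum_{j=0}^{k-1}\Delta_{m+2}^{\,j}\,\Delta_{m+2}^{\,k-1-j}\,\Theta = 2k\,\Delta_{m+2}^{\,k-1}\,\Theta , \]
so that $[\Theta, \Delta_m^k] = 2k\,\Delta_{m+2}^{\,k-1}\,\Theta^2 = 2k\,\Theta\,\Delta_m^{\,k-1}\,\Theta$, which is exactly what is needed.

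There is no serious obstacle; the computation is elementary once organized this way. The two points requiring care are getting the commutator exactly right — it is $2\Theta^2$, not merely a second-order operator, and this exactness is what makes everything close up — and bookkeeping the operator orderings in the telescoping, so that each $\Theta$ crossing a factor of $\Delta_m$ is correctly replaced by the same crossing of $\Delta_{m+2}$. As a sanity check I would verify the case $k=1$ directly, where the scheme reduces to $(X+2\Theta)(X-2\Theta) = X^2 + 2[\Theta,X] - 4\Theta^2 = X^2$.
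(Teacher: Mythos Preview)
Your proof is correct and cleaner than the paper's. Both begin from the same basic commutator $[\Theta,\Delta_m]=2\Theta^2$ (equivalently $[\Delta_m,\Theta]=-2\Theta^2$) and both reduce the lemma to the identity $[\Theta,\Delta_m^k]=2k\,\Theta\,\Delta_m^{k-1}\Theta$. The difference lies in how this commutator with a power is computed. The paper expands $[\Delta_m^k,\Theta]=-2\sum_{j}\Delta_m^j\Theta^2\Delta_m^{k-1-j}$ by Leibniz, then rewrites each summand by pulling one $\Theta$ to each side, producing the desired $-2k\,\Theta\Delta_m^{k-1}\Theta$ plus a residual double-commutator sum $\sum_j\bigl[[\Delta_m^j,\Theta],\Theta\Delta_m^{k-1-j}\bigr]$; showing this residual vanishes requires the Jacobi identity and a symmetry argument. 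You instead recast the commutator as the intertwining relation $\Theta\Delta_m=\Delta_{m+2}\Theta$, which immediately gives $\Theta\Delta_m^k=\Delta_{m+2}^k\Theta$ and turns the problem into evaluating $\Delta_{m+2}^k-\Delta_m^k$; a single telescoping step together with one more application of intertwining finishes it. Your route avoids the Jacobi manipulations entirely and makes the mechanism transparent: the whole identity is the statement that conjugation by $\Theta$ shifts the parameter $m$ by $2$. The paper's approach stays inside the single algebra generated by $\Delta_m$ and $\Theta$, which may generalize more readily to settings where no parameter shift is available, but for the present lemma your argument is the more efficient one.
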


\begin{proof}
 First observe the commutator identity
 \begin{equation}
  \label{eqn:basic-commutator}
  [\Delta_m,y^{-1}\partial_y] = -2(y^{-1}\partial_y)^2 . 
 \end{equation}
 On the other hand, it follows from the definition of $\Delta_m$ that
 \begin{equation}
  \label{eqn:pre-commutator}
  \Delta_{m+2k}\Delta_m^{k-1}\Delta_{m-2k} = \Delta_m^{k+1} - 2k[\Delta_m^k,y^{-1}\partial_y] - 4k^2y^{-1}\partial_y\Delta_m^{k-1}y^{-1}\partial_y . 
 \end{equation}
 It thus suffices to show that $[\Delta_m^k,y^{-1}\partial_y] = -2ky^{-1}\partial_y\Delta_m^{k-1}y^{-1}\partial_y$ for all $k \in \bN$.
 The proof is by induction:
 
 Denote $Y := y^{-1}\partial_y$.
 Suppose $k \in \bN$ is such that $[\Delta_m^k,Y] = -2kY \Delta_m^{k-1} Y$.
 Then
 \begin{align*}
  [\Delta_m^{k+1},Y] & = \Delta_m[\Delta_m^k,Y] + [\Delta_m,Y]\Delta_m^k \\
   & = -2k\Delta_m Y \Delta_m^{k-1} Y - 2Y^2\Delta_m^k \\
   & = -2(k+1)Y\Delta_m^kY - 2k[\Delta_m,Y]\Delta_m^{k-1}Y - 2Y[Y,\Delta^m] \\
   & = -2(k+1)Y\Delta_m^kY . \qedhere
 \end{align*}
\end{proof}

We now prove that elements of the kernel of $D_{s-2j}$ are also in the kernel of $L_{2k}$ when weighted against a suitable power of $y$.

\begin{lem}
 \label{lem:kernel}
 Let $\gamma\in(0,\infty)\setminus\bN$.  Set $k:=\lfloor\gamma\rfloor+1$ and $m:=1-2[\gamma]$.  Denote
 \[ \mI^{2\gamma} := \left\{ \gamma-2j \suchthat 0\leq j\leq \lfloor\gamma/2\rfloor \right\} \cup \left\{ \lfloor\gamma\rfloor - [\gamma] - 2j \suchthat 0 \leq j\leq \lfloor\gamma\rfloor - \lfloor\gamma/2\rfloor - 1 \right\} \]
 For each $\cgamma\in\mI^{2\gamma}$ it holds that
 \[ L_{2k}\circ y^{-\frac{n-2\gamma}{2}} \circ \mP\left(\frac{n}{2}+\cgamma\right) = 0 . \]
\end{lem}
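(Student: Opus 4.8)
The plan is to reduce everything to the two already-established facts: Lemma~\ref{lem:chang_gonzalez}, which handles the single weighted Laplacian $\Delta_{m_0}$, and Lemma~\ref{lem:commutator}, which splices shifted weighted Laplacians back together into a power of a fixed one. First I would fix $\cgamma\in\mI^{2\gamma}$ and write $s:=\frac{n}{2}+\cgamma$; the function $V:=\mP\bigl(\frac{n}{2}+\cgamma\bigr)(f)$ satisfies $D_sV=0$, i.e. $\bigl(\Delta_{g_+}+s(n-s)\bigr)V=0$. The conformal transformation identity~\eqref{eqn:conf-lapl-transformation}, applied not with exponent $\frac{n-2\gamma}{2}$ but with the exponent naturally matched to $\cgamma$, shows that $y^{-\frac{n-2\cgamma}{2}}V$ lies in the kernel of the second-order weighted Laplacian $\Delta_{1-2\cgamma}$. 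So the first point to record is: for each $\cgamma$, Lemma~\ref{lem:chang_gonzalez} (with $\gamma$ replaced by $\cgamma$, which is legitimate even when $\cgamma\le 0$ since the computation~\eqref{eqn:conf-lapl-transformation} is purely formal) gives $\Delta_{1-2\cgamma}\bigl(y^{-\frac{n-2\cgamma}{2}}V\bigr)=0$.

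The second step is to connect the \emph{target} weight $y^{-\frac{n-2\gamma}{2}}$, which appears in the statement, to the weight $y^{-\frac{n-2\cgamma}{2}}$ just produced. Since $-\frac{n-2\gamma}{2}=-\frac{n-2\cgamma}{2}+(\gamma-\cgamma)$, and $\gamma-\cgamma$ is a (non-negative, by inspection of $\mI^{2\gamma}$) integer — in fact $\gamma-\cgamma\in 2\bZ$ when $\cgamma=\gamma-2j$ and $\gamma-\cgamma=[\gamma]-\lfloor\gamma\rfloor+[\gamma]+2j+\cdots$ is again even when $\cgamma$ comes from the second family — I would verify the elementary identity relating $\Delta_m$ acting on $y^tW$ to $\Delta_{m+2t}$ acting on $W$, namely $\Delta_m\circ y^t = y^t\circ\Delta_{m-2t}$ as operators (this is immediate from the definition $\Delta_m=\Delta+my^{-1}\partial_y$ and the Leibniz rule, and it is exactly the shift already used implicitly in Lemma~\ref{lem:commutator}). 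Feeding $t=\gamma-\cgamma$ through this identity converts the annihilation statement $\Delta_{1-2\cgamma}\bigl(y^{-\frac{n-2\cgamma}{2}}V\bigr)=0$ into $\Delta_{1-2\cgamma-2(\gamma-\cgamma)}\bigl(y^{-\frac{n-2\gamma}{2}}V\bigr)=\Delta_{1-2\gamma}\bigl(y^{-\frac{n-2\gamma}{2}}V\bigr)$ being — wait, one must be careful: the bare conclusion is that $y^{-\frac{n-2\gamma}{2}}V$ is annihilated by a \emph{shifted} operator $\Delta_{m'}$ with $m'=1-2\gamma+2(\gamma-\cgamma)\cdot(\text{sign})$; I would pin down the exact index bookkeeping here. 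Call the resulting statement: $\Delta_{m_{\cgamma}}\bigl(y^{-\frac{n-2\gamma}{2}}V\bigr)=0$ for an explicit $m_{\cgamma}\in\{m-2k+2,\,m-2k+4,\,\dots,\,m+2k-2\}$, one index for each $\cgamma\in\mI^{2\gamma}$.

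The third and final step is to climb from $\Delta_{m_{\cgamma}}$ back up to $L_{2k}=\Delta_m^k$. Here is where Lemma~\ref{lem:commutator} does the work: iterating it gives a factorization of $\Delta_m^k$ as a composition $\Delta_m^k = \Delta_{m+2k-2}\circ(\text{middle factors})\circ\Delta_{m-2k+2}$ and, more flexibly, as a product in which one of the factors is any prescribed $\Delta_{m_{\cgamma}}$ with $m_{\cgamma}$ in the arithmetic progression above. Concretely I expect to prove, by a short induction using Lemma~\ref{lem:commutator}, that for each admissible shifted index $m'$ one can write $\Delta_m^k = A\circ\Delta_{m'}$ for some differential operator $A$ (with $A$ a product of $k-1$ weighted Laplacians of appropriate indices). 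Applying this with $m'=m_{\cgamma}$ to $U:=y^{-\frac{n-2\gamma}{2}}V=y^{-\frac{n-2\gamma}{2}}\mP\bigl(\frac{n}{2}+\cgamma\bigr)(f)$ and using Step 2, $L_{2k}U = A\bigl(\Delta_{m_{\cgamma}}U\bigr)=A(0)=0$, which is the claim. The main obstacle I anticipate is purely combinatorial: checking that the family $\{m_{\cgamma}:\cgamma\in\mI^{2\gamma}\}$ obtained in Step 2 really lands inside the progression $\{m-2k+2,\dots,m+2k-2\}$ for which Step 3 works — this amounts to matching the two index sets in $\mI^{2\gamma}$ (the $\gamma-2j$ family and the $\lfloor\gamma\rfloor-[\gamma]-2j$ family) against the even/odd splitting $m=1-2[\gamma]$, $k=\lfloor\gamma\rfloor+1$, and is exactly the same bookkeeping that underlies the two-bracket factorization in Lemma~\ref{lem:factorization}. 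Once that dictionary is written out, each case is a one-line verification.
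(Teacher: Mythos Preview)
Your Step~2 contains a genuine error that cannot be fixed within the proposed framework. The conjugation identity you invoke, $\Delta_m\circ y^t = y^t\circ\Delta_{m\mp 2t}$, is false: a direct Leibniz computation gives
\[
\Delta_m\bigl(y^tW\bigr) = y^t\Delta_{m+2t}W + t(t+m-1)y^{t-2}W,
\]
with a nonvanishing zero-order remainder unless $t=0$ or $t=1-m$. Consequently the central claim of Step~2 --- that $U:=y^{-\frac{n-2\gamma}{2}}V$ is annihilated by \emph{some} single second-order $\Delta_{m_{\cgamma}}$ --- is simply not true. For a concrete counterexample take $\gamma=\tfrac32$ (so $k=2$, $m=0$) and $\cgamma=\tfrac12\in\mI^{2\gamma}$: setting $W:=y^{-\frac{n-1}{2}}V$ one has $\Delta W=0$ and $U=yW$, and then $\Delta_{2}U=4\partial_yW+2y^{-1}W$, $\Delta_{-2}U=-2y^{-1}W$, and $\Delta_0 U=2\partial_yW$, none of which vanish for generic data. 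Since Step~3 is predicated on having a rightmost factor $\Delta_{m_{\cgamma}}$ that kills $U$, the argument collapses here. (Incidentally, even the Step~3 factorization $\Delta_m^k = A\circ\Delta_{m'}$ for an \emph{arbitrary} $m'$ in the progression is not available: for $k=2$, $m=0$ one has $\Delta_{-2}\Delta_2 = \Delta_0^2 - 8(y^{-1}\partial_y)^2\neq\Delta_0^2$, so $\Delta_2$ cannot be moved to the right.)

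The paper sidesteps this by never seeking a second-order annihilator of $U$. Instead it proves the operator identity
\[
L_{2k}\circ y^{-\frac{n-2\gamma}{2}} \;=\; y^{-\frac{n-2\gamma+4k}{2}}\,L_{2k}^{+}
\]
as a whole: iterate~\eqref{eqn:conf-lapl-transformation} once for each of the $k$ factors $D_{s-2j}$ of $L_{2k}^{+}$ (the zero-order remainders are precisely absorbed into the scattering constants $s(n-s)$ at each step), obtaining $\prod_{j=0}^{k-1}\Delta_{m-2k+4j+2}$ on the left, and then use Lemma~\ref{lem:commutator} inductively to collapse that ordered product to $\Delta_m^k$. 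Since $L_{2k}^{+}$ annihilates $V$ by Lemma~\ref{lem:factorization}, the lemma follows. The moral is that the passage from $\cgamma$ to $\gamma$ must go through the full $k$-fold product, not a single shifted Laplacian.
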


\begin{proof}
 It is clear from Lemma~\ref{lem:factorization} that
 \begin{equation}
  \label{eqn:hyperbolic-annihilator}
  L_{2k}^+ \circ \mP\left(\frac{n}{2}+\cgamma\right) = 0
 \end{equation}
 for each $\cgamma\in\mI^{2\gamma}$.  Thus it suffices to prove that
 \begin{equation}
  \label{eqn:gjms-transformation}
  L_{2k}\circ y^{-\frac{n-2\gamma}{2}} = y^{-\frac{n-2\gamma+4k}{2}}L_{2k}^+ .
 \end{equation}
 To that end, observe that~\eqref{eqn:hyperbolic_polyharmonic_operator} and a repeated application of~\eqref{eqn:conf-lapl-transformation} implies that
 \begin{equation}
  \label{eqn:pretransformation}
  y^{-\frac{n-2\gamma+4k}{2}}L_{2k}^+ = \left(\prod_{j=0}^{k-1}\Delta_{m-2k+4j+2}\right)\circ y^{-\frac{n-2\gamma}{2}} ,
 \end{equation}
 where
 \[ \prod_{j=0}^{k-1}\Delta_{m-2k+4j+2} := \Delta_{m+2k-2}\circ\Delta_{m+2k-6}\circ\dotsb\circ\Delta_{m-2k+2} . \]
 An induction argument using Lemma~\ref{lem:commutator} implies that
 \[ \prod_{j=0}^{k-1} \Delta_{m-2k+4j+2} = \Delta_m^k . \]
 Inserting this into~\eqref{eqn:pretransformation} yields~\eqref{eqn:gjms-transformation}
\end{proof}

\section{Boundary operators in upper half space}
\label{sec:boundary}

In this section we introduce the boundary operators associated to the weighted poly-Laplacian $L_{2k}$ determined by $\gamma\in(0,\infty)\setminus\bN$.  By Lemma~\ref{lem:kernel}, the kernel of $L_{2k}$ contains solutions of the Poisson equation~\eqref{eqn:poisson} for any $\cgamma\in\mI^{2\gamma}$.  Our boundary operators are designed to pick out the functions $F(\cdot,0)$ and $G(\cdot,0)$ of solutions to~\eqref{eqn:poisson}.  They also give rise to formally self-adjoint boundary value problems; in fact, Theorem~\ref{thm:symmetry} gives a stronger result.  To that end, it is convenient to introduce the space
\[ \mC^{2\gamma} = C_{\mathrm{even}}^\infty(\overline{\bR_+^{n+1}}) + y^{2[\gamma]}C_{\mathrm{even}}^{\infty}(\overline{\bR_+^{n+1}}) \]
associated to a given $\gamma\in(0,\infty)\setminus\bN$, where $C_{\mathrm{even}}^{\infty}(\overline{\bR_+^{n+1}})$ denotes the space of smooth functions on $\overline{\bR_+^{n+1}}$ whose Taylor series expansions in $y$ at $y=0$ contain only even terms.  Note that 
\begin{enumerate}
 \item if $\gamma\in\frac{1}{2}+\bN_0$, then $\mC^{2\gamma}=C^\infty(\overline{\bR_+^{n+1}})$; and
 \item for any $\cgamma\in\mI^{2\gamma}$, it holds that $\mP\bigl(\frac{n}{2}+\cgamma\bigr)\colon C^\infty(\bR^n)\cap H^{\cgamma}(\bR^n) \to \mC^{2\gamma}$.
\end{enumerate}
The second point means that the space $\mC^\gamma$ is well-suited to studying all of the scattering problems formed from the factors of the hyperbolic poly-Poisson operator~\eqref{eqn:hyperbolic_polyharmonic_operator}.  A similar definition of $\mC^{2\gamma}$ should be made for compactifications of asymptotically hyperbolic manifolds; see~\cite{Case2015fi} for a discussion of the case $\gamma\in(0,1)\cup(1,2)$.

As in the introduction, let $\iota^\ast\colon\mC^{2\gamma}\to C^\infty(\bR^n)$ denote the restriction operator, $(\iota^\ast U)(x)=U(x,0)$.  Our boundary operators are elements of the set
\begin{multline}
 \label{eqn:boundary_operators}
 \mB^{2\gamma} := \left\{ B_{2j}^{2\gamma}\colon\mC^{2\gamma}\to C^\infty(\bR^n) \suchthat 0\leq j\leq\lfloor\gamma\rfloor \right\} \\ \cup \left\{ B_{2[\gamma]+2j}^{2\gamma}\colon\mC^{2\gamma}\to C^\infty(\bR^n) \suchthat 0\leq j\leq \lfloor\gamma\rfloor \right\}
\end{multline}
defined as follows:

\begin{defn}
 \label{defn:operators}
 Let $\gamma\in(0,\infty)\setminus\bN$ and set $m:=1-2[\gamma]$.  Given $0\leq j\leq\lfloor\gamma\rfloor$, we define $B_{2j}^{2\gamma}\colon\mC^{2\gamma}\to C^\infty(\bR^n)$ recursively by  
 \[ B_{2j}^{2\gamma} = (-1)^j\iota^\ast\circ T^j - \sum_{\ell=1}^j \binom{j}{\ell}\frac{\Gamma(1+j-[\gamma])\Gamma(1+2j-2\ell-\gamma)}{\Gamma(1+j-\ell-[\gamma])\Gamma(1+2j-\ell-\gamma)}\oDelta^\ell B_{2j-2\ell}^{2\gamma} , \]
 where $T:=\partial_y^2+my^{-1}\partial_y$ and the empty sum equals zero by convention.  Likewise, given $0\leq j\leq\lfloor\gamma\rfloor$, we define $B_{2[\gamma]+2j}^{2\gamma}\colon\mC^{2\gamma}\to C^\infty(\bR^n)$ recursively by
 \begin{multline*}
  B_{2[\gamma]+2j}^{2\gamma} = (-1)^{j+1}\iota^\ast\circ y^m\partial_yT^j \\ - \sum_{\ell=1}^j \binom{j}{\ell}\frac{\Gamma(1+j+[\gamma])\Gamma(1+2j-2\ell-\lfloor\gamma\rfloor+[\gamma])}{\Gamma(1+j-\ell+[\gamma])\Gamma(1+2j-\ell-\lfloor\gamma\rfloor+[\gamma])}\oDelta^\ell B_{2[\gamma]+2j-2\ell}^{2\gamma} .
 \end{multline*}
\end{defn}

It is straightforward to show that any operator $B_{2\alpha}^{2\gamma}\in\mB^{2\gamma}$ is a homogeneous differential operator of degree $2\alpha$ which can be written as a polynomial in $\oDelta$, $T:=\partial_y^2+my^{-1}\partial_y$, and $y^m\partial_y$; in fact, it is a polynomial in $\oDelta$ and $T$ when $\alpha\in\bN_0$ and the composition of $y^m\partial_y$ with such a polynomial when $\alpha\not\in\bN_0$.  Moreover, the leading coefficient --- in the sense that it corresponds to the term in which $\oDelta$ does not appear --- is $\pm1$.  These properties are relevant for computing the energy associated to $L_{2k}$ and the boundary operators $\mB^{2\gamma}$; see Section~\ref{sec:trace} for details.

The first goal of this section is to show that the boundary operators $\mB^{2\gamma}$ are relevant for picking out the Dirichlet data $F(\cdot,0)$ and the Neumann data $G(\cdot,0)$ of solutions of the Poisson equation $\mP\bigl(\frac{n}{2}+\cgamma\bigr)$ for $\cgamma\in\mI^{2\gamma}$.  This is accomplished by the following two propositions.

\begin{prop}
 \label{prop:reln-to-scattering}
 Let $\gamma\in(0,\infty)\setminus\bN$.  Given $0\leq j\leq\lfloor\gamma/2\rfloor$, set $\cgamma:=\gamma-2j$.  Let $V=y^{-\frac{n-2\gamma}{2}}\mP\bigl(\frac{n}{2}+\cgamma\bigr)f$ for some $f\in C^\infty(\bR^n)\cap H^{\cgamma}(\bR^n)$.  It holds that
 \begin{align}
  \label{eqn:reln-to-scattering-fn} B_{2j}^{2\gamma}(V) & = (-1)^j2^{2j}j!\frac{\Gamma(1+j-[\gamma])}{\Gamma(1-[\gamma])}f, \\
  \label{eqn:reln-to-scattering-sfn} B_{2\gamma-2j}^{2\gamma}(V) & = (-1)^{\lfloor\gamma\rfloor-j+1}2^{2\lfloor\gamma\rfloor-2j+1}(\lfloor\gamma\rfloor-j)!\frac{\Gamma(1-j+\gamma)}{\Gamma([\gamma])}\hf,
 \end{align}
 where $\hf:=S\bigl(\frac{n}{2}+\cgamma\bigr)f$.  Moreover, $B_{2\alpha}^{2\gamma}(V)=0$ for all $B_{2\alpha}^{2\gamma}\in\mB^{2\gamma}\setminus\{B_{2j}^{2\gamma},B_{2\gamma-2j}^{2\gamma}\}$.
\end{prop}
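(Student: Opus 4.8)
The plan is to exploit the explicit form of the formal solutions given by Proposition~\ref{prop:formal_solutions} together with the recursive definition of the boundary operators in Definition~\ref{defn:operators}. First I would note that since $V = y^{-\frac{n-2\gamma}{2}}\mP\bigl(\frac{n}{2}+\cgamma\bigr)f$ and $\mP\bigl(\frac{n}{2}+\cgamma\bigr)f = y^{\frac{n-2\cgamma}{2}}F + y^{\frac{n+2\cgamma}{2}}G$ with $F(\cdot,0)=f$ and $G(\cdot,0) = S\bigl(\frac{n}{2}+\cgamma\bigr)f =: \hf$, multiplying by $y^{-\frac{n-2\gamma}{2}}$ and using $\cgamma = \gamma - 2j$ gives $V = y^{2j}\widetilde F + y^{2\gamma - 2j}\widetilde G$ where $\widetilde F, \widetilde G \in C^\infty_{\mathrm{even}}(\overline{\bR_+^{n+1}})$ have the Taylor coefficients dictated by Proposition~\ref{prop:formal_solutions} (with $s = \frac{n}{2}+\cgamma$ and $s = \frac{n}{2}-\cgamma$ respectively, after the power shift). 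In particular $\widetilde F(\cdot,0) = f$ and $\widetilde G(\cdot,0) = \hf$, and the two summands contribute to disjoint pieces of the expansion $\mC^{2\gamma} = C^\infty_{\mathrm{even}} + y^{2[\gamma]}C^\infty_{\mathrm{even}}$ because $2j \in 2\bN_0$ while $2\gamma - 2j \equiv 2[\gamma] \pmod{2}$ in the appropriate sense.

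Next I would compute the action of $T = \partial_y^2 + my^{-1}\partial_y$ on pure powers: $T(y^p W) = y^{p-2}(p(p+m-1)W) + y^p(\partial_y^2 W + my^{-1}\partial_y W)$ for $y$-independent $W$, analogous to~\eqref{eqn:induction}. Iterating, $T^j$ applied to $y^{2j}\widetilde F$ produces a leading term $\iota^\ast(T^j(y^{2j}\widetilde F))$ that is a constant times $f$ plus $\oDelta$-derivative terms coming from the $\oDelta$ appearing in the recursion relation~\eqref{eqn:recursion} for the coefficients of $\widetilde F$; the constant is computed by tracking the product $\prod_{i=1}^{j} (2i)(2i + m - 1) = \prod_{i=1}^j 2i(2i - 2[\gamma]) = 2^{2j} j! \frac{\Gamma(1+j-[\gamma])}{\Gamma(1-[\gamma])}$. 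This is exactly the claimed constant in~\eqref{eqn:reln-to-scattering-fn} up to the sign $(-1)^j$ from the $(-1)^j\iota^\ast\circ T^j$ in Definition~\ref{defn:operators}. The subtraction terms in the definition of $B_{2j}^{2\gamma}$ involving $\oDelta^\ell B_{2j-2\ell}^{2\gamma}(V)$ are designed — and this is the content of the recursion — precisely to cancel all the lower-order $\oDelta$-contributions from both $\widetilde F$ and $\widetilde G$; one checks by induction on $j$ that $B_{2j-2\ell}^{2\gamma}(V) = 0$ for $\ell \geq 1$ (since those operators land outside $\{B_{2j}^{2\gamma}, B_{2\gamma-2j}^{2\gamma}\}$) so that the subtraction terms vanish outright once the vanishing claim is established, or alternatively the coefficients are matched so the non-leading parts telescope. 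The analogous computation with $y^m\partial_y T^j$ acting on $y^{2\gamma-2j}\widetilde G$ yields~\eqref{eqn:reln-to-scattering-sfn}, where the relevant product becomes $\prod$ of factors $(2i + 2\gamma - 2j - 2[\gamma]) = \prod$ of factors $2(\lfloor\gamma\rfloor - j + \text{shift})$, giving $2^{2\lfloor\gamma\rfloor - 2j + 1}(\lfloor\gamma\rfloor - j)! \frac{\Gamma(1-j+\gamma)}{\Gamma([\gamma])}$, with the sign $(-1)^{\lfloor\gamma\rfloor - j + 1}$ accounting for the $(-1)^{j+1}$ prefactor together with reindexing.

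The vanishing statement $B_{2\alpha}^{2\gamma}(V) = 0$ for $B_{2\alpha}^{2\gamma} \notin \{B_{2j}^{2\gamma}, B_{2\gamma-2j}^{2\gamma}\}$ is handled by the same mechanism: for an "even" operator $B_{2i}^{2\gamma}$ with $i \neq j$, the leading term $(-1)^i \iota^\ast T^i$ applied to $y^{2j}\widetilde F + y^{2\gamma-2j}\widetilde G$ vanishes at $y=0$ when $i < j$ (not enough derivatives to reach the constant term — the power $y^{2j-2i}$ survives) and, when $i > j$, the formal-solution property $\Delta_{g_+}U + \cgamma$-stuff $= O(y^\infty)$ translates via~\eqref{eqn:conf-lapl-transformation} into $\Delta_{m_0} V$-type relations that force the relevant combination to vanish; more cleanly, one uses that $V$ being (up to the weight) a formal solution means $T^i$ acting on it reduces modulo $\oDelta$-terms, and the recursion was built so that $B_{2i}^{2\gamma}(V)$ is the obstruction to solvability at that order, which is zero by construction of $\mP$. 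I would organize this as a single induction on the index of the boundary operator, establishing simultaneously the formulas~\eqref{eqn:reln-to-scattering-fn}, \eqref{eqn:reln-to-scattering-sfn} and the vanishing. The main obstacle I anticipate is the careful bookkeeping of the two interleaved families of coefficients — matching the $\Gamma$-factors in Definition~\ref{defn:operators} against those produced by Proposition~\ref{prop:formal_solutions} — and in particular verifying that the $\oDelta$-cross-terms between the $y^{2j}\widetilde F$ piece and the $y^{2\gamma-2j}\widetilde G$ piece do not interfere; this requires knowing that the coefficients $c_\ell$ from~\eqref{eqn:recursion}, for the two values of $s$ differing by $2\cgamma$, are annihilated in the right combinations by the binomial-$\Gamma$ weights in the recursion, which is a finite but delicate $\Gamma$-function identity.
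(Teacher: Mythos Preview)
Your approach is essentially the paper's: write out the expansion of $V$ from Proposition~\ref{prop:formal_solutions}, compute $T$ on pure powers of $y$ (the paper records $T(y^{2\ell})=4\ell(\ell-[\gamma])y^{2\ell-2}$ and $T(y^{2[\gamma]+2\ell})=4\ell(\ell+[\gamma])y^{2[\gamma]+2\ell-2}$), and then induct on the boundary-operator index. One simplification you are missing: there are no cross-terms to worry about, because $\iota^\ast\circ T^\ell$ kills the $y^{2[\gamma]}C^\infty_{\mathrm{even}}$ summand while $\iota^\ast\circ y^m\partial_y T^\ell$ kills the $C^\infty_{\mathrm{even}}$ summand, so the $\widetilde F$ and $\widetilde G$ pieces decouple completely and the ``delicate $\Gamma$-identity'' you anticipate reduces, in the $i>j$ case, to a single explicit cancellation between $(-1)^i\iota^\ast T^i(V)$ and the one nonvanishing subtraction term $\oDelta^{\,i-j}B_{2j}^{2\gamma}(V)$, using the value of $B_{2j}^{2\gamma}(V)$ already established.
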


\begin{proof}
 To begin, note that
 \begin{equation}
  \label{eqn:basic-rules}
  \begin{split}
   T\left(y^{2j}\right) & = 4j(j-[\gamma])y^{2j-2}, \\
   T\left(y^{2[\gamma]+2j}\right) & = 4j(j+[\gamma])y^{2[\gamma]+2j-2}
  \end{split}
 \end{equation}
 for all $j\in\bN_0$.
 
 By~\eqref{eqn:poisson_asymptotics}, it holds that
 \[ \mP\left(\frac{n}{2}+\cgamma\right)f = U + \widehat{U} , \]
 where $U$ solves $y^{-s}U\to f$ as $y\to0$ and $\Delta_{g_+}U+s(n-s)U=O(y^\infty)$ with $s=\frac{n}{2}-\cgamma$ and $\widehat{U}$ solves $y^{-\widehat{s}}\widehat{U}\to\hf$ as $y\to0$ and $\Delta_{g_+}\widehat{U}+\widehat{s}(n-\widehat{s})\widehat{U}=O(y^\infty)$ with $\widehat{s}=\frac{n}{2}+\cgamma$.  It follows from Proposition~\ref{prop:formal_solutions} that
 \begin{multline}
  \label{eqn:V-expansion}
  V = \sum_{\ell=0}^\infty \frac{\Gamma(1+2j-\gamma)}{2^{2\ell}\ell!\Gamma(\ell+1+2j-\gamma)}y^{2j+2\ell} (-\oDelta)^\ell f \\
   + \sum_{\ell=0}^\infty \frac{\Gamma(1-2j+\gamma)}{2^{2\ell}\ell!\Gamma(\ell+1-2j+\gamma)}y^{2\gamma-2j+2\ell} (-\oDelta)^\ell\hf . 
 \end{multline}
 We separate the proof into two cases:
 
 First consider $B_{2\ell}^{2\gamma}$ for integers $0\leq\ell\leq\lfloor\gamma\rfloor$.  From~\eqref{eqn:basic-rules} and~\eqref{eqn:V-expansion} we immediately deduce that $B_{2\ell}^{2\gamma}(V)=0$ if $\ell<j$ and that~\eqref{eqn:reln-to-scattering-fn} holds.  Suppose now that there is an integer $\ell_0\geq0$ such that $B_{2j+2\ell}^{2\gamma}(V)=0$ for all integers $1\leq\ell\leq\ell_0$; note that this holds trivially when $\ell_0=0$.  We compute that
 \begin{multline*}
  B_{2j+2\ell+2}^{2\gamma}(V) = (-1)^{j}2^{2j}\frac{(j+\ell+1)!\Gamma(2+j+\ell-[\gamma])\Gamma(1+2j-\gamma)}{(\ell+1)!\Gamma(1-[\gamma])\Gamma(2+2j+\ell-\gamma)}\oDelta^{\ell+1}f \\ - \binom{j+\ell+1}{\ell+1}\frac{\Gamma(2+j+\ell-[\gamma])\Gamma(1+2j-\gamma)}{\Gamma(1+j-[\gamma])\Gamma(2+2j+\ell-\gamma)}\oDelta^{\ell+1}B_{2j}^{2\gamma}(V) = 0 .
 \end{multline*}
 The claim follows by induction.
 
 Next consider $B_{2[\gamma]+2\ell}^{2\gamma}$ for integers $0\leq\ell\leq\lfloor\gamma\rfloor$.  Computing as above, we deduce that~\eqref{eqn:reln-to-scattering-sfn} holds and that $B_{2[\gamma]+2\ell}^{2\gamma}(V)=0$ if $\ell\not=\lfloor\gamma\rfloor-j$.
\end{proof}

\begin{prop}
 \label{prop:reln-to-scattering2}
 Let $\gamma\in(0,\infty)\setminus\bN$.  Given $0\leq j\leq\lfloor\gamma\rfloor-\lfloor\gamma/2\rfloor-1$, set $\cgamma:=\lfloor\gamma\rfloor-[\gamma]-2j$.  Let $V=y^{-\frac{n-2\gamma}{2}}\mP\bigl(\frac{n}{2}+\cgamma\bigr)\phi$ for some $\phi\in C^\infty(\bR^n)\cap H^{\cgamma}(\bR^n)$.  It holds that
 \begin{align}
  \label{eqn:reln-to-scattering2-fn} B_{2[\gamma]+2j}^{2\gamma}(V) & = (-1)^{j+1}2^{2j+1}j!\frac{\Gamma(1+j+[\gamma])}{\Gamma([\gamma])}\phi, \\
  \label{eqn:reln-to-scattering2-sfn} B_{2\lfloor\gamma\rfloor-2j}^{2\gamma}(V) & = (-1)^{\lfloor\gamma\rfloor-j}2^{2\lfloor\gamma\rfloor-2j}(\lfloor\gamma\rfloor-j)!\frac{\Gamma(1+\lfloor\gamma\rfloor-j-[\gamma])}{\Gamma(1-[\gamma])}\hphi, 
 \end{align}
 where $\hphi:=S\bigl(\frac{n}{2}+\cgamma\bigr)\phi$.  Moreover, $B_{2\alpha}^{2\gamma}(V)=0$ for all $B_{2\alpha}^{2\gamma}\in\mB^{2\gamma}\setminus\{ B_{2[\gamma]+2j}^{2\gamma},B_{2\lfloor\gamma\rfloor-2j}^{2\gamma}\}$.
\end{prop}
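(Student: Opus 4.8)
The plan is to follow the proof of Proposition~\ref{prop:reln-to-scattering}, with the roles of the two families of boundary operators in $\mB^{2\gamma}$ interchanged. First, since $\cgamma=2\lfloor\gamma\rfloor-\gamma-2j=\lfloor\gamma\rfloor-[\gamma]-2j$ lies in $\mI^{2\gamma}$, we have $V\in\mC^{2\gamma}$, so every operator in $\mB^{2\gamma}$ may be applied to it. Expanding the asymptotics~\eqref{eqn:poisson_asymptotics} of $\mP\bigl(\frac{n}{2}+\cgamma\bigr)\phi$ via Proposition~\ref{prop:formal_solutions} at the two indicial roots $\frac{n}{2}\pm\cgamma$, and using $\gamma-\cgamma=2[\gamma]+2j$ and $\gamma+\cgamma=2\lfloor\gamma\rfloor-2j$, I would record that, modulo $O(y^\infty)$,
\[ V = \sum_{\ell=0}^\infty a_\ell\,y^{2[\gamma]+2j+2\ell}(-\oDelta)^\ell\phi + \sum_{\ell=0}^\infty b_\ell\,y^{2\lfloor\gamma\rfloor-2j+2\ell}(-\oDelta)^\ell\hphi , \]
with $a_0=b_0=1$ and $a_\ell,b_\ell$ the explicit coefficients of Proposition~\ref{prop:formal_solutions}. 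Thus the first sum lies in $y^{2[\gamma]}C_{\mathrm{even}}^\infty$ and the second in $C_{\mathrm{even}}^\infty$, the reverse of the situation in Proposition~\ref{prop:reln-to-scattering}.

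Next I would separate the two families of operators using~\eqref{eqn:basic-rules}. Since $[\gamma]\notin\bN_0$, one checks (inductively from Definition~\ref{defn:operators}) that each shifted operator $B_{2[\gamma]+2m}^{2\gamma}$ annihilates $C_{\mathrm{even}}^\infty$ while each integer operator $B_{2m}^{2\gamma}$ annihilates $y^{2[\gamma]}C_{\mathrm{even}}^\infty$. Hence $B_{2[\gamma]+2m}^{2\gamma}(V)$ depends only on the first sum, whose lowest-order term is $y^{2[\gamma]+2j}\phi$: this gives $B_{2[\gamma]+2m}^{2\gamma}(V)=0$ for $m<j$, and the leading computation $T^j(y^{2[\gamma]+2j})=4^jj!\,\frac{\Gamma(1+j+[\gamma])}{\Gamma(1+[\gamma])}y^{2[\gamma]}$ followed by $y^m\partial_y$ and $\iota^\ast$ yields~\eqref{eqn:reln-to-scattering2-fn} (the subtraction terms in Definition~\ref{defn:operators} vanish because they only involve $B_{2[\gamma]+2m'}^{2\gamma}(V)$ with $m'<j$). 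Dually, $B_{2m}^{2\gamma}(V)$ depends only on the second sum, whose lowest-order term is $y^{2(\lfloor\gamma\rfloor-j)}\hphi$; since $1\leq\lfloor\gamma\rfloor-j\leq\lfloor\gamma\rfloor$ the operator $B_{2(\lfloor\gamma\rfloor-j)}^{2\gamma}$ is available, and the analogous leading computation with $T^{\lfloor\gamma\rfloor-j}$ gives~\eqref{eqn:reln-to-scattering2-sfn} together with $B_{2m}^{2\gamma}(V)=0$ for $m<\lfloor\gamma\rfloor-j$.

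It then remains to show $B_{2[\gamma]+2m}^{2\gamma}(V)=0$ for $m>j$ and $B_{2m}^{2\gamma}(V)=0$ for $\lfloor\gamma\rfloor-j<m\leq\lfloor\gamma\rfloor$, which I would prove by induction on $m$ exactly as in the two induction arguments in the proof of Proposition~\ref{prop:reln-to-scattering}: expand $B_{2[\gamma]+2(m+1)}^{2\gamma}(V)$ (resp.\ $B_{2(m+1)}^{2\gamma}(V)$) by its recursive definition, insert the Taylor coefficients $a_\ell$ (resp.\ $b_\ell$), and verify that the surviving leading term cancels the one nonzero subtraction term. The hard part will be this cancellation: it is a Gamma-function identity of the same shape as the displayed computation of $B_{2j+2\ell+2}^{2\gamma}(V)$ in the proof of Proposition~\ref{prop:reln-to-scattering}, with the coefficients in the first half of Definition~\ref{defn:operators} passing to those in the second half under $[\gamma]\mapsto-[\gamma]$. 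Apart from this bookkeeping, everything is forced by tracking lowest powers of $y$ in the two sums.
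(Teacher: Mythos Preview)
Your proposal is correct and follows essentially the same approach as the paper: the paper's proof simply records the expansion~\eqref{eqn:V-expansion2} from Proposition~\ref{prop:formal_solutions} and then says ``using~\eqref{eqn:basic-rules} and~\eqref{eqn:V-expansion2} and computing as in the proof of Proposition~\ref{prop:reln-to-scattering} yields the desired result,'' which is precisely the strategy you outline in more detail. Your explicit observation that $B_{2m}^{2\gamma}$ annihilates $y^{2[\gamma]}C_{\mathrm{even}}^\infty$ and $B_{2[\gamma]+2m}^{2\gamma}$ annihilates $C_{\mathrm{even}}^\infty$ just makes transparent the mechanism that the paper leaves implicit in the phrase ``computing as in the proof of Proposition~\ref{prop:reln-to-scattering}.''
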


\begin{proof}
 It follows from Proposition~\ref{prop:formal_solutions} that
 \begin{multline}
  \label{eqn:V-expansion2}
  V = \sum_{\ell=0}^\infty \frac{\Gamma(1-\lfloor\gamma\rfloor+2j+[\gamma])}{2^{2\ell}\ell!\Gamma(1+\ell-\lfloor\gamma\rfloor+2j+[\gamma])}y^{2[\gamma]+2j+2\ell}(-\oDelta)^\ell\phi \\
   + \sum_{\ell=0}^\infty \frac{\Gamma(1+\lfloor\gamma\rfloor-2j-[\gamma])}{2^{2\ell}\ell!\Gamma(1+\ell+\lfloor\gamma\rfloor-2j-[\gamma])}y^{2\lfloor\gamma\rfloor-2j+2\ell}(-\oDelta)^\ell\hphi. 
 \end{multline}
 Using~\eqref{eqn:basic-rules} and~\eqref{eqn:V-expansion2} and computing as in the proof of Proposition~\ref{prop:reln-to-scattering} yields the desired result.
\end{proof}

Let
\begin{multline}
 \label{eqn:dirichlet-form}
 \mQ_{2\gamma}(U,V) := \int_{\bR_+^{n+1}} U\,L_{2k}V\,y^m\,dx\,dy + \sum_{j=0}^{\lfloor\gamma/2\rfloor} \oint_{\bR^n} B_{2j}^{2\gamma}(U)\,B_{2\gamma-2j}^{2\gamma}(V)\,dx \\ - \sum_{j=0}^{\lfloor\gamma\rfloor-\lfloor\gamma/2\rfloor-1} \oint_{\bR^n} B_{2[\gamma]+2j}^{2\gamma}(U)\,B_{2\lfloor\gamma\rfloor-2j}^{2\gamma}(V)\,dx
\end{multline}
be the Dirichlet form determined by $\gamma\in(0,\infty)\setminus\bN$.  The second goal of this section is to prove that $\mQ_{2\gamma}$ is symmetric.  This implies that boundary value problems involving $L_{2k}$ and $\mB^{2\gamma}$ are variational (e.g.\ Theorem~\ref{thm:dirichlet-principle}).  It also implies that the generalized Dirichlet-to-Neumann operators associated to $L_{2k}$ are formally self-adjoint (cf.\ \cite{Case2015fi,Case2015b,CaseLuo2018}).

The proof that $\mQ_{2\gamma}$ is symmetric is essentially a lengthy computation involving integration by parts.  To that end, it is useful to express $\iota^\ast\circ\Delta_m^j$ and $\iota^\ast\circ y^m\partial_y\Delta_m^j$ in terms of the boundary operators of Definition~\ref{defn:operators}.

\begin{prop}
 \label{prop:operators-via-laplacian}
 Let $\gamma\in(0,\infty)\setminus\bN$.  Given $0\leq j\leq\lfloor\gamma\rfloor$, it holds that
 \begin{align*}
  \iota^\ast\circ \Delta_m^j & = \sum_{\ell=0}^j (-1)^\ell\binom{j}{\ell}\frac{(\lfloor\gamma\rfloor-\ell)!}{(\lfloor\gamma\rfloor-j)!} \frac{\Gamma(\gamma-j-\ell)}{\Gamma(\gamma-2\ell)} \oDelta^{j-\ell} B_{2\ell}^{2\gamma}, \\
  \iota^\ast\circ y^m\partial_y\Delta_m^j & = (-1)^{j+1}\sum_{\ell=0}^j \binom{j}{\ell}\frac{(\lfloor\gamma\rfloor-\ell)!}{(\lfloor\gamma\rfloor-j)!}\frac{\Gamma(1+2\ell-\lfloor\gamma\rfloor+[\gamma])}{\Gamma(1+j+\ell-\lfloor\gamma\rfloor+[\gamma])}\oDelta^{j-\ell} B_{2[\gamma]+2\ell}^{2\gamma} .
 \end{align*}
\end{prop}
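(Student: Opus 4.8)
The plan is to invert the recursive definitions of $B_{2\ell}^{2\gamma}$ and $B_{2[\gamma]+2\ell}^{2\gamma}$ from Definition~\ref{defn:operators}, expressing $\iota^\ast\circ T^j$ and $\iota^\ast\circ y^m\partial_yT^j$ in terms of the boundary operators, and then to reduce $\iota^\ast\circ\Delta_m^j$ to $\iota^\ast\circ T^j$ (and similarly for the odd family) using that $\Delta_m = T + \oDelta$ with $[T,\oDelta]=0$. The first step is purely combinatorial: the defining relations say $(-1)^j\iota^\ast\circ T^j = \sum_{\ell=0}^j\binom{j}{\ell}\frac{\Gamma(1+j-[\gamma])\Gamma(1+2j-2\ell-\gamma)}{\Gamma(1+j-\ell-[\gamma])\Gamma(1+2j-\ell-\gamma)}\oDelta^\ell B_{2j-2\ell}^{2\gamma}$, which is already in the desired form after reindexing $\ell\mapsto j-\ell$ — one checks the coefficient matches $(-1)^{j}\cdot(-1)^{j-\ell}\binom{j}{\ell}\frac{(\lfloor\gamma\rfloor-\ell)!}{(\lfloor\gamma\rfloor-j)!}\frac{\Gamma(\gamma-j-\ell)}{\Gamma(\gamma-2\ell)}$ by a direct manipulation of Gamma functions, using $\lfloor\gamma\rfloor=\gamma-[\gamma]$ so that $1+j-\ell-[\gamma] = 1 + j - \ell - (\gamma - \lfloor\gamma\rfloor)$, etc. So in fact the $T^j$ version of the proposition is essentially a restatement of the definition.

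The substantive step is passing from $T^j$ to $\Delta_m^j$. Since $\Delta_m = T+\oDelta$ and these commute, $\Delta_m^j = \sum_{i=0}^j\binom{j}{i}\oDelta^{j-i}T^i$, so $\iota^\ast\circ\Delta_m^j = \sum_{i=0}^j\binom{j}{i}\oDelta^{j-i}(\iota^\ast\circ T^i)$ (note $\iota^\ast$ commutes with $\oDelta$). Substituting the already-established formula for $\iota^\ast\circ T^i$ and collecting the coefficient of $\oDelta^{j-\ell}B_{2\ell}^{2\gamma}$, I need to verify the identity
\begin{equation*}
 \sum_{i=\ell}^j \binom{j}{i}\binom{i}{\ell}(-1)^{i}\frac{(\lfloor\gamma\rfloor-\ell)!}{(\lfloor\gamma\rfloor-i)!}\frac{\Gamma(\gamma-i-\ell)}{\Gamma(\gamma-2\ell)} = (-1)^\ell\binom{j}{\ell}\frac{(\lfloor\gamma\rfloor-\ell)!}{(\lfloor\gamma\rfloor-j)!}\frac{\Gamma(\gamma-j-\ell)}{\Gamma(\gamma-2\ell)} .
\end{equation*}
Using $\binom{j}{i}\binom{i}{\ell} = \binom{j}{\ell}\binom{j-\ell}{i-\ell}$, pulling out $\binom{j}{\ell}(\lfloor\gamma\rfloor-\ell)!/\Gamma(\gamma-2\ell)$, and substituting $p=i-\ell$, this becomes a single hypergeometric-type summation $\sum_{p=0}^{j-\ell}\binom{j-\ell}{p}(-1)^p\frac{\Gamma(\gamma-2\ell-p)}{(\lfloor\gamma\rfloor-\ell-p)!}$ which must equal $(-1)^0$ times the $p=j-\ell$-type boundary term; this is a Chu–Vandermonde identity in disguise (writing $1/(\lfloor\gamma\rfloor-\ell-p)! = \Gamma(\lfloor\gamma\rfloor-\ell-p+1)^{-1}$ and recognizing a terminating ${}_2F_1$ at argument $1$). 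The odd case is handled identically, starting from the second defining relation and again using $\Delta_m^j=\sum_i\binom{j}{i}\oDelta^{j-i}T^i$ together with the fact that $y^m\partial_y$ commutes with $\oDelta$, so $\iota^\ast\circ y^m\partial_y\Delta_m^j = \sum_i\binom{j}{i}\oDelta^{j-i}(\iota^\ast\circ y^m\partial_yT^i)$.

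The main obstacle is bookkeeping: making sure the Gamma-function arguments are tracked correctly through the reindexing (several of them can pass through poles of $\Gamma$ formally, but since $\gamma\notin\bN$ and $0\le\ell\le\lfloor\gamma\rfloor$ the relevant arguments $\gamma-j-\ell$, $\gamma-2\ell$, etc.\ are non-integers, so all expressions are well-defined) and then recognizing the resulting finite sum as an instance of Chu–Vandermonde. I would set this up cleanly by first proving the $T^j$-identities as a lemma-within-the-proof, then doing the binomial expansion of $\Delta_m^j$, and finally citing the Chu–Vandermonde identity $\sum_{p}\binom{N}{p}(-1)^p\frac{\Gamma(a-p)}{\Gamma(b-p)} = \frac{\Gamma(a-N)\Gamma(b-a-1+N)}{\Gamma(b)\Gamma(b-a-1)}\cdot(\text{something})$ in the form needed; in practice I expect the cleanest route is an induction on $j$ using the Pascal recursion $\binom{j}{\ell}=\binom{j-1}{\ell}+\binom{j-1}{\ell-1}$ together with $\Delta_m^j = \Delta_m\circ\Delta_m^{j-1} = (T+\oDelta)\Delta_m^{j-1}$ and the relation $T\mapsto$ (the $j=1$ case of the first identity), which avoids invoking hypergeometric summation explicitly and reduces everything to the one-step Gamma-function identity $(\lfloor\gamma\rfloor - j)\frac{\Gamma(\gamma-j-\ell)}{\Gamma(\gamma-2\ell)} \cdot(\dots)$ — a routine check.
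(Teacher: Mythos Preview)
Your overall strategy is exactly the one the paper uses: write $\Delta_m=T+\oDelta$, expand $\Delta_m^j=\sum_i\binom{j}{i}\oDelta^{j-i}T^i$, substitute the formula for $\iota^\ast\circ T^i$ obtained directly from Definition~\ref{defn:operators}, and evaluate the resulting single sum (by Chu--Vandermonde or, as the paper does, by a short induction). So the architecture of the argument is fine.

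The gap is in your first step. The claim that the coefficient of $\oDelta^{j-\ell}B_{2\ell}^{2\gamma}$ in $\iota^\ast\circ T^j$ equals $(-1)^{j}(-1)^{j-\ell}\binom{j}{\ell}\frac{(\lfloor\gamma\rfloor-\ell)!}{(\lfloor\gamma\rfloor-j)!}\frac{\Gamma(\gamma-j-\ell)}{\Gamma(\gamma-2\ell)}$ is false. After reindexing, the definition gives the coefficient
\[
(-1)^j\binom{j}{\ell}\frac{\Gamma(1+j-[\gamma])\Gamma(1+2\ell-\gamma)}{\Gamma(1+\ell-[\gamma])\Gamma(1+j+\ell-\gamma)}
= (-1)^{\ell}\binom{j}{\ell}\frac{\Gamma([\gamma]-\ell)}{\Gamma([\gamma]-j)}\cdot\frac{\Gamma(\gamma-j-\ell)}{\Gamma(\gamma-2\ell)},
\]
and $\frac{\Gamma([\gamma]-\ell)}{\Gamma([\gamma]-j)}\neq\frac{(\lfloor\gamma\rfloor-\ell)!}{(\lfloor\gamma\rfloor-j)!}$ in general (take $j=1$, $\ell=0$: the left side is $[\gamma]-1$, the right side is $\lfloor\gamma\rfloor$). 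Consequently the displayed identity you say ``I need to verify'' is not the identity that actually arises, and in fact that displayed identity is false: running it through Chu--Vandermonde produces a factor $(1-[\gamma]+\ell)_{j-\ell}$ where you would need $(-\lfloor\gamma\rfloor+\ell)_{j-\ell}$. The correct sum to evaluate is
\[
\sum_{s=0}^{j-\ell}(-1)^s\binom{j-\ell}{s}\frac{\Gamma(1+\ell+s-[\gamma])}{\Gamma(1+2\ell+s-\gamma)},
\]
which the paper packages as $F(\ell,j-\ell,\gamma-j)$ and evaluates (by induction) to $(-1)^{j-\ell}\frac{(\lfloor\gamma\rfloor-\ell)!}{(\lfloor\gamma\rfloor-j)!}\frac{\Gamma(1+\ell-[\gamma])}{\Gamma(1+j-\gamma)}$; one then applies the reflection formula to reach the stated coefficients. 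If you correct the $T^j$ coefficient and redo the Chu--Vandermonde computation with the right parameters, your argument goes through and is essentially identical to the paper's.
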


\begin{proof}
 First we compute $\iota^\ast\circ \Delta_m^j$.  Since $\Delta_m=T+\oDelta$, we readily deduce from Definition~\ref{defn:operators} that
 \begin{equation}
  \label{eqn:Deltam-preidentity}
  \iota^\ast\circ \Delta_m^j = \sum_{\ell=0}^j (-1)^\ell\binom{j}{\ell}\frac{\Gamma(1+2\ell-\gamma)}{\Gamma(1+\ell-[\gamma])}F(\ell,j-\ell,\gamma-j)\oDelta^{j-\ell} B_{2\ell}^{2\gamma} ,
 \end{equation}
 where
 \begin{equation}
  \label{eqn:F-defn}
  F(j,\ell,\gamma) := \sum_{s=0}^\ell (-1)^s\binom{\ell}{s}\frac{\Gamma(1+j+s-[\gamma])}{\Gamma(1+j-\ell+s-\gamma)} .
 \end{equation}
 A straightforward induction argument yields
 \begin{equation}
  \label{eqn:F-eval}
  F(j,\ell,\gamma) = (-1)^\ell \frac{(\lfloor\gamma\rfloor+\ell)!}{\lfloor\gamma\rfloor!}\frac{\Gamma(1+j-[\gamma])}{\Gamma(1+j-\gamma)} .
 \end{equation}
 Inserting~\eqref{eqn:F-eval} into~\eqref{eqn:Deltam-preidentity} and using the identity $\Gamma(z)\Gamma(1-z)=\pi/\sin(\pi z)$ yields the desired formula for $\iota^\ast\circ \Delta_m^j$.
 
 Similarly, the identity $\Delta_m=T+\oDelta$ and Definition~\ref{defn:operators} together yield
 \begin{multline*}
  \iota^\ast\circ y^m\partial_y\Delta_m^j = \sum_{\ell=0}^j (-1)^{\ell+1}\binom{j}{\ell}\frac{\Gamma(1+2\ell-\lfloor\gamma\rfloor+[\gamma])}{\Gamma(1+\ell+[\gamma])} \\ \times F(\ell+1,j-\ell,1-[\gamma]+\lfloor\gamma\rfloor-j)\oDelta^{j-\ell}B_{2[\gamma]+2\ell}^{2j} ,
 \end{multline*}
 where $F$ is given by~\eqref{eqn:F-defn}.  Combining the above display with~\eqref{eqn:F-eval} yields the desired formula for $\iota^\ast\circ y^m\partial_y\Delta_m^j$.
\end{proof}

\begin{remark}
 The result of Proposition~\ref{prop:operators-via-laplacian} gives recursive formulas for $B_{2j}^{2\gamma}$ (resp.\ $B_{2[\gamma]+2j}^{2\gamma}$) in terms of $\iota^\ast\circ \Delta_m^j$ (resp.\ $\iota^\ast\circ y^m\partial_y\Delta_m^j$) and $\oDelta$.  It is possible to solve these recursive relations to deduce formulas for the boundary operators involving only interior and tangential Laplacians and the weighted normal derivative.  These operators will necessarily be, up to a choice of sign, the highest-order terms of the boundary operators associated to weighted GJMS operators (cf.\ \cite{Case2015fi,Case2015b,CaseLuo2018}). 
\end{remark}

We now prove that $\mQ_{2\gamma}$ is symmetric by giving an explicit formula for $\mQ_{2\gamma}$.  Especially notable in this formula is that the boundary integration involves only the Dirichlet data
\[ \mB_D^{2\gamma} := \left\{ B_{2\alpha}^{2\gamma}\in\mB^{2\gamma} \suchthat 0 \leq \alpha < \gamma/2 \right\} \]
of the inputs.

\begin{thm}
 \label{thm:symmetry}
 Let $\gamma\in(0,\infty)\setminus\bN$.  Set $k=\lfloor\gamma\rfloor+1$ and $m=1-2[\gamma]$.  Then
 \begin{multline*}
  \mQ_{2\gamma}(U,V) = \int_{\bR_+^{n+1}} \lp \Delta_m^{k/2}U, \Delta_m^{k/2}V\rp\,y^m\,dx\,dy \\
   - \sum_{j=0}^{\lfloor\gamma/2\rfloor}\sum_{\ell=0}^{\lfloor\gamma\rfloor-\lfloor\gamma/2\rfloor-1}\oint_{\bR^n} C(j,\ell,\gamma) \Bigl(B_{2j}^{2\gamma}(U)\oDelta^{\lfloor\gamma\rfloor-j-\ell} B_{2[\gamma]+2\ell}^{2\gamma}(V) \\ + B_{2j}^{2\gamma}(V)\oDelta^{\lfloor\gamma\rfloor-j-\ell} B_{2[\gamma]+2\ell}^{2\gamma}(U)\Bigr)\,dx
 \end{multline*}
 where
 \[ \lp\Delta_m^{k/2}U,\Delta_m^{k/2}V\rp :=
      \begin{cases}
       (\Delta_m^{k/2}U)(\Delta_m^{k/2}V), & \text{if $k$ is even}, \\
       \lp\nabla\Delta_m^{(k-1)/2}U,\nabla\Delta_m^{(k-1)/2}V\rp, & \text{if $k$ is odd}, 
      \end{cases} \]
 and
 \begin{multline*}
  C(j,\ell,\gamma) := (-1)^{\lfloor\gamma/2\rfloor+j}\frac{(\lfloor\gamma\rfloor-\ell)!}{j!}\binom{\lfloor\gamma\rfloor-j}{\ell}\binom{\lfloor\gamma\rfloor-j-\ell-1}{\lfloor\gamma/2\rfloor-j} \\ \times \frac{\Gamma(1-\lfloor\gamma\rfloor+2\ell+[\gamma])\Gamma(\gamma-\lfloor\gamma/2\rfloor-j)}{([\gamma]-j+\ell)\Gamma(\gamma-2j)\Gamma([\gamma]-\lfloor\gamma/2\rfloor+\ell)} .
 \end{multline*}
 In particular, $\mQ_{2\gamma}$ is symmetric.
\end{thm}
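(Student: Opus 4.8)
The plan is to prove the identity for $\mQ_{2\gamma}(U,V)$ by a direct integration-by-parts computation starting from the definition~\eqref{eqn:dirichlet-form}, and then read off symmetry as a corollary. First I would focus on the interior term $\int_{\bR_+^{n+1}} U\,L_{2k}V\,y^m\,dx\,dy = \int U\,\Delta_m^k V\,y^m\,dx\,dy$. Since $\Delta_m$ is formally self-adjoint with respect to $y^m\,dx\,dy$, integrating by parts $\lfloor k/2\rfloor$ times (or $(k-1)/2$ times when $k$ is odd, leaving one gradient pairing) moves half the powers of $\Delta_m$ onto $U$; this produces the bulk term $\int \lp\Delta_m^{k/2}U,\Delta_m^{k/2}V\rp\,y^m\,dx\,dy$ plus a sum of boundary integrals over $\bR^n$. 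Each such boundary term is, after using $\lim_{y\to0^+}y^m(\cdots)$, an integral of the form $\oint_{\bR^n}(\iota^\ast\Delta_m^a U)\,(\iota^\ast y^m\partial_y\Delta_m^b V)\,dx$ and its $U\leftrightarrow V$ transpose, with $a+b = k-1$.

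The second step is to rewrite every one of these interior-boundary terms, as well as the explicit boundary terms already present in~\eqref{eqn:dirichlet-form}, purely in terms of the boundary operators $B_{2j}^{2\gamma}$ and $B_{2[\gamma]+2\ell}^{2\gamma}$ using Proposition~\ref{prop:operators-via-laplacian}. Substituting those expansions converts $\iota^\ast\Delta_m^a U$ into $\sum_\ell (\pm)\binom{a}{\ell}(\cdots)\oDelta^{a-\ell}B_{2\ell}^{2\gamma}(U)$ and similarly for the weighted normal derivative factor in terms of $B_{2[\gamma]+2\ell}^{2\gamma}$. After integrating the tangential Laplacians by parts on $\bR^n$ to collect powers of $\oDelta$ onto one factor, every contribution becomes a linear combination of terms $\oint_{\bR^n} B_{2j}^{2\gamma}(U)\,\oDelta^{\lfloor\gamma\rfloor-j-\ell}B_{2[\gamma]+2\ell}^{2\gamma}(V)\,dx$ (and the transpose). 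The key structural fact — which is also what forces the stated form — is that the pure-even pairings $B_{2j}^{2\gamma}(U)\oDelta^{?}B_{2i}^{2\gamma}(V)$ and the pure-odd pairings $B_{2[\gamma]+2j}^{2\gamma}(U)\oDelta^{?}B_{2[\gamma]+2i}^{2\gamma}(V)$ must cancel identically among themselves (this is essentially the content of the vanishing statements in Propositions~\ref{prop:reln-to-scattering} and~\ref{prop:reln-to-scattering2}, encoded combinatorially), leaving only the mixed even-odd pairings, whose total coefficient I would organize into the claimed constant $C(j,\ell,\gamma)$.

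The main obstacle is the identification of the accumulated combinatorial coefficient with the closed form $C(j,\ell,\gamma)$, and the simultaneous verification that the unwanted same-parity pairings cancel. Both reduce to Gamma-function and binomial identities of the same flavour as~\eqref{eqn:F-eval} in the proof of Proposition~\ref{prop:operators-via-laplacian}; the clean way to handle them is to introduce an auxiliary summed quantity analogous to $F(j,\ell,\gamma)$ in~\eqref{eqn:F-defn} and prove a recursion in, say, the number of integration-by-parts steps performed, so that at each stage the remaining boundary terms have a uniform shape. One must be careful about the two cases $k$ even versus $k$ odd (equivalently $\gamma\in\frac12+\bN_0$ versus not), since in the odd case the last integration by parts stops at a gradient pairing rather than producing another boundary term; a small separate check confirms the formula is uniform across both. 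Once the coefficient bookkeeping is done, symmetry is immediate: the right-hand side is manifestly invariant under $U\leftrightarrow V$ because the bulk term is symmetric and the boundary sum has been written in the explicitly symmetrized combination $B_{2j}^{2\gamma}(U)\oDelta^{\lfloor\gamma\rfloor-j-\ell}B_{2[\gamma]+2\ell}^{2\gamma}(V)+B_{2j}^{2\gamma}(V)\oDelta^{\lfloor\gamma\rfloor-j-\ell}B_{2[\gamma]+2\ell}^{2\gamma}(U)$.
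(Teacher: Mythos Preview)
Your overall strategy---integrate by parts to produce the bulk term plus boundary integrals, then convert everything to the $B$-basis via Proposition~\ref{prop:operators-via-laplacian}---matches the paper's proof. However, your description of the cancellation mechanism is wrong, and this is not a cosmetic issue: it is exactly the point where the bookkeeping gets done.

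There are no ``pure-even'' pairings $B_{2j}^{2\gamma}(U)\oDelta^?B_{2i}^{2\gamma}(V)$ or ``pure-odd'' pairings $B_{2[\gamma]+2j}^{2\gamma}(U)\oDelta^?B_{2[\gamma]+2i}^{2\gamma}(V)$ to cancel. The boundary terms produced by integration by parts are all of the form $\oint(\iota^\ast\Delta_m^aU)(\iota^\ast y^m\partial_y\Delta_m^bV)\,dx$ or the transpose, and Proposition~\ref{prop:operators-via-laplacian} says $\iota^\ast\Delta_m^a$ expands only in the even operators $B_{2\ell}^{2\gamma}$ while $\iota^\ast y^m\partial_y\Delta_m^b$ expands only in the odd operators $B_{2[\gamma]+2\ell}^{2\gamma}$. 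So every single term is already mixed even-odd; there is nothing of the same parity to cancel, and Propositions~\ref{prop:reln-to-scattering} and~\ref{prop:reln-to-scattering2} play no role here.

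What actually must be shown is different. Split the odd index $s$ in each mixed term $B_{2\ell}^{2\gamma}(U)\oDelta^{\lfloor\gamma\rfloor-\ell-s}B_{2[\gamma]+2s}^{2\gamma}(V)$ according to whether $s\ge\lfloor\gamma\rfloor-\lfloor\gamma/2\rfloor$ (Neumann range) or $s\le\lfloor\gamma\rfloor-\lfloor\gamma/2\rfloor-1$ (Dirichlet range). The paper introduces the auxiliary sum
\[ H(n,d,\gamma) := \sum_{\ell=0}^n (-1)^\ell\binom{d}{\ell}\frac{\Gamma(\gamma-\ell)}{\Gamma(1+\gamma-d-\ell)} = (-1)^n\binom{d-1}{n}\frac{\Gamma(\gamma-n)}{(\gamma-d)\Gamma(\gamma-n-d)} \]
(not $F$), and the closed form shows that in the Neumann range the coefficient collapses (via $\binom{d-1}{d}=0$ unless $d=0$) to exactly $-\oint B_{2\ell}^{2\gamma}(U)B_{2\gamma-2\ell}^{2\gamma}(V)\,dx$; these are precisely the explicit boundary terms in the definition~\eqref{eqn:dirichlet-form} of $\mQ_{2\gamma}$, with opposite sign. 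The analogous calculation for $\mB_2$ kills the $B_{2[\gamma]+2j}^{2\gamma}(U)B_{2\lfloor\gamma\rfloor-2j}^{2\gamma}(V)$ terms. What survives is the Dirichlet-range piece, and evaluating $H$ there gives $C(j,\ell,\gamma)$. So the cancellation is between the high-index (Neumann) part of the integration-by-parts boundary and the explicit boundary terms already in $\mQ_{2\gamma}$, not between same-parity pairings.
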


\begin{proof}
 To begin, set
 \[ \mQ_{2\gamma}^{(0)}(U,V) := \int_{\bR_+^{n+1}} \left(U\,L_{2k}(V) - \lp\Delta_m^{k/2}U,\Delta_m^{k/2}V\rp\right)\,y^m\,dx\,dy . \]
 A direct computation shows that
 \begin{equation}
  \label{eqn:ibp}
  \mQ_{2\gamma}^{(0)}(U,V) = \mB_1(U,V) - \mB_2(U,V),
 \end{equation}
 where
 \begin{align*}
  \mB_1(U,V) & = (-1)^{\lfloor\gamma\rfloor}\sum_{j=0}^{\lfloor\gamma/2\rfloor} \oint_{\bR^n} (\Delta_m^jU)y^m\partial_y\Delta_m^{\lfloor\gamma\rfloor-j}V\,dx , \\
  \mB_2(U,V) & = (-1)^{\lfloor\gamma\rfloor}\sum_{j=0}^{\lfloor\gamma\rfloor-\lfloor\gamma/2\rfloor-1}\oint_{\bR^n} (\Delta_m^{\lfloor\gamma\rfloor-j}V)y^m\partial_y\Delta_m^jU\,dx .
 \end{align*}

 We begin by simplifying $\mB_1$.  It follows from Proposition~\ref{prop:operators-via-laplacian} that
 \begin{multline*}
  \mB_1(U,V) = \sum_{j=0}^{\lfloor\gamma/2\rfloor} \sum_{\ell=0}^j \sum_{s=0}^{\lfloor\gamma\rfloor-j} \oint_{\bR^n} (-1)^{j+\ell+1}\binom{j}{\ell}\binom{\lfloor\gamma\rfloor-j}{s} \frac{(\lfloor\gamma\rfloor-\ell)!(\lfloor\gamma\rfloor-s)!}{(\lfloor\gamma\rfloor-j)!j!} \\ \times\frac{\Gamma(\gamma-j-\ell)\Gamma(1+2s-\lfloor\gamma\rfloor+[\gamma])}{\Gamma(\gamma-2\ell)\Gamma(1+s-j+[\gamma])}B_{2\ell}^{2\gamma}(U)\oDelta^{\lfloor\gamma\rfloor-\ell-s}B_{2[\gamma]+2s}^{2\gamma}(V)\,dx
 \end{multline*}
 Reindexing the summations in terms of $\ell$, $s$, and $j-\ell$ yields
 \begin{align*}
  \mB_1(U,V) & = -\sum_{\ell=0}^{\lfloor\gamma/2\rfloor}\sum_{s=\lfloor\gamma\rfloor-\lfloor\gamma/2\rfloor}^{\lfloor\gamma\rfloor-\ell} \oint_{\bR^n} \binom{\lfloor\gamma\rfloor-\ell}{s}\frac{(\lfloor\gamma\rfloor-s)!}{\ell!}\frac{\Gamma(1+2s-\lfloor\gamma\rfloor+[\gamma])}{\Gamma(\gamma-2\ell)} \\
    & \qquad \times H(\lfloor\gamma\rfloor-\ell-s,\lfloor\gamma\rfloor-\ell-s,\gamma-2\ell)B_{2\ell}^{2\gamma}(U)\oDelta^{\lfloor\gamma\rfloor-\ell-s}B_{2[\gamma]+2s}^{2\gamma}(V)\,dx \\
   & \quad - \sum_{\ell=0}^{\lfloor\gamma/2\rfloor}\sum_{s=0}^{\lfloor\gamma\rfloor-\lfloor\gamma/2\rfloor-1} \oint_{\bR^n} \binom{\lfloor\gamma\rfloor-\ell}{s}\frac{(\lfloor\gamma\rfloor-s)!}{\ell!} \frac{\Gamma(1+2s-\lfloor\gamma\rfloor+[\gamma])}{\Gamma(\gamma-2\ell)} \\
    & \qquad \times H\left(\lfloor\gamma/2\rfloor-\ell,\lfloor\gamma\rfloor-\ell-s,\gamma-2\ell\right)B_{2\ell}^{2\gamma}(U)\oDelta^{\lfloor\gamma\rfloor-\ell-s}B_{2[\gamma]+2s}^{2\gamma}(V)\,dx
 \end{align*}
 where
 \[ H(n,d,\gamma) := \sum_{\ell=0}^n (-1)^\ell\binom{d}{\ell}\frac{\Gamma(\gamma-\ell)}{\Gamma(1+\gamma-d-\ell)} \]
 for $n,d\in\bN_0$ and $\gamma\in\bR$.  A straightforward induction argument shows that
 \begin{equation}
  \label{eqn:H-eval}
  H(n,d,\gamma) = (-1)^{n}\binom{d-1}{n}\frac{\Gamma(\gamma-n)}{(\gamma-d)\Gamma(\gamma-n-d)} ,
 \end{equation}
 with the convention that $\binom{-1}{0}=1$.  Therefore
 \begin{multline*}
  \mB_1(U,V) = -\sum_{j=0}^{\lfloor\gamma/2\rfloor}\oint_{\bR^n} B_{2j}^{2\gamma}(U)B_{2\gamma-2j}^{2\gamma}(V)\,dx \\ - \sum_{j=0}^{\lfloor\gamma/2\rfloor}\sum_{\ell=0}^{\lfloor\gamma\rfloor-\lfloor\gamma/2\rfloor-1} \oint_{\bR^n} (-1)^{\lfloor\gamma/2\rfloor+j}\frac{(\lfloor\gamma\rfloor-\ell)!}{j!}\binom{\lfloor\gamma\rfloor-j}{\ell}\binom{\lfloor\gamma\rfloor-j-\ell-1}{\lfloor\gamma/2\rfloor-j} \\ \times \frac{\Gamma(1+2\ell-\lfloor\gamma\rfloor+[\gamma])\Gamma(\gamma-\lfloor\gamma/2\rfloor-j)}{([\gamma]-j+\ell)\Gamma(\gamma-2j)\Gamma([\gamma]-\lfloor\gamma/2\rfloor+\ell)} B_{2j}^{2\gamma}(U)\oDelta^{\lfloor\gamma\rfloor-j-\ell}B_{2[\gamma]+2\ell}^{2\gamma}(V)\,dx
 \end{multline*}
 
 We now simplify $\mB_2$.  Following the strategy used to simplify $\mB_1$, we deduce from Proposition~\ref{prop:operators-via-laplacian} and reindexing that
 \begin{align*}
  \mB_2(U,V) & = \sum_{\ell=0}^{\lfloor\gamma\rfloor-\lfloor\gamma/2\rfloor-1}\sum_{s=\lfloor\gamma/2\rfloor+1}^{\lfloor\gamma\rfloor-\ell} \oint_{\bR^n} \binom{\lfloor\gamma\rfloor-\ell}{s}\frac{(\lfloor\gamma\rfloor-s)!}{\ell!}\frac{\Gamma(1+2\ell-\lfloor\gamma\rfloor+[\gamma])}{\Gamma(\gamma-2s)} \\
    & \qquad \times H\left(\lfloor\gamma\rfloor-\ell-s,\lfloor\gamma\rfloor-\ell-s,\lfloor\gamma\rfloor-[\gamma]-2\ell\right)B_{2s}^{2\gamma}(V)\oDelta^{\lfloor\gamma\rfloor-\ell-s}B_{2[\gamma]+2\ell}^{2\gamma}(U)\,dx \\
   & \quad + \sum_{\ell=0}^{\lfloor\gamma\rfloor-\lfloor\gamma/2\rfloor-1}\sum_{s=0}^{\lfloor\gamma/2\rfloor} \oint_{\bR^n} \binom{\lfloor\gamma\rfloor-\ell}{s}\frac{(\lfloor\gamma\rfloor-s)!}{\ell!}\frac{\Gamma(1+2\ell-\lfloor\gamma\rfloor+[\gamma])}{\Gamma(\gamma-2s)} \\
    & \qquad \times H\left(\lfloor\gamma\rfloor-\lfloor\gamma/2\rfloor-\ell-1,\lfloor\gamma\rfloor-\ell-s,\lfloor\gamma\rfloor-[\gamma]-2\ell\right)B_{2s}^{2\gamma}(V)\oDelta^{\lfloor\gamma\rfloor-\ell-s}B_{2[\gamma]+2\ell}^{2\gamma}(U)\,dx .
 \end{align*}
 Applying~\eqref{eqn:H-eval}, we conclude that
 \begin{align*}
  \mB_2(U,V) & = -\sum_{j=0}^{\lfloor\gamma\rfloor-\lfloor\gamma/2\rfloor-1} \oint_{\bR^n} B_{2[\gamma]+2j}^{\gamma}(U) B_{2\lfloor\gamma\rfloor-2j}^{2\gamma}(V)\,dx \\
    & \quad + \sum_{j=0}^{\lfloor\gamma\rfloor-\lfloor\gamma/2\rfloor-1} \sum_{\ell=0}^{\lfloor\gamma/2\rfloor} \oint_{\bR^n} (-1)^{\lfloor\gamma/2\rfloor+\ell}\frac{(\lfloor\gamma\rfloor-\ell)!}{j!}\binom{\lfloor\gamma\rfloor-j}{\ell}\binom{\lfloor\gamma\rfloor-j-\ell-1}{\lfloor\gamma/2\rfloor-\ell} \\
     & \qquad \times \frac{\Gamma(1+2j-\lfloor\gamma\rfloor+[\gamma])\Gamma(\gamma-\lfloor\gamma/2\rfloor-\ell)}{([\gamma]-\ell+j)\Gamma(\gamma-2\ell)\Gamma([\gamma]-\lfloor\gamma/2\rfloor+j)}B_{2\ell}^{2\gamma}(V)\oDelta^{\lfloor\gamma\rfloor-j-\ell}B_{2[\gamma]+2j}^{2\gamma}(U)\,dx 
 \end{align*}
 
 Inserting the final expressions for $\mB_1$ and $\mB_2$ into~\eqref{eqn:ibp} and using the definition of $\mQ_{2\gamma}$ yields the desired conclusion.
\end{proof}
\section{Conformal covariance}
\label{sec:conformal}

In this section we establish the conformal covariance of the weighted poly-Laplacian $L_{2k}$ and the boundary operators $\mB^{2\gamma}$ determined by a given $\gamma\in(0,\infty)\setminus\bN$.  The former result is readily deduced from~\cite{Case2011t,CaseChang2013}, while the latter result is new.

To prove these results we need to commute powers of $r^2(x,y):=\lv x\rv^2+y^2$, regarded as a multiplication operator, through powers of the weighted Laplacian $\Delta_m$ and their composition with $\iota^\ast y^m\partial_y$.  This is summarized in the following two lemmas.

\begin{lem}
 \label{lem:Deltam_rs}
 Let $\gamma\in(0,\infty)\setminus\bN$, set $m:=1-2[\gamma]$, and define $\Delta_m$ by~\eqref{eqn:weighted_laplacian}.  Denote $r^2(x,y):=\lv x\rv^2+y^2$.  Let $k\in\bN_0$ and $s\in\bR$, and regard $r^{2s}$ as a multiplication operator.  As operators, it holds that
 \begin{multline*}
  \Delta_m^kr^{2s} = \sum_{j=0}^k\sum_{\ell=0}^j 2^{2j-\ell}\binom{k}{j}\binom{j}{\ell} \\ \times \frac{\Gamma(s+1)\Gamma\bigl(\frac{n}{2}+1+s+k-j-[\gamma]\bigr)}{\Gamma(s+1-j)\Gamma\bigl(\frac{n}{2}+1+s+k-2j+\ell-[\gamma]\bigr)}r^{2s-2j}\nabla_{\nabla r^2}^\ell\Delta_m^{k-j},
 \end{multline*}
 where
 \[ \nabla_{\nabla r^2}^\ell U := \nabla^\ell U(\nabla r^2,\dotsc,\nabla r^2) \]
 for all $U\in\mC^{2\gamma}$.
\end{lem}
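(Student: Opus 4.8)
My plan is to induct on $k$. The case $k=0$ is immediate: only the summand $(j,\ell)=(0,0)$ occurs, its prefactor is $1$, and every $\Gamma$-ratio is $1$, so the asserted identity reads $r^{2s}=r^{2s}$. The inductive step will rest entirely on three first-order commutation rules, all valid as identities of differential operators (equivalently, pointwise where $r>0$). Writing $E:=\sum_i x_i\partial_{x_i}+y\partial_y$ for the Euler field, one has $\nabla r^2=2(x,y)$, hence $\nabla_{\nabla r^2}=2E$ and $\nabla_{\nabla r^2}^\ell=2^\ell(E)_\ell$ with $(E)_\ell:=E(E-1)\cdots(E-\ell+1)$. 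Since $\Delta$ and $y^{-1}\partial_y$ are homogeneous of degree $-2$ under dilations, $[\Delta_m,E]=2\Delta_m$, whence $\Delta_m\,p(E)=p(E+2)\,\Delta_m$ for every polynomial $p$; expanding $(E+2)(E+1)\cdots(E-\ell+3)$ in the falling-factorial basis and using $E(E)_\ell=(E)_{\ell+1}+\ell(E)_\ell$ gives
\[
 \Delta_m\circ\nabla_{\nabla r^2}^\ell=\nabla_{\nabla r^2}^\ell\circ\Delta_m+4\ell\,\nabla_{\nabla r^2}^{\ell-1}\circ\Delta_m+4\ell(\ell-1)\,\nabla_{\nabla r^2}^{\ell-2}\circ\Delta_m
\]
and $\nabla_{\nabla r^2}\circ\nabla_{\nabla r^2}^\ell=\nabla_{\nabla r^2}^{\ell+1}+2\ell\,\nabla_{\nabla r^2}^\ell$. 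Finally, from $\lv\nabla r^2\rv^2=4r^2$, $\Delta r^2=2(n+1)$, and $y^{-1}\partial_y r^2=2$ one gets $\Delta_m r^{2t}=4t\bigl(\tfrac n2+t-[\gamma]\bigr)r^{2t-2}$ as a function, so the Leibniz rule $\Delta_m(fg)=(\Delta_m f)g+2\lp\nabla f,\nabla g\rp+f\,\Delta_m g$ yields
\[
 \Delta_m\circ r^{2t}=r^{2t}\circ\Delta_m+2t\,r^{2t-2}\circ\nabla_{\nabla r^2}+4t\bigl(\tfrac n2+t-[\gamma]\bigr)r^{2t-2}.
\]

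For the inductive step I would apply $\Delta_m$ to the level-$k$ formula summand by summand: on $r^{2s-2j}\nabla_{\nabla r^2}^\ell\Delta_m^{k-j}$, first commute $\Delta_m$ past $r^{2s-2j}$ (third rule, $t=s-j$), then past $\nabla_{\nabla r^2}^\ell$ (first rule), and use the second rule where a lone $\nabla_{\nabla r^2}$ meets $\nabla_{\nabla r^2}^\ell$. Re-collecting in the monomials $r^{2s-2J}\nabla_{\nabla r^2}^L\Delta_m^{k+1-J}$ expresses the level-$(k+1)$ coefficient $c_{J,L}^{(k+1)}$ as an explicit five-term combination of $c_{J,L}^{(k)}$, $c_{J,L+1}^{(k)}$, $c_{J,L+2}^{(k)}$, $c_{J-1,L-1}^{(k)}$, $c_{J-1,L}^{(k)}$; substituting the closed form and normalizing the $\Gamma$-factors via $\Gamma(x+1)=x\Gamma(x)$, the three ``$j=J$'' contributions collapse (using $(\ell+1)\binom J{\ell+1}=(J-\ell)\binom J\ell$ and the factorization $1+\tfrac{2u}{B}+\tfrac{u(u-1)}{B(B+1)}=\tfrac{(B+u)(B+u+1)}{B(B+1)}$, where $B$ denotes the denominator $\Gamma$-argument, $u=J-L$, and $B+u$ is the numerator $\Gamma$-argument), the two ``$j=J-1$'' contributions simplify the same way, and adding everything and invoking Pascal's rule together with the absorption identities $(J-L)\binom JL=J\binom{J-1}L$ and $J\binom kJ=k\binom{k-1}{J-1}=(k+1-J)\binom k{J-1}$ cancels the residual $1/B$ term and returns exactly $c_{J,L}^{(k+1)}$. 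The step I expect to be the true obstacle is the bookkeeping here rather than any single identity: the three-term $\Delta_m$--$\nabla_{\nabla r^2}^\ell$ rule means every target monomial is fed by up to five source monomials, and the $\Gamma$-argument shifts $\tfrac n2+1+s+k-j-[\gamma]\mapsto\tfrac n2+1+s+(k+1)-J-[\gamma]$ must be tracked with care.

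As a check on the constants I would regard $m\in\bR$ as a parameter: when $m$ is a nonnegative integer, $\Delta_m$ is the radial part in the last $m+1$ variables of the Laplacian on $\bR^{N}$, $N=n+m+1$, with $r^2=\lv z\rv^2$ there, so the identity specializes to the classical formula for $\Delta^k\circ\lv z\rv^{2s}$ on $\bR^N$; since $c_{j,\ell}^{(k)}$ is polynomial in $m$ and $\Delta_m^k$ is polynomial in $m$, agreement for infinitely many such $m$ forces the general case, consistent with $\tfrac n2+1-[\gamma]=\tfrac N2$.
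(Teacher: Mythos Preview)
Your proposal is correct and follows essentially the same inductive route as the paper: establish the commutation rules for $\Delta_m$ with $r^{2t}$ and with $\nabla_{\nabla r^2}^\ell$, then induct on $k$. You supply considerably more detail than the paper does---in particular the rule $\nabla_{\nabla r^2}\circ\nabla_{\nabla r^2}^\ell=\nabla_{\nabla r^2}^{\ell+1}+2\ell\,\nabla_{\nabla r^2}^\ell$ (which the paper's induction needs but does not record), the Euler-field identification $\nabla_{\nabla r^2}^\ell=2^\ell(E)_\ell$, and the dimensional-lift consistency check---but the skeleton of the argument is identical.
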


\begin{remark}
 \label{rk:Deltam_rs}
 We will also apply Lemma~\ref{lem:Deltam_rs} in $\bR^n$ with the induced Laplacian, where it holds that
 \begin{multline*}
  \oDelta^k\orr^{2s} = \sum_{j=0}^k\sum_{\ell=0}^j 2^{2j-\ell}\binom{k}{j}\binom{j}{\ell} \\ \times \frac{\Gamma(s+1)\Gamma\bigl(\frac{n}{2}+s+k-j\bigr)}{\Gamma(s+1-j)\Gamma\bigl(\frac{n}{2}+s+k-2j+\ell\bigr)}\orr^{2s-2j}\onabla_{\onabla \orr^2}^\ell\oDelta^{k-j},
 \end{multline*}
 where $\orr^2(x,y):=\lv x\rv^2$ and
 \[ \onabla_{\onabla\orr^2}^\ell u := \onabla^\ell u(\onabla\orr^2,\dotsc,\onabla\orr^2) \]
 for all $u\in C^\infty(\bR^n)$.
\end{remark}

\begin{proof}
 Observe that
 \[ \Delta_m(r^{2s}u) = r^{2s}\Delta_mu + 2sr^{2s-2}\lp\nabla r^{2s},\nabla u\rp + 2s(n+2s-2[\gamma])r^{2s-2}u \]
 for all $u\in C^\infty(\bR_+^{n+1})$.  Hence, as operators,
 \begin{equation}
  \label{eqn:Delta-onestep}
  \Delta_mr^{2s} = r^{2s}\Delta_m + 2s(n+2s-2[\gamma])r^{2s-2} + 2sr^{2s-2}\nabla_{\nabla r^2} .
 \end{equation}
 
 Let $\ell\in\bN_0$.  The fact that $\nabla^2r^2=2(dx^2+dy^2)$ implies that
 \begin{align}
  \label{eqn:DeltaHessfn} \Delta\nabla_{\nabla r^2}^\ell u & = \nabla_{\nabla r^2}^\ell\Delta u + 4\ell\nabla_{\nabla r^2}^{\ell-1}\Delta u + 4\ell(\ell-1)\nabla_{\nabla r^2}^{\ell-2}\Delta u , \\
  \label{eqn:gradHessfn} \lp y^{-1}\nabla y,\nabla_{\nabla r^2}^\ell u\rp & = \nabla^{\ell+1}u(y^{-1}\nabla y,\nabla r^2,\dotsc,\nabla r^2) \\
   \notag & \quad + 2\ell\nabla^\ell u(y^{-1}\nabla y,\nabla r^2,\dotsc,\nabla r^2)
 \end{align}
 for all $u\in C^\infty(\bR_+^{n+1})$.  The fact that $\nabla^2y=0$ implies that
 \[ \nabla^\ell (y^{-1}\nabla y)(X,\nabla r^2,\dotsc,\nabla r^2) = (-2)^\ell \ell! y^{-1}\lp\nabla y,X\rp \]
 for all vector fields $X$ on $\bR_+^{n+1}$.  In particular,
 \[ \nabla_{\nabla r^2}^\ell\lp\nabla u,y^{-1}\nabla r^2\rp = \sum_{j=0}^\ell (-2)^j\frac{\ell!}{(\ell-j)!}\nabla^{\ell+1-j}u(y^{-1}\nabla y,\nabla r^2,\dotsc,\nabla r^2) . \]
 Combining this with~\eqref{eqn:gradHessfn} yields
 \begin{multline*}
  \lp y^{-1}\nabla y,\nabla_{\nabla r^2}^\ell u\rp = \nabla_{\nabla r^2}^\ell\lp\nabla u,y^{-1}\nabla y\rp + 4\ell\nabla_{\nabla r^2}^{\ell-1}\lp\nabla u,y^{-1}\nabla y\rp \\+ 4\ell(\ell-1)\nabla_{\nabla r^2}^{\ell-2}\lp\nabla u,y^{-1}\nabla y\rp .
 \end{multline*}
 Combining this with~\eqref{eqn:DeltaHessfn} implies that, as operators,
 \begin{equation}
  \label{eqn:Delta-twostep}
  \Delta_m\nabla_{\nabla r^2}^\ell  = 4\ell(\ell-1)\nabla_{\nabla r^2}^{\ell-2}\Delta_m + 4\ell\nabla_{\nabla r^2}^{\ell-1}\Delta_m + \nabla_{\nabla r^2}^\ell\Delta_m .
 \end{equation}
 
 Finally, combining~\eqref{eqn:Delta-onestep} and~\eqref{eqn:Delta-twostep} with a simple induction argument yields the desired conclusion.
\end{proof}

\begin{lem}
 \label{lem:ymDeltam_rs}
 Let $\gamma\in(0,\infty)\setminus\bN$, set $m:=1-2[\gamma]$, and define $\Delta_m$ by~\eqref{eqn:weighted_laplacian}.  Denote $r^2(x,y):=\lv x\rv^2+y^2$.  Let $k\in\bN_0$ and $s\in\bR$, and regard $r^{2s}$ as a multiplication operator.  As operators, it holds that
 \begin{multline*}
  \iota^\ast\circ y^m\partial_y\Delta_m^kr^{2s} = \sum_{j=0}^k\sum_{\ell=0}^j 2^{2j-\ell}\binom{k}{j}\binom{j}{\ell} \\ \times \frac{\Gamma(s+1)\Gamma\bigl(\frac{n}{2}+1+[\gamma]+s+k-j\bigr)}{\Gamma(s+1-j)\Gamma\bigl(\frac{n}{2}+1+[\gamma]+s+k-2j+\ell\bigr)}\orr^{2s-2j}\onabla_{\onabla \orr^2}^\ell\circ\iota^\ast\circ y^m\partial_y\Delta_m^{k-j},
 \end{multline*}
 where $\orr$ and $\onabla_{\onabla\orr^2}^\ell$ are as in Remark~\ref{rk:Deltam_rs}.
\end{lem}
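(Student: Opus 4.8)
The plan is to prove Lemma~\ref{lem:ymDeltam_rs} in direct parallel with the proof of Lemma~\ref{lem:Deltam_rs}: establish the two basic commutator identities that allow one to push $r^{2s}$ and $\nabla_{\nabla r^2}$ past $\Delta_m$, then run an induction on $k$. The only new wrinkle is that we must track the effect of applying $\iota^\ast \circ y^m\partial_y$ at the end, which is why the final formula is stated in terms of the \emph{induced} radial quantities $\orr^2 = \lv x\rv^2$ and $\onabla_{\onabla\orr^2}^\ell$ on $\bR^n$ rather than the ambient ones.

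First I would record the same two operator identities used in Lemma~\ref{lem:Deltam_rs}, namely
\[
 \Delta_m r^{2s} = r^{2s}\Delta_m + 2s(n+2s-2[\gamma])r^{2s-2} + 2s\,r^{2s-2}\nabla_{\nabla r^2},
\]
\[
 \Delta_m\nabla_{\nabla r^2}^\ell = 4\ell(\ell-1)\nabla_{\nabla r^2}^{\ell-2}\Delta_m + 4\ell\,\nabla_{\nabla r^2}^{\ell-1}\Delta_m + \nabla_{\nabla r^2}^\ell\Delta_m,
\]
and use them, by the identical induction as in Lemma~\ref{lem:Deltam_rs}, to obtain
\[
 \Delta_m^k r^{2s} = \sum_{j=0}^k\sum_{\ell=0}^j 2^{2j-\ell}\binom{k}{j}\binom{j}{\ell}\frac{\Gamma(s+1)\Gamma\bigl(\tfrac{n}{2}+1+s+k-j-[\gamma]\bigr)}{\Gamma(s+1-j)\Gamma\bigl(\tfrac{n}{2}+1+s+k-2j+\ell-[\gamma]\bigr)}\,r^{2s-2j}\nabla_{\nabla r^2}^\ell\Delta_m^{k-j}.
\]
Then I would left-compose with $\iota^\ast\circ y^m\partial_y$. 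The key observation is that $y^m\partial_y$ commutes with multiplication by a function of $\orr^2=\lv x\rv^2$ (no $y$-dependence), so $y^m\partial_y(r^{2s-2j}\,\cdot\,)$ restricted to $y=0$ simply becomes $\orr^{2s-2j}\cdot\iota^\ast(y^m\partial_y\,\cdot\,)$ once one checks that the ``correction'' terms coming from differentiating $r^{2s-2j}=(\lv x\rv^2+y^2)^{s-j}$ in $y$ carry a positive power of $y$ and hence vanish at $y=0$ after multiplication by $y^m$ with $m=1-2[\gamma]>-1$; similarly $\iota^\ast(y^m\partial_y \nabla_{\nabla r^2}^\ell W)$ collapses to $\onabla_{\onabla\orr^2}^\ell\iota^\ast(y^m\partial_y W)$ because at $y=0$ the vector field $\nabla r^2 = 2(x,y)$ restricts to the purely tangential $2(x,0)$ and its normal component vanishes there. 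This is the step where one must be slightly careful: one should verify that the terms dropped in passing from $\nabla_{\nabla r^2}^\ell$ to $\onabla_{\onabla\orr^2}^\ell$ (those involving at least one normal derivative, i.e.\ a factor of $\partial_y r^2 = 2y$) really do contribute nothing after applying $\iota^\ast\circ y^m\partial_y$, using $W\in\mC^{2\gamma}$ so that all relevant Taylor coefficients exist and the evenness/parity structure is as claimed.

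Carrying out this restriction term by term converts the exponent shift $\tfrac{n}{2}+1+s+k-j-[\gamma]$ in the $\Delta_m$-formula into $\tfrac{n}{2}+1+[\gamma]+s+k-j$ in the $y^m\partial_y\Delta_m$-formula: intuitively, commuting $r^{2s}$ past the first-order operator $y^m\partial_y$ produces the extra normal-weight contribution $2[\gamma]$ (equivalently, $\iota^\ast\circ y^m\partial_y$ behaves like the boundary operator of weight $2[\gamma]$, so the ``effective dimension'' seen by the radial weight is $n+2[\gamma]$ rather than $n-2[\gamma]$). Concretely, one commutes $y^m\partial_y$ past the single extra $r^{2s-2j}$ factor using $[\,y^m\partial_y,\ \orr^{2s-2j}\,]=0$ together with the identity $y^m\partial_y(r^{2s-2j}F) = r^{2s-2j}y^m\partial_y F + 2(s-j)y^{m+1}r^{2s-2j-2}F$, and notes that the second term vanishes under $\iota^\ast$; the shift in the Gamma-quotient then comes purely from the already-present shift in $\Delta_m^k r^{2s}$ combined with re-expressing $[\gamma]\mapsto -[\gamma]$ via $m = 1-2[\gamma]$ exactly as in the proof of Proposition~\ref{prop:operators-via-laplacian}.

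The main obstacle I anticipate is purely bookkeeping rather than conceptual: confirming that after the restriction $\iota^\ast$ the double sum reindexes correctly and that no boundary terms of intermediate weight survive — in particular checking that the parity conditions defining $\mC^{2\gamma}$ guarantee $\iota^\ast\bigl(y^m\partial_y(\text{odd-in-}y\text{ stuff})\bigr)=0$ — so that the clean formula in the statement, with $\orr$ and $\onabla_{\onabla\orr^2}^\ell$ throughout, is exactly what remains. Given the almost verbatim parallel with Lemma~\ref{lem:Deltam_rs}, I would expect the written proof to be short, essentially: ``repeat the induction of Lemma~\ref{lem:Deltam_rs}, apply $\iota^\ast\circ y^m\partial_y$, and observe that the normal-derivative corrections vanish at $y=0$, shifting the dimensional parameter by $2[\gamma]$.''
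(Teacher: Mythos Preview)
Your approach has a genuine gap at exactly the step you flag as ``slightly careful.'' The claim that
\[
 \iota^\ast\bigl(y^m\partial_y\,\nabla_{\nabla r^2}^\ell W\bigr) = \onabla_{\onabla\orr^2}^\ell\,\iota^\ast\bigl(y^m\partial_y W\bigr)
\]
is \emph{false}. The normal component $2y\partial_y$ of $\nabla_{\nabla r^2}$ does not drop out after applying $\iota^\ast\circ y^m\partial_y$: for $W=W_1+y^{2[\gamma]}W_2\in\mC^{2\gamma}$ one has $\iota^\ast(y^m\partial_y W)=2[\gamma]W_2(\cdot,0)$, and a direct computation already in the case $\ell=1$ gives
\[
 \iota^\ast\bigl(y^m\partial_y\,\nabla_{\nabla r^2}W\bigr)
 = \onabla_{\onabla\orr^2}\,\iota^\ast(y^m\partial_y W) + 4[\gamma]\,\iota^\ast(y^m\partial_y W),
\]
with the extra term coming from $y^m\partial_y(2y\partial_y)$ acting on the $y^{2[\gamma]}W_2$ piece. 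In general the paper shows
\[
 \iota^\ast\circ y^m\partial_y\circ\nabla_{\nabla r^2}^\ell
 = \sum_{t=0}^{\ell} 2^{t}\binom{\ell}{t}\frac{\Gamma(1+2[\gamma])}{\Gamma(1+2[\gamma]-t)}\,\onabla_{\onabla\orr^2}^{\ell-t}\circ\iota^\ast\circ y^m\partial_y,
\]
so there is a whole triangular array of lower-order corrections, not a clean collapse.

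Consequently the shift $-[\gamma]\mapsto +[\gamma]$ in the Gamma arguments does \emph{not} come from any ``re-expressing via $m=1-2[\gamma]$''; it comes from combining the Gamma ratio in Lemma~\ref{lem:Deltam_rs} with the coefficients above and summing. The paper packages this as the identity
\[
 K(a,b,j):=\sum_{t=0}^{j}\binom{j}{t}\frac{\Gamma(a+j)\Gamma(b)}{\Gamma(a+t)\Gamma(b-t)}=\frac{\Gamma(a+b+j-1)}{\Gamma(a+b-1)},
\]
applied with $a=\tfrac{n}{2}+1-[\gamma]+s+k-2j+\ell$ and $b=1+2[\gamma]$, which is precisely what produces $\tfrac{n}{2}+1+[\gamma]$ in the final formula. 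Your outline, as written, would leave the Gamma quotient unchanged from Lemma~\ref{lem:Deltam_rs} and therefore not prove the stated lemma.
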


\begin{proof}
 On the one hand, the identity
 \[ \nabla_{\nabla r^2}^k\circ\nabla_{\nabla r^2} = \nabla_{\nabla r^2}^{k+1} + 2k\nabla_{\nabla r^2}^k \]
 and a straightforward induction argument imply that
 \[ \iota^\ast\circ y^m\partial_y\circ\nabla_{\nabla r^2}^k = \sum_{j=0}^k 2^j\binom{k}{j}\frac{\Gamma(1+2[\gamma])}{\Gamma(1+2[\gamma]-j)}\onabla_{\onabla\orr^2}^{k-j}\circ\iota^\ast\circ y^m\partial_y \]
 on $\mC^{2\gamma}$.  On the other hand, it is clear that $\iota^\ast\circ y^m\partial_y\circ r^{2s} = \orr^{2s}\iota^\ast\circ y^m\partial_y$ on $\mC^{2\gamma}$.  Combining these observations with Lemma~\ref{lem:Deltam_rs} yields
 \begin{multline*}
  \iota^\ast\circ y^m\partial_y\Delta_m^kr^{2s} = \sum_{j=0}^k\sum_{\ell=0}^j 2^{2j-\ell}\binom{k}{j}\binom{j}{\ell}\frac{\Gamma(s+1)}{\Gamma(s+1-j)} \\ \times K\left(\frac{n}{2}+1-[\gamma]+s+k-2j+\ell,1+2[\gamma],j-\ell\right)\orr^{2s-2j}\onabla_{\onabla \orr^2}^{\ell}\circ\iota^\ast\circ y^m\partial_y\Delta_m^{k-j},
 \end{multline*}
 where
 \[ K(a,b,j) := \sum_{t=0}^j \binom{j}{t}\frac{\Gamma(a+j)\Gamma(b)}{\Gamma(a+t)\Gamma(b-t)} . \]
 The conclusion follows from the readily verified identity
 \[ K(a,b,j) = \frac{\Gamma(a+b+j-1)}{\Gamma(a+b-1)} . \qedhere \]
\end{proof}

We are now able to prove the conformal covariance of the weighted poly-Laplacian and its associated boundary operators.  It is clear that these operators are all invariant under translations and rotations which fix the boundary of $\bR_+^{n+1}$.  Thus it suffices to check that they are conformally covariant with respect to the Kelvin transform:

\begin{thm}
 \label{thm:conformal_invariance}
 Fix $\gamma\in(0,\infty)\setminus\bN$.  Let $L_{2k}$ denote the weighted poly-Laplacian~\eqref{eqn:weighted_polylaplacian} and let $\mB^{2\gamma}$ denote the set~\eqref{eqn:boundary_operators} of boundary operators associated to $L_{2k}$ as given by Definition~\ref{defn:operators}.  Let $\hL_{2k}$ and $\hmB^{2\gamma}$ denote the same operators defined in terms of the inverted metric $\hg:=r^{-4}(dx^2+dy^2)$ and $\hy:=r^{-2}y$.  Then, as operators,
 \begin{align}
  \label{eqn:conformal_invariance_Lk} \hL_{2k} & = r^{n-2[\gamma]+4+2\lfloor\gamma\rfloor} L_{2k} r^{2\gamma-n}, \\
  \label{eqn:conformal_invariance_Bk} \hB_{2\alpha}^{2\gamma} & = \orr^{n-2\gamma+4\alpha} B_{2\alpha}^{2\gamma} r^{2\gamma-n}
 \end{align}
 for all $\mB_{2\alpha}^{2\gamma}\in\hmB^{2\gamma}$.
\end{thm}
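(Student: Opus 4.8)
The plan is to reduce both identities to the conformal covariance of the weighted Laplacian $\Delta_m$ itself, together with the commutator identities of Lemmas~\ref{lem:Deltam_rs} and~\ref{lem:ymDeltam_rs}. Recall that under the inversion $\hg = r^{-4}(dx^2+dy^2)$ one has the classical transformation law for the weighted Laplacian with weight parameter $m = 1-2[\gamma]$, namely $\hDelta_m = r^{n+3-2[\gamma]}\Delta_m r^{2[\gamma]-n-1}$ (equivalently, $\Delta_m$ is the weighted GJMS operator of order $2$ on the relevant smooth metric measure space; see~\cite{Case2011t,CaseChang2013}). Iterating this $k = \lfloor\gamma\rfloor+1$ times and collapsing the telescoping powers of $r$ gives $\hDelta_m^k = r^{n+3-2[\gamma]+2(k-1)\cdot 2}\Delta_m^k r^{2[\gamma]-n-1-2(k-1)\cdot 2}$; checking the exponents against $2\gamma = 2\lfloor\gamma\rfloor + 2[\gamma]$ and $k-1 = \lfloor\gamma\rfloor$ yields exactly~\eqref{eqn:conformal_invariance_Lk}. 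So the first identity is essentially bookkeeping once the order-two case is in hand.

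For~\eqref{eqn:conformal_invariance_Bk} I would argue by induction on $\alpha$ using the recursive definition in Definition~\ref{defn:operators}. The base cases $B_0^{2\gamma} = \iota^\ast$ and $B_{2[\gamma]}^{2\gamma} = -\iota^\ast\circ y^m\partial_y$ transform with the expected weights: for $\iota^\ast$ this follows from $\hy = r^{-2}y$ and the fact that $r$ restricted to $\bR^n$ is $\orr = |x|$, so $\iota^\ast r^{2\gamma-n} = \orr^{2\gamma-n}\iota^\ast$; for the weighted normal derivative one computes directly how $\iota^\ast\circ\hy^m\partial_{\hy}$ relates to $\iota^\ast\circ y^m\partial_y$ after conjugating by $r^{2\gamma-n}$, picking up the correct shift $\orr^{n-2\gamma+4[\gamma]}$. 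More generally, the building blocks appearing in the boundary operators are $\iota^\ast\circ\Delta_m^j$ and $\iota^\ast\circ y^m\partial_y\Delta_m^j$, and by Proposition~\ref{prop:operators-via-laplacian} these are, up to invertible triangular changes of basis involving powers of $\oDelta$, the same data as the operators $B_{2\ell}^{2\gamma}$ and $B_{2[\gamma]+2\ell}^{2\gamma}$. So it suffices to show that $\iota^\ast\circ\Delta_m^j\, r^{2\gamma-n}$ and $\iota^\ast\circ y^m\partial_y\Delta_m^j\, r^{2\gamma-n}$ each decompose into a sum $\sum_\ell (\text{const})\,\orr^{(\cdot)}\,\onabla_{\onabla\orr^2}^{j-\ell}\oDelta^{\text{something}}$ acting on the corresponding order-$2\ell$ building block, with coefficients matching those produced by applying the same commutation procedure inside $\bR^n$ to $\oDelta^{\,}\orr^{2\gamma-n}$ (Remark~\ref{rk:Deltam_rs}). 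This is exactly what Lemmas~\ref{lem:Deltam_rs} and~\ref{lem:ymDeltam_rs} deliver: they express $\Delta_m^k r^{2s}$ (resp.\ $\iota^\ast\circ y^m\partial_y\Delta_m^k r^{2s}$) in terms of lower powers of $\Delta_m$ and the derivations $\nabla_{\nabla r^2}^\ell$, and after applying $\iota^\ast$ the radial derivatives $\nabla_{\nabla r^2}$ become the tangential derivations $\onabla_{\onabla\orr^2}$. Setting $2s = 2\gamma - n$ and comparing the Gamma-function coefficients from Lemma~\ref{lem:Deltam_rs} against those from the $\bR^n$-version in Remark~\ref{rk:Deltam_rs} --- they agree precisely because the parameter $[\gamma]$ drops out of the relevant differences of arguments at $s = \frac{2\gamma-n}{2}$ --- completes the inductive step.

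Concretely, the induction runs as follows. Assume~\eqref{eqn:conformal_invariance_Bk} holds for all $B_{2\beta}^{2\gamma}$ with $\beta < \alpha$. Rewrite $\hB_{2\alpha}^{2\gamma}$ using Proposition~\ref{prop:operators-via-laplacian} in terms of $\iota^\ast\circ\hDelta_m^\alpha$ (if $\alpha \in \bN_0$, i.e.\ we are in the first family) or $\iota^\ast\circ\hy^m\partial_{\hy}\hDelta_m^{\alpha-[\gamma]\text{-ish}}$ (second family), together with $\hoDelta = \oDelta$ and lower $\hB$'s. Substitute the iterated conformal law for $\hDelta_m^\bullet$, apply Lemma~\ref{lem:Deltam_rs} or~\ref{lem:ymDeltam_rs} to move $r^{2\gamma-n}$ to the left, restrict to the boundary, and use the inductive hypothesis to replace the resulting lower-order interior operators by $\orr$-conjugated boundary operators. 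Finally, invoke Proposition~\ref{prop:operators-via-laplacian} in reverse (using its invertibility) to recognize the combination as $\orr^{n-2\gamma+4\alpha} B_{2\alpha}^{2\gamma} r^{2\gamma-n}$; the combinatorial identity needed to see that all the intermediate terms cancel is precisely the one already verified in the proof of Proposition~\ref{prop:operators-via-laplacian} (the evaluations of $F$ and $H$), so no new identity is required.

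The main obstacle I anticipate is purely bookkeeping: matching the two-index Gamma-function coefficients coming out of Lemma~\ref{lem:Deltam_rs} (which carry the half-integer shift $[\gamma]$ from the weight $m=1-2[\gamma]$) against the boundary coefficients of Remark~\ref{rk:Deltam_rs} (which have no such shift), and then combining these with the coefficients in Definition~\ref{defn:operators} and Proposition~\ref{prop:operators-via-laplacian} so that everything telescopes to the single power $\orr^{n-2\gamma+4\alpha}$. The key simplification that makes this tractable is that we only ever need the case $s = \tfrac{2\gamma-n}{2}$, for which $\tfrac{n}{2}+1+s+k-j-[\gamma] = 1+\gamma+k-j-[\gamma] = 1+\lfloor\gamma\rfloor + k - j + [\gamma]$-type expressions collapse so that the $[\gamma]$-dependence in the \emph{differences} of Gamma arguments --- which is all that survives in a ratio $\Gamma(a)/\Gamma(a+\ell)$ --- disappears, leaving exactly the $\bR^n$ coefficients. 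Once that is observed, the argument is a routine (if lengthy) induction, and the conformal covariance~\eqref{eqn:conformal_invariance_Lk} of $L_{2k}$ is an immediate corollary of the iterated order-two law.
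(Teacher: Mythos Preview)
Your approach is essentially the paper's: derive the order-two law for $\hDelta_m$, iterate to get~\eqref{eqn:conformal_invariance_Lk}, then induct on the boundary operators via Proposition~\ref{prop:operators-via-laplacian} and Lemmas~\ref{lem:Deltam_rs}--\ref{lem:ymDeltam_rs}. Two points, however, are not right as written.

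First, the step from $\hDelta_m = r^{A}\Delta_m r^{B}$ (with $A+B=2$) to $\hDelta_m^k$ does \emph{not} telescope. Composing $k$ times leaves $r^A(\Delta_m r^2)^{k-1}\Delta_m r^B$, and you must commute each interior $r^2$ through $\Delta_m$; this is exactly where Lemma~\ref{lem:Deltam_rs} enters and why the paper phrases this as an induction. Your displayed exponents $n+3-2[\gamma]+4(k-1)$ and $2[\gamma]-n-1-4(k-1)$ are incorrect (they do not specialize to~\eqref{eqn:conformal_invariance_Lk}); the correct law is $\hDelta_m^k = r^{2k+1+m+n}\Delta_m^k r^{2k-m-n-1}$.

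Second, your parenthetical ``$\hoDelta=\oDelta$'' is false: the boundary inherits the inverted metric $\orr^{-4}dx^2$, so $\hoDelta$ is related to $\oDelta$ by its own conformal law (the $\bR^n$-analogue in Remark~\ref{rk:Deltam_rs}). In the inductive step you must replace each $\hoDelta^{j+1-\ell}$ acting on $\hB_{2\ell}^{2\gamma}$ by $\orr^{(\cdot)}\oDelta^{j+1-\ell}\orr^{(\cdot)}$ before invoking the inductive hypothesis; this is precisely what produces the factors $\orr^{2j+2+n-2\ell}\oDelta^{j+1-\ell}\orr^{2j+2+2\ell-2\gamma}$ that must then be matched against the expansion of $\iota^\ast\circ\Delta_m^{j+1}r^{2j-2\lfloor\gamma\rfloor}$. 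Once these two corrections are made, the rest of your outline goes through as in the paper.
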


\begin{proof}
 The well-known conformal invariance of the Laplacian in $\bR^{n+1}$ implies that
 \[ \hDelta = r^{n+3}\Delta r^{1-n} . \]
 On the other hand, it is straightforward to compute that
 \[ \widehat{y^{-1}\partial_y} = r^4y^{-1}\partial_y - 2r^3\partial_r . \]
 From this we readily deduce that
 \begin{equation}
  \label{eqn:conformal_invariance_weighted_laplacian}
  \hDelta_m = r^{3+m+n}\Delta r^{1-m-n} .
 \end{equation}
 A straightforward induction argument using Lemma~\ref{lem:Deltam_rs} and~\eqref{eqn:conformal_invariance_weighted_laplacian} yields
 \begin{equation}
  \label{eqn:conf_Deltamk}
  \hDelta_m^k = r^{2k+1+m+n}\Delta_m^kr^{2k-m-n-1}
 \end{equation}
 for all $k\in\bN_0$.  In particular, \eqref{eqn:conformal_invariance_Lk} holds.
 
 Consider now the operators $B_{2j}^{2\gamma}$, $0\leq j\leq\lfloor\gamma\rfloor$.  It is clear that $B_0^{2\gamma}$ is conformally covariant in the sense of~\eqref{eqn:conformal_invariance_Bk}.  Suppose that $j\in\bN_0$ is such that $B_{2\ell}^{2\gamma}$ is conformally covariant in the sense of~\eqref{eqn:conformal_invariance_Bk} for all $0\leq\ell\leq j$.  It follows from this assumption, Proposition~\ref{prop:operators-via-laplacian} and~\eqref{eqn:conf_Deltamk} that
 \begin{multline*}
  (-1)^{j+1}\hB_{2j+2}^{2\gamma}r^{n-2\gamma} = \orr^{2j+4+n-2[\gamma]}\iota^\ast\circ\Delta_m^{j+1}r^{2j-2\lfloor\gamma\rfloor} \\ - \sum_{\ell=0}^j (-1)^\ell \binom{j+1}{\ell}\frac{(\lfloor\gamma\rfloor-\ell)!}{(\lfloor\gamma\rfloor-j-1)!}\frac{\Gamma(\gamma-j-\ell-1)}{\Gamma(\gamma-2\ell)} \\ \times \orr^{2j+2+n-2\ell}\oDelta^{j+1-\ell}\orr^{2j+2+2\ell-2\gamma}B_{2\ell}^{2\gamma} . 
 \end{multline*}
 It follows readily from Proposition~\ref{prop:operators-via-laplacian} and Lemma~\ref{lem:Deltam_rs} that $B_{2j+2}^{2\gamma}$ is conformally covariant in the sense of~\eqref{eqn:conformal_invariance_Bk}.  Thus~\eqref{eqn:conformal_invariance_Bk} holds for all $\alpha\in\{0,1,\dotsc,\lfloor\gamma\rfloor\}$.
 
 A similar argument using Proposition~\ref{prop:operators-via-laplacian}, Lemma~\ref{lem:Deltam_rs} and Lemma~\ref{lem:ymDeltam_rs} yields~\eqref{eqn:conformal_invariance_Bk} for $\alpha\in\{[\gamma],[\gamma]+1,\dotsc,\gamma\}$.
\end{proof}

\section{The generalized Caffarelli--Silvestre extension}
\label{sec:cs}

The main result of this section is that solutions of the Dirichlet boundary value problem
\begin{equation}
 \label{eqn:bvp}
 \begin{cases}
  L_{2k}V = 0, & \text{in $\bR_+^{n+1}$}, \\
  B_{2j}^{2\gamma}(V) = f^{(2j)}, & \text{for $0\leq j\leq\lfloor\gamma/2\rfloor$}, \\ 
  B_{2[\gamma]+2j}^{2\gamma}(V) = \phi^{(2j)}, & \text{for $0\leq j\leq\lfloor\gamma\rfloor - \lceil\gamma/2\rceil$} ,
 \end{cases}
\end{equation}
can be used to recover the fractional Laplacian $(-\oDelta)^{\cgamma}$ for any $\cgamma\in\mI$.  To that end, we first characterize the solutions of~\eqref{eqn:bvp}:

\begin{thm}
 \label{thm:bvp}
 Let $\gamma\in(0,\infty)\setminus\bN$.  Given functions $f^{(2j)}\in C^\infty(\bR^n)\cap H^{\gamma-2j}(\bR^n)$, $0\leq j\leq\lfloor\gamma/2\rfloor$, and $\phi^{(2j)}\in C^\infty(\bR^n)\cap H^{\lfloor\gamma\rfloor-[\gamma]-2j}(\bR^n)$, $0\leq j\leq \lfloor\gamma\rfloor-\lfloor\gamma/2\rfloor-1$, there is a unique solution $V$ of~\eqref{eqn:bvp}.  Indeed,
 \begin{multline}
  \label{eqn:bvp-solution}
  V = \sum_{j=0}^{\lfloor\gamma/2\rfloor} (-1)^j2^{-2j}\frac{\Gamma(1-[\gamma])}{j!\Gamma(1+j-[\gamma])} F^{(2j)} \\
  + \sum_{j=0}^{\lfloor\gamma\rfloor-\lfloor\gamma/2\rfloor-1} (-1)^{j+1}2^{-2j-1}\frac{\Gamma([\gamma])}{j!\Gamma(1+j+[\gamma])} \Phi^{(2j)} ,
 \end{multline}
 where
 \begin{align*}
  F^{(2j)} & := y^{-\frac{n-2\gamma}{2}}\mP\left(\frac{n}{2}+\gamma-2j\right)f^{(2j)}, \\
  \Phi^{(2j)} & := y^{-\frac{n-2\gamma}{2}}\mP\left(\frac{n}{2}+\lfloor\gamma\rfloor-[\gamma]-2j\right)\phi^{(2j)} .
 \end{align*}
\end{thm}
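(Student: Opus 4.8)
The plan is to establish existence by verifying the explicit formula~\eqref{eqn:bvp-solution} directly, and uniqueness by an energy argument built on the symmetry identity of Theorem~\ref{thm:symmetry}.

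For existence, note first that each of the numbers $\gamma-2j$ with $0\le j\le\lfloor\gamma/2\rfloor$ and each $\lfloor\gamma\rfloor-[\gamma]-2j$ with $0\le j\le\lfloor\gamma\rfloor-\lfloor\gamma/2\rfloor-1$ lies in $\mI^{2\gamma}$, and the hypotheses on $f^{(2j)}$ and $\phi^{(2j)}$ are exactly the ones under which the Poisson operators $\mP(\tfrac n2+\gamma-2j)$ and $\mP(\tfrac n2+\lfloor\gamma\rfloor-[\gamma]-2j)$ are defined (Section~\ref{sec:scattering}); hence Lemma~\ref{lem:kernel} gives $L_{2k}F^{(2j)}=L_{2k}\Phi^{(2j)}=0$, so $L_{2k}V=0$, and the second property recorded before Definition~\ref{defn:operators} guarantees $V\in\mC^{2\gamma}$, so every operator in $\mB^{2\gamma}$ acts on $V$. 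Propositions~\ref{prop:reln-to-scattering} and~\ref{prop:reln-to-scattering2} then say that, among all boundary operators, only $B_{2j}^{2\gamma}$ and $B_{2\gamma-2j}^{2\gamma}$ act nontrivially on $F^{(2j)}$, and only $B_{2[\gamma]+2j}^{2\gamma}$ and $B_{2\lfloor\gamma\rfloor-2j}^{2\gamma}$ act nontrivially on $\Phi^{(2j)}$. A short index comparison---using $[\gamma]\notin\bZ$ and the ranges of the indices involved---shows that when one evaluates a prescribed operator $B_{2j}^{2\gamma}$ ($0\le j\le\lfloor\gamma/2\rfloor$) or $B_{2[\gamma]+2j}^{2\gamma}$ ($0\le j\le\lfloor\gamma\rfloor-\lfloor\gamma/2\rfloor-1$) on $V$, all cross terms vanish and only the matching summand survives. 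Since the prefactors in~\eqref{eqn:bvp-solution} are precisely the reciprocals of the constants in~\eqref{eqn:reln-to-scattering-fn} and~\eqref{eqn:reln-to-scattering2-fn}, this yields $B_{2j}^{2\gamma}(V)=f^{(2j)}$ and $B_{2[\gamma]+2j}^{2\gamma}(V)=\phi^{(2j)}$, so $V$ solves~\eqref{eqn:bvp}.

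For uniqueness, let $W$ be the difference of two solutions, so $L_{2k}W=0$ and every prescribed boundary operator annihilates $W$. In the defining expression~\eqref{eqn:dirichlet-form} the interior term $\int W\,L_{2k}W\,y^m$ vanishes, and each boundary integral carries a factor $B_{2j}^{2\gamma}(W)$ with $j\le\lfloor\gamma/2\rfloor$ or $B_{2[\gamma]+2j}^{2\gamma}(W)$ with $j\le\lfloor\gamma\rfloor-\lfloor\gamma/2\rfloor-1$, hence vanishes; thus $\mQ_{2\gamma}(W,W)=0$. On the other hand Theorem~\ref{thm:symmetry} writes $\mQ_{2\gamma}(W,W)$ as $\int_{\bR_+^{n+1}}\lp\Delta_m^{k/2}W,\Delta_m^{k/2}W\rp\,y^m\,dx\,dy$ plus a boundary integral depending only on the same Dirichlet data of $W$, which again vanishes. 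Comparing, $\Delta_m^{k/2}W$ vanishes in the sense of that theorem; in particular $\Delta_m^{k'}W=0$ with $k'=\lceil k/2\rceil=\lfloor\gamma/2\rfloor+1$ (when $k$ is odd one first obtains $\nabla\Delta_m^{(k-1)/2}W=0$, and then $\Delta_m^{(k-1)/2}W\equiv 0$ once the growth at infinity is controlled). From here I would integrate $\int W\,\Delta_m^{k'}W\,y^m$ by parts one factor $\Delta_m$ at a time: every boundary integral produced is a product in which one factor has the form $\iota^\ast\Delta_m^aW$ with $a\le k'-1\le\lfloor\gamma/2\rfloor$, and by Proposition~\ref{prop:operators-via-laplacian} such a factor is a combination of $B_{2\ell}^{2\gamma}(W)$ with $\ell\le\lfloor\gamma/2\rfloor$, all of which are among the prescribed (vanishing) data. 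Hence no boundary term survives and self-adjointness of $\Delta_m$ gives $\int|\Delta_m^{\lceil k'/2\rceil}W|^2\,y^m=0$ (with the gradient interpretation when $k'$ is odd), lowering the order further. Iterating drives the order down to $\Delta_mW=0$, and a final integration by parts---whose only boundary term is proportional to $\iota^\ast W=B_0^{2\gamma}(W)=0$---gives $\nabla W\equiv 0$, so $W$ is constant and hence $W=0$.

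The main obstacle is the uniqueness half, and precisely the function class in which it is asserted: without a growth restriction at infinity the homogeneous problem has nontrivial solutions---for instance $y^{2[\gamma]}$ when $k=1$, since $\Delta_m y^{2[\gamma]}=0$ and $\iota^\ast y^{2[\gamma]}=0$---so the argument above must be carried out in a class (finite weighted $W^{k,2}(\bR_+^{n+1},y^m)$-energy, or polyhomogeneous solutions with appropriate decay, as produced by the Poisson operators) in which all the integrations by parts are legitimate and in which ``constant implies zero.'' Once the class is fixed, the substance is the twofold bookkeeping that (i) the only boundary terms surviving in $\mQ_{2\gamma}(W,W)$ and in the iterated integrations by parts involve exactly the prescribed data, and (ii) the prefactors in~\eqref{eqn:bvp-solution} are the reciprocals of the constants in Propositions~\ref{prop:reln-to-scattering} and~\ref{prop:reln-to-scattering2}; the rest is routine verification.
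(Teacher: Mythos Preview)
Your proposal is correct and follows the paper's own approach in substance.  Existence is proved exactly as in the paper, by invoking Lemma~\ref{lem:kernel} and Propositions~\ref{prop:reln-to-scattering}--\ref{prop:reln-to-scattering2} to verify that~\eqref{eqn:bvp-solution} satisfies~\eqref{eqn:bvp}; your index bookkeeping (that only the matching summand survives under each prescribed boundary operator) is precisely what the paper uses.

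For uniqueness the two arguments differ only in how the iteration is organized.  Both start by combining~\eqref{eqn:dirichlet-form} with Theorem~\ref{thm:symmetry} to obtain $\int\lvert\Delta_m^{k/2}W\rvert^2\,y^m=0$ and hence $\Delta_m^{\ell}W=0$ with $\ell=\lfloor(\gamma+1)/2\rfloor$.  From here the paper observes that $W$ solves the homogeneous analogue of~\eqref{eqn:bvp} with $\gamma$ replaced by $\gamma'=\gamma-\lfloor\gamma/2\rfloor-1$ (same $m$, lower $k$) and simply repeats the argument, descending until $\gamma'\in(0,1)$.  You instead integrate $\int W\,\Delta_m^{k'}W\,y^m$ by parts directly and kill each boundary term by noting (via Proposition~\ref{prop:operators-via-laplacian}) that the factor $\iota^\ast\Delta_m^aW$, $a\le k'-1=\lfloor\gamma/2\rfloor$, is a combination of already-vanishing $B_{2\ell}^{2\gamma}(W)$.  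This is valid; the paper's reformulation as a $\gamma'$-BVP is a tidier packaging of the same IBP, since the vanishing of the $\gamma$-Dirichlet data is equivalent (again by Proposition~\ref{prop:operators-via-laplacian}) to the vanishing of $\iota^\ast\Delta_m^jW$ and $\iota^\ast y^m\partial_y\Delta_m^jW$ for the relevant $j$, and hence implies the vanishing of the $\gamma'$-Dirichlet data.  Your explicit flag about the function class (to justify the integrations by parts and ``constant implies zero'') is a point the paper treats implicitly.
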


\begin{proof}
 It follows from Lemma~\ref{lem:kernel}, Proposition~\ref{prop:reln-to-scattering} and Proposition~\ref{prop:reln-to-scattering2} that $V$ satisfies~\eqref{eqn:bvp}.
 
 Suppose now that $U$ is a solution of~\eqref{eqn:bvp}.  Then $W:=U-V$ solves
 \begin{equation}
  \label{eqn:bvp-zero}
  \begin{cases}
   L_{2k}W = 0, & \text{in $\bR_+^{n+1}$}, \\
   B_{2j}^{2\gamma}(W) = 0, & \text{for $0\leq j\leq\lfloor\gamma/2\rfloor$}, \\ 
   B_{2[\gamma]+2j}^{2\gamma}(W) = 0, & \text{for $0\leq j\leq\lfloor\gamma\rfloor - \lfloor\gamma/2\rfloor-1$} .
  \end{cases}
 \end{equation}
 Thus $\mQ_{2\gamma}(W,W)=0$.  It follows from Theorem~\ref{thm:symmetry} that $\Delta_m^{\ell}W=0$, where $\ell=\lfloor(\gamma+1)/2\rfloor$.  If $\gamma\in(0,1)$, we deduce that $W=0$.  If $\gamma>1$, we deduce that $W$ solves the analogue of~\eqref{eqn:bvp-zero} with $\gamma^\prime=\gamma-\lfloor\gamma/2\rfloor-1$.  Continuing in this way in the latter case, we deduce again that $W=0$.  Therefore $U=V$.
\end{proof}

We now present our general analogue of the Caffarelli--Silvestre extension~\cite{CaffarelliSilvestre2007}.  In fact, the following result implies that the fractional Laplacian $(-\oDelta)^{\cgamma}$ can be determined without fully specifying the Dirichlet data (cf.\ \cite{ChangYang2017,Yang2013}).  For example, one can recover $(-\Delta)^\gamma$ by applying $B_{2\gamma}^{2\gamma}$ to any $U\in\ker L_{2k}$ for which $U(\cdot,0)=f$.

\begin{thm}
 \label{thm:cs}
 Let $\gamma\in(0,\infty)\setminus\bN$ and suppose that $V$ is a solution of~\eqref{eqn:bvp}.
 \begin{subequations}
  \label{eqn:cs}
  \begin{enumerate}
   \item Given $0\leq j\leq\lfloor\gamma/2\rfloor$, it holds that
   \begin{equation}
    \label{eqn:cs1}
    B_{2\gamma-2j}^{2\gamma}(V) = c_{\gamma,j}(-\oDelta)^{\gamma-2j} f^{(2j)} ,
   \end{equation}
   where
   \[ c_{\gamma,j} = (-1)^{1+\lfloor\gamma\rfloor}2^{1-2[\gamma]}\frac{(\lfloor\gamma\rfloor-j)!\Gamma(1-[\gamma])\Gamma(1-j+\gamma)\Gamma(2j-\gamma)}{j!\Gamma([\gamma])\Gamma(1+j-[\gamma])\Gamma(-2j+\gamma)} . \]
   \item Given $0\leq j\leq \lfloor\gamma\rfloor-\lfloor\gamma/2\rfloor-1$, it holds that
   \begin{equation}
    \label{eqn:cs2}
    B_{2\lfloor\gamma\rfloor-2j}^{2\gamma}(V) = -d_{\gamma,j}(-\oDelta)^{\lfloor\gamma\rfloor-[\gamma]-2j}\phi^{(2j)} ,
   \end{equation}
   where
   \[ d_{\gamma,j} = (-1)^{\lfloor\gamma\rfloor}2^{2[\gamma]-1}\frac{(\lfloor\gamma\rfloor-j)!\Gamma([\gamma])\Gamma(1+\lfloor\gamma\rfloor-j-[\gamma])\Gamma(2j-\lfloor\gamma\rfloor+[\gamma])}{j!\Gamma(1-[\gamma])\Gamma(1+j+[\gamma])\Gamma(-2j+\lfloor\gamma\rfloor-[\gamma])} . \]
 \end{enumerate}
 \end{subequations}
\end{thm}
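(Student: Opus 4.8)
The plan is to read off both identities directly from the explicit solution formula~\eqref{eqn:bvp-solution}, applying the boundary operators term by term, invoking Propositions~\ref{prop:reln-to-scattering} and~\ref{prop:reln-to-scattering2}, and then converting the scattering operator into a power of $\oDelta$ using the Graham--Zworski identification together with~\eqref{eqn:fractional_gjms_definition}. Recall that $V$ is a fixed linear combination of the functions $F^{(2i)}$, $0\le i\le\lfloor\gamma/2\rfloor$, and $\Phi^{(2i)}$, $0\le i\le\lfloor\gamma\rfloor-\lfloor\gamma/2\rfloor-1$, each of which is a rescaled Poisson extension of the type appearing in Proposition~\ref{prop:reln-to-scattering} (for $F^{(2i)}$) or Proposition~\ref{prop:reln-to-scattering2} (for $\Phi^{(2i)}$).

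To prove~\eqref{eqn:cs1}, fix $0\le j\le\lfloor\gamma/2\rfloor$ and apply $B_{2\gamma-2j}^{2\gamma}$ to~\eqref{eqn:bvp-solution}. Writing $2\gamma-2j=2[\gamma]+2(\lfloor\gamma\rfloor-j)$ exhibits this operator as a member of the second family in $\mB^{2\gamma}$. By the vanishing assertion of Proposition~\ref{prop:reln-to-scattering}, $B_{2\gamma-2j}^{2\gamma}(F^{(2i)})=0$ unless $i=j$ (the competing coincidence $2\gamma-2j=2i$ is impossible since $[\gamma]\notin\bZ$), and by the vanishing assertion of Proposition~\ref{prop:reln-to-scattering2}, $B_{2\gamma-2j}^{2\gamma}(\Phi^{(2i)})=0$ for every admissible $i$ --- here one uses $\lfloor\gamma\rfloor-j\ge\lfloor\gamma\rfloor-\lfloor\gamma/2\rfloor>\lfloor\gamma\rfloor-\lfloor\gamma/2\rfloor-1\ge i$ to exclude $i=\lfloor\gamma\rfloor-j$, while $[\gamma]\notin\bZ$ excludes the remaining index coincidences. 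Hence $B_{2\gamma-2j}^{2\gamma}(V)$ equals the coefficient of $F^{(2j)}$ in~\eqref{eqn:bvp-solution} times $B_{2\gamma-2j}^{2\gamma}(F^{(2j)})$, and Proposition~\ref{prop:reln-to-scattering} evaluates the latter as an explicit constant times $S\bigl(\tfrac{n}{2}+\gamma-2j\bigr)f^{(2j)}$. Since $\gamma-2j\in(0,\infty)\setminus\bN$, inverting~\eqref{eqn:fractional_gjms_definition} and using the Graham--Zworski theorem $P_{2\cgamma}=(-\oDelta)^{\cgamma}$ gives $S\bigl(\tfrac{n}{2}+\gamma-2j\bigr)=2^{2(2j-\gamma)}\tfrac{\Gamma(2j-\gamma)}{\Gamma(\gamma-2j)}(-\oDelta)^{\gamma-2j}$. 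Multiplying the three constants, collecting the sign as $(-1)^{\lfloor\gamma\rfloor+1}$ and the power of two as $2^{2\lfloor\gamma\rfloor-2\gamma+1}=2^{1-2[\gamma]}$ via $\gamma=\lfloor\gamma\rfloor+[\gamma]$, yields~\eqref{eqn:cs1} with the asserted value of $c_{\gamma,j}$.

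The proof of~\eqref{eqn:cs2} is parallel. For $0\le j\le\lfloor\gamma\rfloor-\lfloor\gamma/2\rfloor-1$ one applies $B_{2\lfloor\gamma\rfloor-2j}^{2\gamma}$ to~\eqref{eqn:bvp-solution}; this operator lies in the first family of $\mB^{2\gamma}$, and the same index analysis (now using $\lfloor\gamma\rfloor-j\ge\lfloor\gamma/2\rfloor+1>\lfloor\gamma/2\rfloor\ge i$ together with $[\gamma]\notin\bZ$) shows that only $\Phi^{(2j)}$ contributes, via Proposition~\ref{prop:reln-to-scattering2}, producing a constant times $S\bigl(\tfrac{n}{2}+\lfloor\gamma\rfloor-[\gamma]-2j\bigr)\phi^{(2j)}$. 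Over the admissible range of $j$ one checks $\lfloor\gamma\rfloor-[\gamma]-2j\in(0,\infty)\setminus\bN$ (the integer $\lfloor\gamma\rfloor-2j$ is at least $1$, and $[\gamma]\notin\bZ$), so~\eqref{eqn:fractional_gjms_definition} and Graham--Zworski again convert the scattering operator into $(-\oDelta)^{\lfloor\gamma\rfloor-[\gamma]-2j}$. Consolidating constants exactly as before produces~\eqref{eqn:cs2} with the asserted value of $d_{\gamma,j}$.

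The only substantive work is the two constant simplifications, each a product of the coefficient from~\eqref{eqn:bvp-solution}, the coefficient from Proposition~\ref{prop:reln-to-scattering} (respectively Proposition~\ref{prop:reln-to-scattering2}), and the coefficient from inverting~\eqref{eqn:fractional_gjms_definition}. I expect no conceptual difficulty; the real hazard is a bookkeeping slip while tracking signs, powers of $2$, and $\Gamma$-factors, which is best avoided by separating those three pieces and rewriting everything in terms of $\lfloor\gamma\rfloor$ and $[\gamma]$ before comparing with the claimed formulas.
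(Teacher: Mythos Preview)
Your proposal is correct and follows essentially the same route as the paper: invoke the explicit formula~\eqref{eqn:bvp-solution} (via Theorem~\ref{thm:bvp}), use Propositions~\ref{prop:reln-to-scattering} and~\ref{prop:reln-to-scattering2} to see that only one summand survives under each boundary operator, and then convert the scattering output to a fractional Laplacian via~\eqref{eqn:fractional_gjms_definition}. Your index analysis and constant bookkeeping are more explicit than the paper's, but the argument is the same.
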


\begin{proof}
 By Theorem~\ref{thm:bvp}, $V$ is given by~\eqref{eqn:bvp-solution}.  We separate the proof into two cases $B_{2\alpha}^{2\gamma}\in\mB^{2\gamma}$ depending on whether $\alpha\in\bN_0$ or $\alpha\not\in\bN_0$.
 
 Let $0\leq\gamma\leq\lfloor\gamma/2\rfloor$.  Using~\eqref{eqn:bvp-solution} and Proposition~\ref{prop:reln-to-scattering}, we see that
 \[ B_{2\gamma-2j}^{2\gamma}(V) = (-1)^{\lfloor\gamma\rfloor+1}2^{2\lfloor\gamma\rfloor-4j+1}\frac{(\lfloor\gamma\rfloor-j)!\Gamma(1-j+\gamma)\Gamma(1-[\gamma])}{j!\Gamma([\gamma])\Gamma(1+j-[\gamma])}\hf^{(2j)} , \]
 where $\hf^{(2j)}:=S\bigl(\frac{n}{2}+\gamma-2j\bigr)f^{(2j)}$.  Applying~\eqref{eqn:fractional_gjms_definition} yields~\eqref{eqn:cs1}.
 
 Let $0\leq j\leq\lfloor\gamma\rfloor-\lfloor\gamma/2\rfloor-1$.  Using~\eqref{eqn:bvp-solution} and Proposition~\ref{prop:reln-to-scattering2}, we see that
 \[ B_{2\lfloor\gamma\rfloor-2j}^{2\gamma} = (-1)^{\lfloor\gamma\rfloor+1}2^{2\lfloor\gamma\rfloor-4j-1}\frac{(\lfloor\gamma\rfloor-j)!\Gamma([\gamma])\Gamma(1+\lfloor\gamma\rfloor-j-[\gamma])}{j!\Gamma(1-[\gamma])\Gamma(1+j+[\gamma])}\hphi^{(2j)} , \]
 where $\hphi^{(2j)}:=S\bigl(\frac{n}{2}+\lfloor\gamma\rfloor-[\gamma]-2j\bigr)\phi^{(2j)}$.  Applying~\eqref{eqn:fractional_gjms_definition} yields~\eqref{eqn:cs2}.
\end{proof}
\section{The sharp Sobolev trace inequalities}
\label{sec:trace}

The purpose of this section is to use the boundary operators of Section~\ref{sec:boundary} to prove sharp Sobolev inequalities which imply the Sobolev trace embeddings of the weighted Sobolev space $W^{k,2}(\bR_+^{n+1},y^m)$.  A key tool in this endeavor is the Dirichlet energy
\[ \mE_{2\gamma}(U):=\mQ_{2\gamma}(U,U), \]
where $\mQ_{2\gamma}$ is given by~\eqref{eqn:dirichlet-form}.

Our first result is a Dirichlet principle for solutions of~\eqref{eqn:bvp}.

\begin{thm}
 \label{thm:dirichlet-principle}
 Let $\gamma\in(0,\infty)\setminus\bN$.  Fix functions $f^{(2j)}\in C^\infty(\bR^n)\cap H^{\gamma-2j}(\bR^n)$, $0\leq j\leq\lfloor\gamma/2\rfloor$, and $\phi^{(2j)}\in C^\infty(\bR^n)\cap H^{\lfloor\gamma\rfloor-[\gamma]-2j}(\bR^n)$, $0\leq j\leq\lfloor\gamma\rfloor-\lfloor\gamma/2\rfloor-1$, and denote
 \begin{multline*}
  \mC_D^{2\gamma} := \bigl\{ U\in\mC^{2\gamma} \bigm| B_{2j}^{2\gamma}(U)=f^{(2j)}, 0\leq j\leq\lfloor\gamma\rfloor, \,\mathrm{and}\, \\
   B_{2[\gamma]+2j}^{2\gamma}(U) = \phi^{(2j)}, 0\leq j\leq\lfloor\gamma\rfloor-\lfloor\gamma/2\rfloor-1 \bigr\} .
 \end{multline*}
 Then it holds that
 \[ \mE_{2\gamma}(U) \geq \mE_{2\gamma}(U_D) \]
 for all $U\in\mC_D^{2\gamma}$, where $U_D\in\mC_D^{2\gamma}$ is the unique solution of~\eqref{eqn:bvp}.
\end{thm}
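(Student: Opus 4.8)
The plan is to run the classical completion‑of‑squares argument for the symmetric bilinear form $\mQ_{2\gamma}$, using two facts already established: $\mQ_{2\gamma}$ is symmetric (Theorem~\ref{thm:symmetry}), and the solution $U_D$ of~\eqref{eqn:bvp} lies in $\ker L_{2k}$ (Theorem~\ref{thm:bvp}).

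First I would fix $U\in\mC_D^{2\gamma}$; if $\mE_{2\gamma}(U)=+\infty$ there is nothing to prove, so I may assume it is finite, and in particular that all the integrals below converge. Set $W:=U-U_D$. Since $U_D\in\mC_D^{2\gamma}\subset\mC^{2\gamma}$, we have $W\in\mC^{2\gamma}$, and since $U$ and $U_D$ carry the same Dirichlet data, $B_{2j}^{2\gamma}(W)=0$ for $0\leq j\leq\lfloor\gamma/2\rfloor$ and $B_{2[\gamma]+2j}^{2\gamma}(W)=0$ for $0\leq j\leq\lfloor\gamma\rfloor-\lfloor\gamma/2\rfloor-1$. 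Expanding $\mQ_{2\gamma}(U_D+W,U_D+W)$ by bilinearity and using symmetry, I would write $\mE_{2\gamma}(U)=\mE_{2\gamma}(U_D)+2\mQ_{2\gamma}(U_D,W)+\mQ_{2\gamma}(W,W)$. Next I would show the cross term vanishes: by symmetry $\mQ_{2\gamma}(U_D,W)=\mQ_{2\gamma}(W,U_D)$, and evaluating the latter directly from~\eqref{eqn:dirichlet-form} produces the interior integral $\int_{\bR_+^{n+1}}W\,L_{2k}U_D\,y^m\,dx\,dy$ together with two families of boundary integrals, each carrying a factor $B_{2j}^{2\gamma}(W)$ with $0\leq j\leq\lfloor\gamma/2\rfloor$ or $B_{2[\gamma]+2j}^{2\gamma}(W)$ with $0\leq j\leq\lfloor\gamma\rfloor-\lfloor\gamma/2\rfloor-1$; all of these vanish because $L_{2k}U_D=0$ and the listed boundary operators annihilate $W$. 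Finally I would show the quadratic term is nonnegative: applying the formula of Theorem~\ref{thm:symmetry} to $\mQ_{2\gamma}(W,W)$, every summand of the boundary sum again contains a factor $B_{2j}^{2\gamma}(W)$ or $B_{2[\gamma]+2\ell}^{2\gamma}(W)$ and so drops out, leaving $\mQ_{2\gamma}(W,W)=\int_{\bR_+^{n+1}}\lp\Delta_m^{k/2}W,\Delta_m^{k/2}W\rp\,y^m\,dx\,dy$, which is nonnegative since $y^m>0$ on $\bR_+^{n+1}$ and the integrand is $(\Delta_m^{k/2}W)^2$ or $\lv\nabla\Delta_m^{(k-1)/2}W\rv^2$ according to the parity of $k$. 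Combining the three steps yields $\mE_{2\gamma}(U)=\mE_{2\gamma}(U_D)+\mQ_{2\gamma}(W,W)\geq\mE_{2\gamma}(U_D)$, as desired.

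The only genuine point requiring care is the justification that the symmetric formula of Theorem~\ref{thm:symmetry}, obtained by integration by parts, applies to $U_D$, $W$, and $U$ with no boundary contribution arising at spatial infinity, and that the three displayed integrals converge; both follow from the integrability and decay hypotheses built into the class of admissible competitors (the same hypotheses under which $\mE_{2\gamma}$ is defined), exactly as in the other arguments of this section and in the proof of Theorem~\ref{thm:bvp}. Everything else is formal manipulation of a symmetric bilinear form, so I do not anticipate any real obstacle.
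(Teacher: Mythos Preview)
Your proposal is correct and follows essentially the same route as the paper: both arguments reduce to showing that $\mQ_{2\gamma}(W,W)\geq 0$ for $W$ with vanishing Dirichlet data via the explicit formula of Theorem~\ref{thm:symmetry}, and that the cross term $\mQ_{2\gamma}(U_D,W)$ vanishes. The only cosmetic difference is that the paper phrases the latter step as ``$U_D$ is a critical point of the strictly convex functional $\mE_{2\gamma}$ on $\mC_D^{2\gamma}$'', whereas you verify $\mQ_{2\gamma}(W,U_D)=0$ directly from~\eqref{eqn:dirichlet-form}; your version is arguably more explicit.
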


\begin{proof}
 Fix an element $U_0\in\mC_D^{2\gamma}$ and set
 \begin{multline*}
  \mC_0^{2\gamma} = \bigl\{ U\in\mC^{2\gamma} \bigm| B_{2j}^{2\gamma}(U)=0, 0\leq j\leq\lfloor\gamma\rfloor, \,\mathrm{and}\, \\ B_{2[\gamma]+2j}^{2\gamma}(U) = 0, 0\leq j\leq\lfloor\gamma\rfloor-\lfloor\gamma/2\rfloor-1 \bigr\} .
 \end{multline*}
 Observe that
 \[ \mC_D^{2\gamma} = U_0 + \mC_0^{2\gamma} . \]
 Let $V\in\mC_0^{2\gamma}$.  It follows from Theorem~\ref{thm:symmetry} that
 \[ \mE_{2\gamma}(V) = \int_{\bR_+^{n+1}} \lv\Delta_m^{k/2}V\rv^2\,y^m\,dx\,dy \]
 and that
 \[ \frac{d}{dt}\mE_{2\gamma}(U+tV) = 2\mE_{2\gamma}(V) \geq 0 . \]
 Moreover, equality holds if and only if $V\equiv0$, and hence $\mE_{2\gamma}$ is strictly convex in $\mC_D^{2\gamma}$.  Since the solution $U_D\in\mC_D^{2\gamma}$ of~\eqref{eqn:bvp} is a critical point of $\mE_{2\gamma}\colon\mC_D^{2\gamma}\to\bR$, the result follows.
\end{proof}

The following corollary, obtained by evaluating $\mE_{2\gamma}(V)$ using Theorem~\ref{thm:cs}, gives a sharp Sobolev trace inequality for the embedding
\[ W^{\lfloor\gamma\rfloor+1,2}(\bR_+^{n+1},y^{1-2[\gamma]}) \hookrightarrow \bigoplus_{j=0}^{\lfloor\gamma/2\rfloor} H^{\gamma-2j}(\bR^n) \oplus \bigoplus_{j=0}^{\lfloor\gamma\rfloor-\lfloor\gamma/2\rfloor-1} H^{\lfloor\gamma\rfloor-[\gamma]-2j}(\bR^{n}) . \]

\begin{cor}
 \label{cor:energy-trace}
 Let $\gamma\in(0,\infty)\setminus\bN$.  Then
 \begin{multline*}
  \mE_{2\gamma}(U) \geq \sum_{j=0}^{\lfloor\gamma/2\rfloor} c_{\gamma,j}\oint_{\bR^n} f^{(2j)}(-\Delta)^{\gamma-2j}f^{(2j)}\,dx \\ + \sum_{j=0}^{\lfloor\gamma\rfloor-\lfloor\gamma/2\rfloor-1} d_{\gamma,j}\oint_{\bR^n} \phi^{(2j)}(-\oDelta)^{\lfloor\gamma\rfloor-[\gamma]-2j}\phi^{(2j)}\,dx
 \end{multline*}
 for all $U\in\mC^{2\gamma}\cap W^{\lfloor\gamma\rfloor+1,2}(\bR_+^{n+1},y^{1-2[\gamma]})$, where $f^{(2j)}:=B_{2j}^{2\gamma}(U)$, $0\leq j\leq\lfloor\gamma/2\rfloor$ and $\phi^{(2j)}:=B_{2[\gamma]+2j}^{2\gamma}(U)$, $0\leq j\leq\lfloor\gamma\rfloor-\lfloor\gamma/2\rfloor-1$, and the constants $c_{\gamma,j}$ and $d_{\gamma,j}$ are given in Theorem~\ref{thm:cs}.  Moreover, equality holds if and only if $U$ is the solution of~\eqref{eqn:bvp}.
\end{cor}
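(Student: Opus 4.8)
The plan is to deduce the corollary directly from the Dirichlet principle, Theorem~\ref{thm:dirichlet-principle}, together with the generalized Caffarelli--Silvestre extension, Theorem~\ref{thm:cs}. Given $U\in\mC^{2\gamma}\cap W^{\lfloor\gamma\rfloor+1,2}(\bR_+^{n+1},y^{1-2[\gamma]})$, I would first set $f^{(2j)}:=B_{2j}^{2\gamma}(U)$ for $0\leq j\leq\lfloor\gamma/2\rfloor$ and $\phi^{(2j)}:=B_{2[\gamma]+2j}^{2\gamma}(U)$ for $0\leq j\leq\lfloor\gamma\rfloor-\lfloor\gamma/2\rfloor-1$. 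A weighted trace estimate for $W^{\lfloor\gamma\rfloor+1,2}(\bR_+^{n+1},y^{1-2[\gamma]})$ places these boundary data in the fractional Sobolev spaces required in Theorem~\ref{thm:bvp}, so $U$ lies in the affine space $\mC_D^{2\gamma}$ determined by this data and the corresponding solution $U_D$ of~\eqref{eqn:bvp} exists; by Theorem~\ref{thm:dirichlet-principle} we get $\mE_{2\gamma}(U)\geq\mE_{2\gamma}(U_D)$, with $\mE_{2\gamma}(U_D)$ finite since it is bounded by $\mE_{2\gamma}(U)<\infty$.

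The next step is to evaluate $\mE_{2\gamma}(U_D)=\mQ_{2\gamma}(U_D,U_D)$ using the definition~\eqref{eqn:dirichlet-form}. Since $L_{2k}U_D=0$, the interior integral vanishes, leaving only the two boundary sums. There $B_{2j}^{2\gamma}(U_D)=f^{(2j)}$ and $B_{2[\gamma]+2j}^{2\gamma}(U_D)=\phi^{(2j)}$ by construction, while Theorem~\ref{thm:cs} identifies the conjugate boundary data as $B_{2\gamma-2j}^{2\gamma}(U_D)=c_{\gamma,j}(-\oDelta)^{\gamma-2j}f^{(2j)}$ and $B_{2\lfloor\gamma\rfloor-2j}^{2\gamma}(U_D)=-d_{\gamma,j}(-\oDelta)^{\lfloor\gamma\rfloor-[\gamma]-2j}\phi^{(2j)}$. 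Substituting these into~\eqref{eqn:dirichlet-form}, and observing that the minus sign in front of the $B_{2[\gamma]+2j}^{2\gamma}$-sum cancels the sign of $-d_{\gamma,j}$, produces exactly the right-hand side of the claimed inequality. For the equality statement I would reuse the strict convexity established in the proof of Theorem~\ref{thm:dirichlet-principle}: writing $U=U_D+W$ with $W\in\mC_0^{2\gamma}$, Theorem~\ref{thm:symmetry} gives $\mE_{2\gamma}(U)=\mE_{2\gamma}(U_D)+\int_{\bR_+^{n+1}}\lv\Delta_m^{k/2}W\rv^2\,y^m\,dx\,dy$, so equality forces $W\equiv0$, i.e.\ $U$ is the solution of~\eqref{eqn:bvp}.

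The conceptual content is thus entirely contained in Theorems~\ref{thm:dirichlet-principle} and~\ref{thm:cs}; the step requiring genuine care — and the main obstacle — is the functional-analytic bookkeeping, namely checking that an arbitrary element of the weighted Sobolev space has trace data in the fractional Sobolev spaces appearing in Theorem~\ref{thm:bvp}, that the minimizer $U_D$ assembled from the Poisson operators again lies in $W^{\lfloor\gamma\rfloor+1,2}(\bR_+^{n+1},y^{1-2[\gamma]})$ with finite $\mE_{2\gamma}$, and that the identity of Theorem~\ref{thm:symmetry} (hence Theorem~\ref{thm:dirichlet-principle}) remains valid under only the finite-energy hypothesis, if necessary by approximating $U$ in $W^{\lfloor\gamma\rfloor+1,2}(\bR_+^{n+1},y^{1-2[\gamma]})$ by elements of $\mC^{2\gamma}$ with controlled behaviour at infinity. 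Once these technicalities are in place, the remaining computation is immediate.
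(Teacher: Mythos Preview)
Your proposal is correct and follows exactly the paper's approach: apply Theorem~\ref{thm:dirichlet-principle} to reduce to the minimizer $U_D$, then use Theorem~\ref{thm:cs} to evaluate $\mE_{2\gamma}(U_D)$ via~\eqref{eqn:dirichlet-form}. You spell out more details than the paper does (the sign cancellation, the equality case via strict convexity, and the functional-analytic caveats), but the underlying argument is the same.
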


\begin{proof}
 Set $\mC_D^{2\gamma}=U+\mC_0^{2\gamma}$.  By Theorem~\ref{thm:dirichlet-principle}, there is a unique minimizer $U_D$ of $\mE_{2\gamma}\colon\mC_D^{2\gamma}\to\bR$.  Since $U_D$ satisfies~\eqref{eqn:bvp}, we deduce from Theorem~\ref{thm:cs} that
 \begin{multline*}
  \mE_{2\gamma}(U_D) = \sum_{j=0}^{\lfloor\gamma/2\rfloor} c_{\gamma,j}\oint_{\bR^n} f^{(2j)}(-\Delta)^{\gamma-2j}f^{(2j)}\,dx \\ + \sum_{j=0}^{\lfloor\gamma\rfloor-\lfloor\gamma/2\rfloor-1} d_{\gamma,j}\oint_{\bR^n} \phi^{(2j)}(-\oDelta)^{\lfloor\gamma\rfloor-[\gamma]-2j}\phi^{(2j)}\,dx .
 \end{multline*}
 The conclusion readily follows.
\end{proof}

Specializing to the case when $B_{2\alpha}^{2\gamma}(U)=0$ for $B_{2\alpha}^{2\gamma}\in \mB_D^{2\gamma}\setminus\{B_0^{2\gamma}\}$ yields an energy inequality relating a weighted GJMS operator in the interior and the fractional Laplacian $(-\oDelta)^\gamma$ in the boundary.  This result makes explicit the sharp constants in~\cite[Corollary~3.5]{Yang2013}.

\begin{cor}
 \label{cor:yang}
 Let $\gamma\in(0,\infty)\setminus\bN$ and denote
 \begin{multline*}
  \mC_{+}^{2\gamma} := \bigl\{ U\colon\bR_+^{n+1}\to\bR \bigm| B_{2j}^{2\gamma}(U)=0, 1\leq j\leq\lfloor\gamma\rfloor, \,\mathrm{and}\, \\
   B_{2[\gamma]+2j}^{2\gamma}(U) = 0, 0\leq j\leq\lfloor\gamma\rfloor-\lfloor\gamma/2\rfloor-1 \bigr\} .
 \end{multline*}
 Then
 \begin{equation}
  \label{eqn:yang-energy}
  \mE_{2\gamma}(U) \geq (-1)^{1+\lfloor\gamma\rfloor}2^{1-2[\gamma]}\lfloor\gamma\rfloor!\frac{\gamma\Gamma(-\gamma)}{\Gamma([\gamma])}\oint_{\bR^n} f(-\oDelta)^\gamma f\,dx
 \end{equation}
 for all $U\in\mC_{+}^{2\gamma}$, where $f:=U(\cdot,0)$.  Moreover, equality holds if and only if $U$ is the unique solution of
 \begin{equation}
  \label{eqn:bvp-yang}
  \begin{cases}
   \Delta_m^k U = 0, & \text{in $\bR_+^{n+1}$}, \\
   U = f, & \text{on $\bR^n$}, \\
   \Delta_m^j U = \frac{\lfloor\gamma\rfloor!}{(\lfloor\gamma\rfloor-j)!}\frac{\Gamma(\gamma-j)}{\Gamma(\gamma)}\oDelta^jf, & \text{on $\bR^n$, for $1\leq j\leq\lfloor\gamma/2\rfloor$}, \\
   y^m\partial_y\Delta_m^j U = 0, & \text{on $\bR^n$, for $0\leq j\leq\lfloor\gamma\rfloor-\lfloor\gamma/2\rfloor-1$},
  \end{cases}
 \end{equation}
 where $\Delta_m:=\Delta+(1-2[\gamma])y^{-1}\partial_y$ and $k=\lfloor\gamma\rfloor+1$, and $(-\oDelta)^\gamma f$ can be recovered from the solution of~\eqref{eqn:bvp-yang} by
 \begin{equation}
  \label{eqn:yang-cs}
  (-\oDelta)^\gamma f = 2^{1-2[\gamma]}\lfloor\gamma\rfloor!\frac{\gamma\Gamma(-\gamma)}{\Gamma([\gamma])} \lim_{y\to0^+} y^{1-2[\gamma]}\partial_y\Delta_m^{\lfloor\gamma\rfloor} U .
 \end{equation}
\end{cor}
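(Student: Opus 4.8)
The plan is to specialize the general Dirichlet principle and energy-trace identity (Theorem~\ref{thm:dirichlet-principle} and Corollary~\ref{cor:energy-trace}) to the class $\mC_+^{2\gamma}$, where all Dirichlet data vanish except $f^{(0)}=f$. First I would observe that $\mC_+^{2\gamma}$ is exactly the set $\mC_D^{2\gamma}$ of Theorem~\ref{thm:dirichlet-principle} with the data $f^{(2j)}=0$ for $1\le j\le\lfloor\gamma/2\rfloor$ and $\phi^{(2j)}=0$ for $0\le j\le\lfloor\gamma\rfloor-\lfloor\gamma/2\rfloor-1$, together with $f^{(0)}=f$. Corollary~\ref{cor:energy-trace} then immediately gives
\[
 \mE_{2\gamma}(U) \geq c_{\gamma,0}\oint_{\bR^n} f(-\oDelta)^\gamma f\,dx
\]
for all $U\in\mC_+^{2\gamma}$, with equality precisely when $U$ solves~\eqref{eqn:bvp} with this data. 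So the inequality~\eqref{eqn:yang-energy} reduces to the algebraic identity
\[
 c_{\gamma,0} = (-1)^{1+\lfloor\gamma\rfloor}2^{1-2[\gamma]}\lfloor\gamma\rfloor!\frac{\gamma\Gamma(-\gamma)}{\Gamma([\gamma])},
\]
which follows by setting $j=0$ in the formula for $c_{\gamma,j}$ from Theorem~\ref{thm:cs} and simplifying the Gamma factors (using $\Gamma(1-\gamma)\Gamma(\gamma)=\pi/\sin\pi\gamma$ and $\Gamma(-\gamma)=\Gamma(1-\gamma)/(-\gamma)$, together with $\Gamma(1-[\gamma])$ canceling against $\Gamma(1-[\gamma])$). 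This is routine bookkeeping.

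Next I would identify the Euler--Lagrange boundary value problem~\eqref{eqn:bvp} with the more explicit system~\eqref{eqn:bvp-yang}. The equation $L_{2k}V=0$ is $\Delta_m^kV=0$ by definition~\eqref{eqn:weighted_polylaplacian}, so the point is to translate the conditions ``$B_{2j}^{2\gamma}(V)=0$ for $1\le j\le\lfloor\gamma/2\rfloor$, $B_{2[\gamma]+2j}^{2\gamma}(V)=0$ for $0\le j\le\lfloor\gamma\rfloor-\lfloor\gamma/2\rfloor-1$, $B_0^{2\gamma}(V)=f$'' into conditions on $\iota^\ast\Delta_m^jV$ and $\iota^\ast y^m\partial_y\Delta_m^jV$. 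This is precisely the content of Proposition~\ref{prop:operators-via-laplacian}, which expresses $\iota^\ast\circ\Delta_m^j$ and $\iota^\ast\circ y^m\partial_y\Delta_m^j$ as explicit triangular linear combinations of the $B_{2\ell}^{2\gamma}$ and $B_{2[\gamma]+2\ell}^{2\gamma}$ with $\ell\le j$. Inverting that triangular system (the leading coefficient being $\pm1$) and substituting $B_0^{2\gamma}(V)=f$, $B_{2\ell}^{2\gamma}(V)=0$ for $\ell\ge1$, $B_{2[\gamma]+2\ell}^{2\gamma}(V)=0$ for all $\ell$, one reads off $\iota^\ast\Delta_m^jV=\frac{\lfloor\gamma\rfloor!}{(\lfloor\gamma\rfloor-j)!}\frac{\Gamma(\gamma-j)}{\Gamma(\gamma)}\oDelta^jf$ and $\iota^\ast y^m\partial_y\Delta_m^jV=0$ in the claimed ranges. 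The value of the coefficient $\frac{\lfloor\gamma\rfloor!}{(\lfloor\gamma\rfloor-j)!}\frac{\Gamma(\gamma-j)}{\Gamma(\gamma)}$ comes from setting $\ell=0$ in the first display of Proposition~\ref{prop:operators-via-laplacian} and recalling $B_0^{2\gamma}=\iota^\ast$; uniqueness of the solution is inherited from Theorem~\ref{thm:bvp}.

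Finally, for the recovery formula~\eqref{eqn:yang-cs}: by Theorem~\ref{thm:cs}(1) with $j=0$, the solution $V$ of~\eqref{eqn:bvp} satisfies $B_{2\gamma}^{2\gamma}(V)=c_{\gamma,0}(-\oDelta)^\gamma f$, so it suffices to show that on $\mC_+^{2\gamma}$ the operator $B_{2\gamma}^{2\gamma}$ reduces to $\lim_{y\to0^+}(-1)\cdot\bigl(\text{sign}\bigr)\, y^{m}\partial_y\Delta_m^{\lfloor\gamma\rfloor}$ up to the factor matching $c_{\gamma,0}$ against the constant in~\eqref{eqn:yang-cs}. Concretely: $2\gamma = 2[\gamma]+2\lfloor\gamma\rfloor$, so $B_{2\gamma}^{2\gamma}=B_{2[\gamma]+2\lfloor\gamma\rfloor}^{2\gamma}$, and the second recursion of Definition~\ref{defn:operators} (equivalently, the $j=\lfloor\gamma\rfloor$ case of the second display of Proposition~\ref{prop:operators-via-laplacian}, with all lower $B_{2[\gamma]+2\ell}^{2\gamma}(V)=0$) gives $B_{2\gamma}^{2\gamma}(V)=(-1)^{\lfloor\gamma\rfloor+1}\iota^\ast y^m\partial_y\Delta_m^{\lfloor\gamma\rfloor}V$. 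Combining with $B_{2\gamma}^{2\gamma}(V)=c_{\gamma,0}(-\oDelta)^\gamma f$ and the already-established value of $c_{\gamma,0}$ yields~\eqref{eqn:yang-cs} after dividing through; one checks the constant $2^{1-2[\gamma]}\lfloor\gamma\rfloor!\frac{\gamma\Gamma(-\gamma)}{\Gamma([\gamma])}$ is exactly $(-1)^{\lfloor\gamma\rfloor+1}/c_{\gamma,0}^{-1}$, i.e.\ $|c_{\gamma,0}|$ with the sign absorbed. The main obstacle here is purely organizational: carefully matching the various sign conventions $(-1)^{\lfloor\gamma\rfloor}$, $(-1)^{\lfloor\gamma\rfloor+1}$ across Definition~\ref{defn:operators}, Theorem~\ref{thm:cs}, and the target formulas so that the signs in~\eqref{eqn:yang-energy} and~\eqref{eqn:yang-cs} come out consistently; no genuinely new analytic input is needed beyond Theorems~\ref{thm:bvp}, \ref{thm:cs} and~\ref{thm:dirichlet-principle}.
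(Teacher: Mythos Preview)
Your approach is essentially that of the paper: specialize Corollary~\ref{cor:energy-trace} to the data $f^{(0)}=f$, $f^{(2j)}=\phi^{(2j)}=0$ otherwise; use Proposition~\ref{prop:operators-via-laplacian} to rewrite the boundary conditions in~\eqref{eqn:bvp} as~\eqref{eqn:bvp-yang}; and combine Theorem~\ref{thm:cs} with Proposition~\ref{prop:operators-via-laplacian} to obtain~\eqref{eqn:yang-cs}. The constant verifications you sketch are routine and correct.

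There is, however, one small but genuine gap in your derivation of~\eqref{eqn:yang-cs}. You assert that for the minimizer $V=U_D$ one has ``all lower $B_{2[\gamma]+2\ell}^{2\gamma}(V)=0$'' so that the $j=\lfloor\gamma\rfloor$ case of the second display in Proposition~\ref{prop:operators-via-laplacian} collapses to the single term $\ell=\lfloor\gamma\rfloor$. But the membership $V\in\mC_+^{2\gamma}$ only forces $B_{2[\gamma]+2\ell}^{2\gamma}(V)=0$ for $0\leq\ell\leq\lfloor\gamma\rfloor-\lfloor\gamma/2\rfloor-1$; the remaining indices $\lfloor\gamma\rfloor-\lfloor\gamma/2\rfloor\leq\ell\leq\lfloor\gamma\rfloor-1$ correspond (via $\ell=\lfloor\gamma\rfloor-j$) to the \emph{Neumann-type} operators $B_{2\gamma-2j}^{2\gamma}$ with $1\leq j\leq\lfloor\gamma/2\rfloor$, and these are not prescribed by the Dirichlet data. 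The paper closes this gap by invoking Theorem~\ref{thm:cs}(1) for $1\leq j\leq\lfloor\gamma/2\rfloor$: since $f^{(2j)}=0$, one has $B_{2\gamma-2j}^{2\gamma}(U_D)=c_{\gamma,j}(-\oDelta)^{\gamma-2j}f^{(2j)}=0$. Once you insert this one line, your argument is complete and coincides with the paper's.
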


\begin{proof}
 It follows immediately from Corollary~\ref{cor:energy-trace} that~\eqref{eqn:yang-energy} holds for all $U\in\mC_+^{2\gamma}$, with equality if and only if $U$ is the unique solution of~\eqref{eqn:bvp} with $B_0^{2\gamma}(U)=f$ and $B_{2\alpha}^{2\gamma}(U)=0$ for $B_{2\alpha}^{2\gamma}\in\mB^{2\gamma}\setminus\{B_0^{2\gamma}\}$.  Using Proposition~\ref{prop:operators-via-laplacian}, we see that~\eqref{eqn:bvp-yang} is equivalent to~\eqref{eqn:bvp} when the latter is restricted to functions $U\in\mC_+^{2\gamma}$.  Using Theorem~\ref{thm:cs}, we see that $B_{2\gamma-2j}^{2\gamma}(U)=0$ for all $1\leq j\leq\lfloor\gamma\rfloor$.  Proposition~\ref{prop:operators-via-laplacian} and Theorem~\ref{thm:cs} then imply the relation~\eqref{eqn:yang-cs}.
\end{proof}

Our next result, obtained by applying the sharp fractional Sobolev inequalities~\cite{Beckner1993,Branson1995,Lieb1983} to Corollary~\ref{cor:energy-trace}, gives a sharp Sobolev trace inequality for the embedding
\[ W^{\lfloor\gamma\rfloor+1,2}(\bR_+^{n+1},y^{1-2[\gamma]}) \hookrightarrow \bigoplus_{j=0}^{\lfloor\gamma/2\rfloor} L^{\frac{2n}{n-2\gamma+4j}}(\bR^n) \oplus \bigoplus_{j=0}^{\lfloor\gamma\rfloor-\lfloor\gamma/2\rfloor-1} L^{\frac{2n}{n-2\lfloor\gamma\rfloor+2[\gamma]+4j}}(\bR^n) \]
when $n>2\gamma$.

\begin{thm}
 \label{thm:sobolev-trace}
 Let $\gamma\in(0,n/2)\setminus\bN$.  Then
 \begin{multline*}
  \mE_{2\gamma}(U) \geq \sum_{j=0}^{\lfloor\gamma/2\rfloor} \frac{\Gamma\bigl(\frac{n}{2}+\gamma-2j\bigr)}{\Gamma\bigl(\frac{n}{2}-\gamma+2j\bigr)}c_{n,\gamma}\Vol(S^n)^{\frac{2\gamma-4j}{n}}\lV f^{(2j)}\rV_{\frac{2n}{n-2\gamma+4j}}^2 \\ + \sum_{j=0}^{\lfloor\gamma\rfloor-\lfloor\gamma/2\rfloor-1} \frac{\Gamma\bigl(\frac{n}{2}+\lfloor\gamma\rfloor-[\gamma]-2j\bigr)}{\Gamma\bigl(\frac{n}{2}-\lfloor\gamma\rfloor+[\gamma]+2j\bigr)}d_{n,\gamma}\Vol(S^n)^{\frac{2\lfloor\gamma\rfloor-2[\gamma]-4j}{n}} \lV\phi^{(2j)}\rV_{\frac{2n}{n-2\lfloor\gamma\rfloor+2[\gamma]+4j}}^2
 \end{multline*}
 for all $U\in\mC^{2\gamma}\cap W^{\lfloor\gamma\rfloor+1,2}(\bR_+^{n+1},y^{1-2[\gamma]})$, where $f^{(2j)}:=B_{2j}^{2\gamma}(U)$, $0\leq j\leq\lfloor\gamma/2\rfloor$, and $\phi^{(2j)}:=B_{2[\gamma]+2j}^{2\gamma}(U)$, $0\leq j\leq\lfloor\gamma\rfloor-\lfloor\gamma/2\rfloor-1$, and the constants $c_{n,\gamma}$ and $d_{n,\gamma}$ are as in Theorem~\ref{thm:cs}.  Moreover, equality holds if and only if $U$ is the unique solution of~\eqref{eqn:bvp} and there are constants $a_j,b_\ell\in\bR$ and $\varepsilon_j,\epsilon_\ell\in(0,\infty)$ and points $\xi_j,\zeta_\ell\in\bR^n$ such that
 \begin{align*}
  f^{(2j)}(x) & = a_j\left(\varepsilon_j+\lv x-\xi_j\rv^2\right)^{-\frac{n-2\gamma+4j}{2}}, && 0 \leq j \leq \lfloor\gamma/2\rfloor, \\
  \phi^{(2\ell)}(x) & = b_j\left(\epsilon_\ell+\lv x-\zeta_\ell\rv^2\right)^{-\frac{n-2\lfloor\gamma\rfloor+2[\gamma]+4j}{2}}, && 0\leq \ell \leq \lfloor\gamma\rfloor - \lfloor\gamma/2\rfloor - 1
 \end{align*}
 for all $x\in\bR^n$.
\end{thm}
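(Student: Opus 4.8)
The plan is to derive the inequality by feeding the energy trace inequality of Corollary~\ref{cor:energy-trace} into the sharp fractional Sobolev inequalities of Lieb, Beckner, and Branson~\cite{Beckner1993,Branson1995,Lieb1983}, term by term, and then to propagate the two equality characterizations through the resulting chain.

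First I would record the sharp fractional Sobolev inequality in the form needed: for every $\sigma\in(0,n/2)$ and every $g\in C^\infty(\bR^n)\cap H^\sigma(\bR^n)$,
\[ \oint_{\bR^n} g\,(-\oDelta)^\sigma g\,dx \geq \frac{\Gamma\bigl(\frac{n}{2}+\sigma\bigr)}{\Gamma\bigl(\frac{n}{2}-\sigma\bigr)}\Vol(S^n)^{\frac{2\sigma}{n}}\lV g\rV_{\frac{2n}{n-2\sigma}}^2, \]
with equality precisely for $g(x)=a\bigl(\varepsilon+\lv x-\xi\rv^2\bigr)^{-\frac{n-2\sigma}{2}}$, $a\in\bR$, $\varepsilon\in(0,\infty)$, $\xi\in\bR^n$. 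I would then verify that this applies to each summand on the right of Corollary~\ref{cor:energy-trace}. The exponents are admissible: for $0\le j\le\lfloor\gamma/2\rfloor$ one has $\gamma-2j\in(0,\gamma]\subset(0,n/2)$, where $\gamma<n/2$ is the hypothesis and positivity at $j=\lfloor\gamma/2\rfloor$ uses $\gamma\notin\bN$; and for $0\le j\le\lfloor\gamma\rfloor-\lfloor\gamma/2\rfloor-1$ one has $\lfloor\gamma\rfloor-[\gamma]-2j=2\lfloor\gamma\rfloor-\gamma-2j\in(0,\lfloor\gamma\rfloor)\subset(0,n/2)$, again using $\gamma\notin\bN$ for positivity. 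One also needs the coefficients $c_{\gamma,j}$, $d_{\gamma,j}$ of Theorem~\ref{thm:cs} to be strictly positive, so that scaling a sharp lower bound by them preserves the inequality; this reduces to one sign computation in each case. In the $\Gamma$-quotient defining $c_{\gamma,j}$ every factor is manifestly positive except $\Gamma(2j-\gamma)$, whose argument is a negative non-integer of absolute value $\gamma-2j$, so $\sgn\Gamma(2j-\gamma)=(-1)^{\lfloor\gamma\rfloor-2j+1}=(-1)^{\lfloor\gamma\rfloor+1}$, cancelling the prefactor $(-1)^{1+\lfloor\gamma\rfloor}$; likewise $\Gamma(2j-\lfloor\gamma\rfloor+[\gamma])$ in $d_{\gamma,j}$ has sign $(-1)^{\lfloor\gamma\rfloor}$ and cancels its prefactor. (Alternatively, positivity is forced a posteriori, since $\mE_{2\gamma}\ge 0$ and Theorem~\ref{thm:cs} identifies $\mE_{2\gamma}$ of a solution with a single nonzero boundary datum with $c_{\gamma,j}$, resp.\ $d_{\gamma,j}$, times a positive quadratic form, and the explicit formula rules out vanishing.)

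Granting this, I would multiply the fractional Sobolev inequality for $\sigma=\gamma-2j$ applied to $f^{(2j)}$ by $c_{\gamma,j}$, the one for $\sigma=\lfloor\gamma\rfloor-[\gamma]-2j$ applied to $\phi^{(2j)}$ by $d_{\gamma,j}$, sum, and combine with Corollary~\ref{cor:energy-trace}; substituting the values $2\sigma/n$ and $2n/(n-2\sigma)$ turns the right-hand side into exactly the one asserted. For the equality case, the chain $\mE_{2\gamma}(U)\ge[\text{Corollary RHS}]\ge[\text{stated RHS}]$ shows that equality in the statement forces equality in both links: equality in Corollary~\ref{cor:energy-trace} holds iff $U$ solves~\eqref{eqn:bvp}, and equality in the fractional Sobolev inequality for each term holds iff $f^{(2j)}$, resp.\ $\phi^{(2\ell)}$, has the displayed bubble form. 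Conversely, if $U$ solves~\eqref{eqn:bvp} with boundary data of that form, both equalities hold, which yields the biconditional.

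The main obstacle is not analytic but a matter of bookkeeping: confirming $c_{\gamma,j}>0$ and $d_{\gamma,j}>0$ for every admissible $j$ and that the exponents never degenerate — both hinging on $\gamma$ being a non-integer — and then matching the $\Gamma$-factors, the $\Vol(S^n)$-powers, and the Lebesgue exponents produced by the fractional Sobolev inequality against those recorded in the statement.
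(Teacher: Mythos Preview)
Your proposal is correct and follows essentially the same approach as the paper's own proof: quote the sharp fractional Sobolev inequality from~\cite{Beckner1993,Branson1995,Lieb1983} and combine it termwise with Corollary~\ref{cor:energy-trace}. You supply more detail than the paper does---in particular the admissibility check on the exponents $\gamma-2j$ and $\lfloor\gamma\rfloor-[\gamma]-2j$ and the sign verification for $c_{\gamma,j}$, $d_{\gamma,j}$---but the argument is the same.
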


\begin{proof}
 Recall (see~\cite{Beckner1993,Branson1995,Lieb1983}) that if $n>2\gamma$, then
 \[ \oint_{\bR^n} f\,(-\oDelta)^\gamma f\,dx \geq \frac{\Gamma\bigl(\frac{n+2\gamma}{2}\bigr)}{\Gamma\bigl(\frac{n-2\gamma}{2}\bigr)}\Vol(S^n)^{\frac{2\gamma}{n}} \lV f\rV_{L^{\frac{2n}{n-2\gamma}}}^2 \]
 for all $f\in H^\gamma(\bR^n)$, with equality if and only if there is are constants $a\in\bR$ and $\varepsilon\in(0,\infty)$ and a point $\xi\in\bR^n$ such that
 \[ f(x) = a\left(\varepsilon+\lv x-\xi\rv^2\right)^{-\frac{n-2\gamma}{2}} . \]
 Combining this with Corollary~\ref{cor:energy-trace} yields the desired conclusion.
\end{proof}

By applying the sharp Onofri inequality~\cite{Beckner1993}, we can also use Corollary~\ref{cor:energy-trace} to prove Theorem~\ref{thm:intro-lebedev-milin}.  This result gives a sharp Lebedev--Milin inequality for the embedding
\[ \mC_+^{2k+1} \cap W^{k+1,2}(\bR_+^{2k+2}) \hookrightarrow e^{L}(\bR^{2k+1}) \]
for any $k\in\bN_0$.  The general sharp inequality for $W^{k+1,2}(\bR_+^{2k+2})$ involves adding extra $L^p$-norms corresponding to $f^{(2j)}$, $1\leq j\leq\lfloor k/2\rfloor$, and $\phi^{(2j)}$, $0\leq j\leq k - \lfloor k/2\rfloor-1$ as in Theorem~\ref{thm:sobolev-trace}.

\begin{proof}[Proof of Theorem~\ref{thm:intro-lebedev-milin}]
 Recall (see~\cite{Beckner1993}) that
 \[ \oint_{\bR^n} f(-\oDelta)^{n/2}f\,dx \geq 2(n!)\Vol(S^n)\log\oint_{\bR^n} e^{f-\of}\,d\mu \]
 with equality if and only if there are constants $a\in\bR$ and $\varepsilon\in(0,\infty)$ and a point $\xi\in\bR^n$ such that
 \[ f(x) = a - \ln \frac{\varepsilon + \lv x-\xi\rv^2}{1+\lv x\rv^2} . \]
 Combining this with Corollary~\ref{cor:yang} yields the desired conclusion.
\end{proof}

\subsection*{Acknowledgments} The author thanks the anonymous referees for their helpful suggestions which clarified the exposition.

\bibliographystyle{abbrv}
\bibliography{bib}
\end{document}